\def\ge{\geqslant}
\def\le{\leqslant}
\def\a{\alpha}
\def\b{\beta}
\def\g{\gamma}
\def\G{\Gamma}
\def\d{\delta}
\def\D{\Delta}
\def\L{\Lambda}
\def\e{\epsilon}
\def\o{\omega}
\def\ph{\phi}
\def\t{\tau}
\def\th{\theta}
\def\k{\kappa}
\def\l{\lambda}
\def\z{\zeta}
\def\i{^{-1}}
\def\<{\langle}
\def\>{\rangle}
\newcommand{\BA}{\ensuremath{\mathbb {A}}\xspace}
\newcommand{\BF}{\ensuremath{\mathbb {F}}\xspace}
\newcommand{{\BG}}{\ensuremath{\mathbb {G}}\xspace}
\newcommand{{\BK}}{\ensuremath{\mathbb {K}}\xspace}
\newcommand{\BQ}{\ensuremath{\mathbb {Q}}\xspace}
\newcommand{\BR}{\ensuremath{\mathbb {R}}\xspace}
\newcommand{\BW}{\ensuremath{\mathbb {W}}\xspace}
\newcommand{\BZ}{\ensuremath{\mathbb {Z}}\xspace}
\newcommand{\CA}{\ensuremath{\mathcal {A}}\xspace}
\newcommand{\CB}{\ensuremath{\mathcal {B}}\xspace}
\newcommand{\CD}{\ensuremath{\mathcal {D}}\xspace}
\newcommand{\CG}{\ensuremath{\mathcal {G}}\xspace}
\newcommand{\CH}{\ensuremath{\mathcal {H}}\xspace}
\newcommand{\CI}{\ensuremath{\mathcal {I}}\xspace}
\newcommand{\CL}{\ensuremath{\mathcal {L}}\xspace}
\newcommand{\CO}{\ensuremath{\mathcal {O}}\xspace}
\newcommand{\CR}{\ensuremath{\mathcal {R}}\xspace}
\newcommand{\CZ}{\ensuremath{\mathcal {Z}}\xspace}
\newcommand{\Ad}{{\mathrm{Ad}}}
\newcommand{\ad}{{\mathrm{ad}}}
\newcommand{\GL}{\mathrm{GL}}
\newcommand{\id}{\ensuremath{\mathrm{id}}\xspace}
\let\Im\relax
\DeclareMathOperator{\Im}{Im}
\newcommand{\red}{\ensuremath{\mathrm{red}}\xspace}
\DeclareMathOperator{\tr}{tr}
\newcommand{\ov}{\overline}
\def\brk{{\breve k}}
\def\dw{{\dot w}}
\def\COk{{\CO_{\brk}}}
\def\pr{{\rm pr}}
\def\ind{{\rm ind}}
\def\der{{\rm der}}
\def\stab{{\rm Stab}}
\def\ov{\overline}
\def\tx{{\textsl x}}
\def\sc{{\rm sc}}
\def\sfL{\textsf{L}}
\def\sfT{\textsf{T}}
\def\sfP{\textsf{P}}
\def\sfN{\textsf{N}}
\def\sfB{\textsf{B}}
\newtheorem{theorem}{Theorem}
\newtheorem{proposition}[theorem]{Proposition}
\newtheorem{lemma}[theorem]{Lemma}
\newtheorem{corollary}[theorem]{Corollary}
\theoremstyle{definition}
\newtheorem{remark}[theorem]{Remark}
\numberwithin{equation}{section}
\numberwithin{theorem}{section}
\renewcommand{\to}{%
   \ifbool{@display}{\longrightarrow}{\rightarrow}%
   }
\let\shortmapsto\mapsto
\renewcommand{\mapsto}{%
   \ifbool{@display}{\longmapsto}{\shortmapsto}%
   }
\newlength{\olen}
\newlength{\ulen}
\newlength{\xlen}
\newcommand{\xra}[2][]{%
   \ifbool{@display}%
      {\settowidth{\olen}{$\overset{#2}{\longrightarrow}$}%
       \settowidth{\ulen}{$\underset{#1}{\longrightarrow}$}%
       \settowidth{\xlen}{$\xrightarrow[#1]{#2}$}%
       \ifdimgreater{\olen}{\xlen}%
          {\underset{#1}{\overset{#2}{\longrightarrow}}}%
          {\ifdimgreater{\ulen}{\xlen}%
             {\underset{#1}{\overset{#2}{\longrightarrow}}}
             {\xrightarrow[#1]{#2}}}}%
      {\xrightarrow[#1]{#2}}
   }
\newcommand{\xyra}[2][]{%
   \settowidth{\xlen}{$\xrightarrow[#1]{#2}$}%
   \ifbool{@display}%
      {\settowidth{\olen}{$\overset{#2}{\longrightarrow}$}%
       \settowidth{\ulen}{$\underset{#1}{\longrightarrow}$}%
       \ifdimgreater{\olen}{\xlen}%
          {\mathrel{\xymatrix@M=.12ex@C=3.2ex{\ar[r]^-{#2}_-{#1} &}}}%
          {\ifdimgreater{\ulen}{\xlen}%
             {\mathrel{\xymatrix@M=.12ex@C=3.2ex{\ar[r]^-{#2}_-{#1} &}}}
             {\mathrel{\xymatrix@M=.12ex@C=\the\xlen{\ar[r]^-{#2}_-{#1} &}}}}}%
      {\mathrel{\xymatrix@M=.12ex@C=\the\xlen{\ar[r]^-{#2}_-{#1} &}}}%
   }
\newcommand{\xla}[2][]{%
   \ifbool{@display}%
      {\settowidth{\olen}{$\overset{#2}{\longleftarrow}$}%
       \settowidth{\ulen}{$\underset{#1}{\longleftarrow}$}%
       \settowidth{\xlen}{$\xleftarrow[#1]{#2}$}%
       \ifdimgreater{\olen}{\xlen}%
          {\underset{#1}{\overset{#2}{\longleftarrow}}}%
          {\ifdimgreater{\ulen}{\xlen}%
             {\underset{#1}{\overset{#2}{\longleftarrow}}}
             {\xleftarrow[#1]{#2}}}}%
      {\xleftarrow[#1]{#2}}
   }
\newcommand{\isoarrow}{%
   \ifbool{@display}{\overset{\sim}{\longrightarrow}}{\xrightarrow\sim}%
   }
\newcommand{\colim@}[2]{%
  \vtop{\m@th\ialign{##\cr
    \hfil$#1\operator@font lim$\hfil\cr
    \noalign{\nointerlineskip\kern1.5\ex@}#2\cr
    \noalign{\nointerlineskip\kern-\ex@}\cr}}%
}
\newcommand{\colim}{%
  \mathop{\mathpalette\colim@{\rightarrowfill@\textstyle}}\nmlimits@
}
\newcommand{\prolim@}[2]{%
  \vtop{\m@th\ialign{##\cr
    \hfil$#1\operator@font lim$\hfil\cr
    \noalign{\nointerlineskip\kern1.5\ex@}#2\cr
    \noalign{\nointerlineskip\kern-\ex@}\cr}}%
}
\newcommand{\prolim}{%
  \mathop{\mathpalette\colim@{\leftarrowfill@\textstyle}}\nmlimits@
}
\begin{document}

\title{Decomposition of higher Deligne-Lusztig representations}
\author{Sian Nie}
\address{Academy of Mathematics and Systems Science, Chinese Academy of Sciences, Beijing 100190, China}

\address{ School of Mathematical Sciences, University of Chinese Academy of Sciences, Chinese Academy of Sciences, Beijing 100049, China}
\email{niesian@amss.ac.cn}

%\keywords{Higher Deligne-Lusztig representation; Supercuspidal representation}

\begin{abstract}
Higher Deligne-Lusztig representations are virtual smooth representations of parahoric subgroups in a $p$-adic group. They are natural analogs of classical Deligne-Lusztig representations of reductive groups over finite fields. The most interesting higher Deligne-Lusztig representations are those attached to elliptic maximal tori, whose compact inductions are expected to realize supercuspidal representations of $p$-adic groups. Under a mild condition on $p$, in this paper we establish an explicit decomposition of these higher Deligne-Lusztig representations into irreducible summands. Surprisingly, all the irreducible summands are built in the same way as those in Yu's construction of irreducible supercuspidal representations, the only difference being that the Weil-Heisenberg representations in Yu's construction are replaced by their geometric analogs. As an application, we show that each irreducible supercuspidal representation of a $p$-adic group, attached to an unramified cuspidal datum, is a direct summand of the compact induction of a suitable higher Deligne-Lusztig representation, whenever the cardinality of the residue field of the $p$-adic field is not too small.
\end{abstract}

\maketitle

\section{Introduction} \label{sec-intro}

\subsection{Background and motivations} In the seminal work \cite{DL}, Deligne and Lusztig introduced a geometric way to construct representations of finite reductive groups, using the cohomology of so-called Deligne-Lusztig varieties. The (virtual) representations arisen this way are called Deligne-Lusztig representations, which play a central role in building up a powerful and elegant representation theory for finite reductive groups, known as Deligne-Lusztig theory.

Given the great success in the finite setting, in \cite{L79} Lusztig introduced natural extensions of the above constructions in the $p$-adic setting, which are referred to as higher Deligne-Lusztig varieties/representations. Since then, it has been a long-standing program to study these higher level analogs. The motivations are twofold. On one hand, higher Deligne-Lusztig varieties are of independent interest which admit very nice cohomological and arithmetic properties. On the other hand, the associated higher Deligne-Lusztig representations can be used to realize irreducible supercuspidal representations of $p$-adic groups, and has found many important applications in the local Langlands correspondence. We refer to \cite{L04}, \cite{Sta09}, \cite{B}, \cite{BW}, \cite{L04}, \cite{CS17}, \cite{Ch20}, \cite{CI21c}, \cite{CI21b}, \cite{CI21}, \cite{CI23}, \cite{CO}, \cite{I23a}, \cite{I23b},  \cite{DI}, \cite{CS23}, \cite{IN24a}, \cite{INT} for recent progress.

This paper is motivated by a question of Lusztig on the mysterious relation between higher Deligne-Lusztig representations and irreducible supercuspidal representations.
To describe it, we introduce some notation.

Let $k$ be a non-archimedean local field with a finite residue field $\BF_q$ of characteristic $p$ and of cardinality $q$. Denote by $\brk$ the completion of a maximal unramified extension of $k$. Let $F$ be the Frobenius automorphism of $\brk$ over $k$.

Let $G$ be a $k$-rational reductive group, and let $\tx$ be a point in its Bruhat-Tits building over $k$. Let $T \subseteq G$ be a $k$-rational and $\brk$-splitting maximal torus such that $\tx$ belongs to its apartment over $\brk$. Let $U$ be the unipotent radical of a $\brk$-rational Borel subgroup which contains $T$. Let $G_\tx$ and $T_\tx$ denote the corresponding  (connected) parahoric subgroups of $G = G(\brk)$ and $T = T(\brk)$ respectively.

Following \cite{L04} and \cite{CI21}, for any $r \in \BZ_{\ge 0}$ one can associate an $\ov\BF_q$-variety \[X_{T, U, r} = X_{T, U, \tx, r},\] which is called a higher Deligne-Lusztig variety. The $F$-fixed point group $G_\tx^F \times T_\tx^F$ acts on $X_{T, U, r}$ by the left/right multiplication, and hence acts on its $\ell$-adic cohomology groups $H_c^i(X_{T, U, r}, \ov \BQ_\ell)$, where $\ell \neq p$ is a different prime number. Thus, for any character $\phi: T^F \to \ov \BQ_\ell^\times$ of depth $r \ge 0$, the alternating sum of the corresponding isotropic subspaces \[R_{T, U, r}^G(\phi) := \sum_i (-1)^i H_c^i(X_{T, U, r}, \ov \BQ_\ell)[\phi |_{T_\tx^F}]\] gives a virtual representation of $G_\tx^F$, called a higher Deligne-Lusztig representation. When $r = 0$, $R_{T, U, r}^G(\phi)$ is the classical Deligne-Lusztig representation constructed in \cite{DL}.

The most basic and interesting higher Deligne-Lusztig representations $R_{T, U, r}^G(\phi)$ are those attached to elliptic maximal tori $T$, which we call elliptic higher Deligne-Lusztig representations. On one hand, any higher Deligne-Lusztig representation can be reduced to the study of elliptic ones. On the other hand, only elliptic higher Deligne-Lusztig representations could give rise to supercuspidal representations of the $p$-adic group $G^F = G(k)$. On the other hand, in \cite{Yu} Yu constructed a family of irreducible representations of (disconnected) parahoric subgroups in a purely algebraic way, whose compact inductions are irreducible  suprecuspical representations of $G^F$. A natural question, raised by Lusztig \cite{L79}, \cite{L04}, is that how to compare Yu's representations with elliptic higher Deligne-Lusztig representations.

\subsection{Irreducible decomposition} The main purpose of this paper is to give an explicit irreducible decomposition for elliptic higher Deligne-Lusztig representations, which presents a striking resemblance between their irreducible summands and Yu's representations. To this end, we employ a strategy inspired from the work of Chen-Stasinski \cite{CS23}. The key is to introduce a new $\ov\BF_q$-variety of Deligne-Lusztig type \[Z_{\phi, U, r},\] which also admits a natural action by $G_\tx^F \times T_\tx^F$. Similarly, using Deligne-Lusztig induction above one obtains another (virtual) $G_\tx^F$-module \[\CR_{T, U, r}^G(\phi) := \sum_i (-1)^i H_c^i(Z_{\phi, U, r}, \ov \BQ_\ell)[\phi|_{T_\tx^F}].\] The first step is to show that
\begin{proposition} \label{main-step}
    If $p$ is not a bad prime for $G$ and $p \nmid |\pi_1(G_\der)|$, then $R_{T, U, r}^G(\phi) = \CR_{T, U, r}^G(\phi)$ as virtual $G_\tx^F$-modules.
\end{proposition}
The proof of Proposition \ref{main-step} is to show the equalities \[\<R_{T, U, r}^G(\phi), R_{T, U, r}^G(\phi)\>_{G_r^F} = \<\CR_{T, U, r}^G(\phi), R_{T, U, r}^G(\phi)\>_{G_r^F} = \<\CR_{T, U, r}^G(\phi), \CR_{T, U, r}^G(\phi)\>_{G_r^F}.\] The first inner product is computed by Chan \cite{Ch24}. By extending methods of Lusztig \cite{L04}, Chen-Stasinski \cite{CS17} and Yu \cite{Yu}, we compute the last two and it turns out that all the three inner products coincide. This concludes the desired equality $R_{T, U, r}^G(\phi) = \CR_{T, U, r}^G(\phi)$.

\begin{remark} In a follow-up work \cite{IN24b}, we will give a cohomology-theoretic proof of the equality $R_{T, U, r}^G(\phi) = \CR_{T, U, r}^G(\phi)$, without computing inner products.\end{remark}

\smallskip

Given Proposition \ref{main-step}, the problem is reduced to the study of $\CR_{T, U, r}^G(\phi)$. Compared with $X_{T, U, r}$, the variety $Z_{\phi, U, r}$ has much simpler structure. As a result we are able to give an explicit decomposition of $\CR_{T, U, r}^G(\phi)$. To describe it, we invoke a result by Howe \cite{Howe} and Kaletha \cite{Kal} on Howe factorizations of smooth characters $\phi: T^F \to \ov \BQ_\ell^\times$. It says that if $p$ is as in Proposition \ref{main-step} then there is a generic datum (see \S\ref{subsec:Howe}) \[(G^i, \phi_i, r_i)_{0 \le i \le d},\] where $T \subseteq G^0 \subsetneq \cdots \subsetneq G^d = G$ are $k$-rational Levi subgroups of $G$ and $\phi_i: (G^i)^F \to \ov \BQ_\ell^\times$ are characters satisfying certain genericity conditions relative to the integer sequence $0 \le r_0 < r_1 < \cdots < r_{d-1} \le r_d$ such that \[\phi_{-1} := \phi\i \prod_{i=0}^d \phi |_{T^F}\] is a character (of $T^F$) of depth $0$. Following Yu's construction, we consider the following subgroups \begin{align*} K_\phi = K_{\phi, \tx} &= (G^0)_\tx (G^1)_\tx^{r_0/2} \cdots (G^d)_\tx^{r_d/2} \subseteq G_\tx \\ H_\phi = H_{\phi, \tx} &= (G^0)_\tx^{0+} (G^1)_\tx^{r_0/2} \cdots (G^d)_\tx^{r_d/2} \subseteq K_\phi,\end{align*} where $(G^{i+1})_\tx^s \subseteq (G^{i+1})_\tx$ denotes the $s$th Moy-Prasad subgroup attached to $\tx$ for $s \in \tilde \BR$. We show that each isotropic space \[H_c^i(Z_{\phi, U, r} \cap H_{\phi, r}, \ov \BQ_\ell)[\phi|_{(T_\tx^{0+})^F}]\] is a natural $K_\phi^F$-module, where $H_{\phi, r} \subseteq G_r$ is the natural image of $H_\phi$ and $T_\tx^{0+}$ is the pro-unipotent radical of $T_\tx$. Define  \[\k_\phi = \k_{\phi, U} := \sum_i (-1)^i H_c^i(Z_{\phi, U, r} \cap H_{\phi, r}, \ov \BQ_\ell)[\phi|_{(T_\tx^{0+})^F}].\] The next step is the following result.
\begin{proposition} \label{irr}
    The virtual $K_\phi^F$-module $\k_\phi$ is irreducible.
\end{proposition}

Note that the restriction $\k_\phi|_{H_{\phi, r}^F}$ is still obtained from the cohomological Deligne-Lusztig induction. Hence we can use the K\"{u}nneth formula (as in \cite[\S 6.6]{DL}) to show that \[\<\k_\phi|_{H_{\phi, r}^F}, \k_\phi|_{H_{\phi, r}^F}\>_{H_{\phi, r}^F} = 1,\] that is, $\k_\phi|_{H_\phi^F}$ is irreducible. However, the virtual $K_\phi^F$-module $\k_\phi$ is not constructed by Deligne-Lusztig induction. So it is much more challenging to show the irreducibility of $\k_\phi$. To achieve this, we prove the following remarkable concentration-at-one-degree property.
\begin{proposition} [Theorem \ref{one-degree}] \label{single-degree}
    There is a unique non-negative integer $n_\phi$ such that $H_c^i(Z_{\phi, U, r} \cap H_{\phi, r}, \ov \BQ_\ell)[\phi|_{(T_\tx^{0+})^F}] \neq \{0\}$ if and only if $i = n_\phi$.
\end{proposition}

\begin{remark}
    When the pair $(T, U)$ is of Coxeter type, an analogous concentration property also holds for certain closed subsets of higher Deligne-Lusztig varieties $X_{T, U, r}$, see \cite{B}, \cite{BW}, \cite{Ch20}, \cite{CI21b}, \cite{CI23} and \cite{IN24a}.
\end{remark}

Proposition \ref{single-degree} is proved by extending methods of \cite{BW} and \cite{IN24a}.

\smallskip

Having the above preparations, we now state the main theorem.
\begin{theorem} [Theorem \ref{decomposition}] \label{main}
    Let $p$ be as in Proposition \ref{main-step} and let notation be as above. Suppose $T$ is elliptic. Then
    \[R_{T, U, r}^G(\phi) = \CR_{T, U, r}^G(\phi) = \ind_{K_\phi^F}^{G_\tx^F} \k_\phi \otimes R_{T, U, 0}^{G^0}(\phi_{-1}) = \sum_\rho m_\rho  \ind_{K_\phi^F}^{G_\tx^F} \k_\phi \otimes \rho,\] where $R_{T, U, 0}^{G^0}(\phi_{-1})$ is a classical Deligne-Lusztig representation for the reductive quotient of $(G_\tx^0)^F$ (viewed as a $K_\phi^F$-module by inflation), and $\rho$ ranges over its irreducible summands with multiplicitiy $m_\rho$.

    Moreover, the summands $\ind_{K_\phi^F}^{G_\tx^F} \k_\phi \otimes \rho$ are pairwise non-isomorphic irreducible representations of $G_\tx^F$.
\end{theorem}

\begin{remark}
    Pioneering results were obtained by Chan-Oi \cite{CO} when $\phi$ is toral and by Chen-Stasinski \cite{CS17}, \cite{CS23} when $\phi$ is regular and $\tx$ is hyperspecial, using different methods. In both results,  $\pm R_{T, U, r}^G(\phi)$ is an irreducible $G_\tx^F$-module formulated in terms of Yu's representations \cite{Yu} and G\'{e}rardin's representations \cite{Ger}, respectively.
\end{remark}

\begin{remark} \label{rmk-compare}
    Note that in Theorem \ref{main} the irreducible $K_\phi^F$-modules \[\pm \k_\phi \otimes \rho\] are constructed in the same spirit of Yu's construction. The only different is that the Weil-Heisenberg representation $\k(\phi)$ used in Yu's construction is replaced by the representation $\k_\phi$ arising from geometry. In fact, the group $H_\tx^F$ has a finite quotient isomorphic to a Heisenberg $p$-group. Then the restrictions $\pm\k_\phi |_{H_\tx^F} \cong \k(\phi) |_{H_\tx^F}$ are inflated from the same Heisenberg representation determined by $\phi|_{(T_\tx^{0+})^F}$.
\end{remark}

%To prove the main result, we employ a strategy inspired from the work of Chen-Stasinski \cite{CS23}. The key is to introduce a new $\ov\BF_q$-variety \[Z_{\phi, U, r},\] which also admits a natural action by $G_\tx^F \times T_\tx^F$. Hence one obtains another (virtual) $G_\tx^F$-module \[\CR_{T, U, r}^G(\phi) := \sum_i (-1)^i H_c^i(Z_{\phi, U, r}, \ov \BQ_\ell)[\phi|_{T_\tx^F}].\] Compared with $X_{T, U, r}$, the new variety $Z_{\phi, U, r}$ has simpler structure and it follows that $\CR_{T, U, r}^G(\phi)$ is a cohomological realization of right hand side of Theorem \ref{main}. Thus, it remains to show $\CR_{T, U, r}^G(\phi) = R_{T, U, r}^G(\phi)$, that is, \[\<R_{T, U, r}^G(\phi), R_{T, U, r}^G(\phi)\>_{G_r^F} = \<\CR_{T, U, r}^G(\phi), R_{T, U, r}^G(\phi)\>_{G_r^F} = \<\CR_{T, U, r}^G(\phi), \CR_{T, U, r}^G(\phi)\>_{G_r^F}.\] The first inner product is computed by Chan \cite{Ch24}. We compute the last two and it turns out that all the three inner products coincide. Hence the first main result follows.

\subsection{Application} Now we discuss an application of Theorem \ref{main} on supercuspidal representations of $p$-adic groups. In \cite{Yu}, Yu introduced the notion of cuspidal $G$-data and to each such datum $\Xi$ assigned an irreducible supercuspidal representation $\pi_\Xi$ of $G^F$. Thanks to work by Kim \cite{Kim} and Fintzen \cite{F21b}, it is known that when $p$ does not divide the order of the absolute Weyl group of $G$, all the irreducible supercuspidal representations of $G^F$ are exhausted by Yu's representations $\pi_\Xi$.

Recall that a cuspidal $G$-datum $\Xi$ contains a sequence of tamely ramified Levi subgroups $G^0 \subsetneq \cdots \subsetneq G^d = G$ and a point $\tx$ in the Bruhat-Tits building of $G^0$. We say $\Xi$ is unramified if $G^0$ (and hence all the Levi subgroups $G^i$) splits over $\brk$.

Our second main result is the following.
\begin{theorem} \label{consequence}
    Let $p$ be as in Theorem \ref{main} and assume that $q$ be sufficiently large. Let $\Xi$ be an unramified cuspidal $G$-datum as above. Then $\pi_\Xi$ is a direct summand of the compact induction \[\text{c-}\ind_{Z^F G_\tx^F}^{G^F} R_{T, U, r}^G(\phi),\] where $Z$ is the center of $G$ and $R_{T, U, r}^G(\phi)$ is some higher Deligne-Lusztig representation as in Theorem \ref{main}, extended to a $Z^F G_\tx^F$-module on which $Z^F$ acts via $\phi$.
\end{theorem}
We refer to Theorem \ref{sum} for the precise largeness condition on $q$. The proof is based on combining Theorem \ref{main} with the methods of Kim \cite{Kim} and Fintzen \cite{F21a}.

\begin{remark}
    If $\Xi = (S, \th)$ is a toral cuspidal $G$-datum, with $S$ an unramified elliptic maximal torus and $\th: S^F \to \BQ_\ell^\times$ a character of depth $\ge r$, it is proved by Chan and Oi \cite{CO} that \[\pi_\Xi \cong \text{c-}\ind_{Z^F G_\tx^F}^{G^F} R_{S, V, r}^G(\th \varepsilon[\th]),\] where $\tx$ is some/any $F$-fixed point in the apartment of $S$ over $\brk$, and $\varepsilon[\th]$ is certain quadratic character of $S^F$. For general $\Xi$, there is a lack of information on the construction of $R_{T, U, r}^G(\phi)$ in Corollary \ref{consequence}. The reason is that the relation between the Weil-Heisenberg representations $\k(\phi)$ and their geometric analogs $\k_\phi$ is unclear. Guided by the work \cite{CO} and \cite{FKS}, we expect that $\pm \k_\phi = \e|_{K_\phi^F} \k(\phi)$, where $\e$ is the quadratic character defined in \cite[Theorem 4.1.13]{FKS}.
\end{remark}

\subsection{Structure of the paper} The paper is organized as follows. In \S \ref{sec-R} we recall the inner product formula and the degeneracy property of higher Deligne-Lusztig representations due to Chan \cite{Ch24}, which will play an essential role in our computation. In \S \ref{sec-CR}, we introduce the variety $Z_{\phi, U, r}$ and the associated representation $\CR_{T, U, r}^G(\phi)$. In \S \ref{sec-product}, we compute the inner product between $\CR_{T, U, r}^G(\phi)$ and $R_{T, U, r}^G(\phi)$. This is achieved by extending methods from \cite{L04}, \cite{CS17} and \cite{Ch24}. In \S \ref{sec-algebraic}, we compute the self inner product of $\CR_{T, U, r}^G(\phi)$, which completes the proof of the equality $\CR_{T, U, r}^G(\phi) = R_{T, U, r}^G(\phi)$. \S \ref{sec: concentration} is devoted to the proof of Proposition \ref{single-degree}. In \S \ref{sec-decomp} we decompose $\CR_{T, U, r}^G(\phi)$ in to irreducible representations of Yu's type and finishes the proof of Theorem \ref{main}. In the last section, we prove Theorem \ref{consequence}.

\subsection*{Acknowledgement}
We would like to thank Zhe Chen for explaining ideas in his joint work \cite{CS17}, \cite{CS23} with Alexander Stasinski, which inspired the construction of $Z_{\phi, U, r}$ and the equality $R_{T, U, r}^G(\phi) = \CR_{T, U, r}^G(\phi)$. It is also clear from the context that the of Proposition \ref{main-step} depends heavily on results and methods by Chan \cite{Ch24}. We are also grateful to Alexander Ivanov for the collaboration on higher Deligne-Lusztig varieties and representations, which inspired the proof of Proposition \ref{single-degree}. Finally, we thank George Lusztig for helpful comments which improve the exposition of this paper significantly.

\subsection*{Conventions and notation}
Let $k$ be a non-archemedean field with a finite residue field $\BF_q$ of cardinality $q$ and of characteristic $p \neq 2$. Let $\brk$ be the completion of a maximal unramified extension of $k$. Denote by $\CO_k$ and $\COk$ the integer rings of $k$ and $\brk$ respectively. Fix a uniformizer $\varpi \in \CO_k$. Let $F$ be the Frobenius automorphism of $\brk$ over $k$.

Let $G$ be a connected $k$-rational reductive group splitting over $\brk$. We write $Z(G)$ for the center of $G$, $G_\der$ for the derived subgroup $G$ and $G_\sc$ for the simply connected covering of $G_\der$. Let $\CB(G, k)$ denote the (enlarged) Bruhat-Tits building of $G$ over $k$. By the Bruhat-Tits building theory, to each point $\tx \in \CB(G, k)$ one can associate a connected parahoric $\CO_k$-model $\CG_\tx$ of $G$, together with a filtration of Moy-Prasad subgroups $\CG_\tx^r$ for $r \in \widetilde\BR_{\ge 0}$. Here $\widetilde\BR = \BR \sqcup \{r+; r \in \BR\}$ with the usual order given by $s < s+ < r$ for any $s < r \in \BR$. For $s \le r \in \widetilde\BR_{\ge 0}$ we denote by $G_r^s$ the $\BF_q$-rational smooth affine group scheme, which represents the perfection of the functor \[R \mapsto \CG_\tx^s(\BW(R)) / \CG_\tx^{r+}(\BW(R)),\] where $R$ is a $\BF_q$-algebras, and $\BW(R)$ is the Witt ring of $R$ if ${\rm char}~k = 0$ and $\BW(R) = R[[\varpi]]$ otherwise. Let $H \subseteq G$ be a closed $\brk$-rational subgroup. We denote by $H_r^s \subseteq G_r^s$ the closed subgroup defined in \cite[\S 2.6]{CI21}. If $H$ is $k$-rational, the Frobenius $F$ acts on $H_r$ in a natural way. By abuse of notation, we will write $H = H(\brk)$, $G_\tx^s = \CG_\tx^s(\COk) \subseteq G$ and $H_r = H_r(\ov\BF_q)$. In particular, $G^F = G(k)$ and $G_r^F = G_r(\BF_q) = G_\tx^F / (G_\tx^{r+})^F$.

Let $T \subseteq G$ be a $k$-rational maximal torus splitting over $\brk$. We denote by $\CA(T, \brk)$ the apartment of $T$ inside the Bruhat-Tits building $\CB(G, \brk)$ of $G$ over $\brk$. Write $\Phi(G, T)$ for the root system of $T$ in $G$ over $\brk$.

All representations of groups in this paper have coefficients in $\ov\BQ_\ell$, where $\ell \neq p$ is a different prime number. Let $A$ be a group. For two subgroups $A_1, A_2 \subseteq A$ let $[A_1, A_2]$ denote the subgroup generated by the commutators $[a_1, a_2] := a_1 a_2 a_1\i a_2\i$ for all $a_1 \in A_1$ and $a_2 \in A_2$. For $h \in A$, $K$ a subgroup of $A$, and $\rho$ a representation $K$, we write ${}^h K = h K h\i$ and ${}^h \rho$ the representation of ${}^h K$ such that ${}^h \rho(x) = \rho(h\i x h)$ for $x \in {}^h K$. We say $h$ intertwines $\rho$ if $\hom_{K \cap {}^h K}(\rho, {}^h \rho)$ is non-trivial. Suppose the group $A$ acts on a set $Y$. We denote by $Y^A \subseteq Y$ the set of elements fixed by $A$, and by $\stab_A(y)$ the stabilizer of $y \in Y$ in $A$.

Let $X$ be a $\ov\BF_q$-variety. For $i \in \BZ$ we denote by $H_c^i(X, \ov\BQ_\ell)$ the $i$th $\ell$-adic cohomology space of $X$ with compact support. Suppose that $X$ admits an algebraic action by the product of two finite groups $A_1$ and $A_2$. Then $H_c^i(X, \ov\BQ_\ell)$ is a representation of $A_1 \times A_2$ by functoriality. For a character $\th$ of $A_2$, we write $H_c^i(X, \ov\BQ_\ell)[\chi] \subseteq H_c^i(X, \ov\BQ_\ell)$ for the $\th$-isotropic subspace, which is a representation of $A_1$ in the natural way. We write \[H_c^*(X, \ov\BQ_\ell)[\th] = \sum_{i \in \BZ} (-1)^i H_c^i(X, \ov\BQ_\ell)[\th],\] which is a virtual representation of $A_1$.

\section{Higher Deligne-Lusztig representations}\label{sec-R}
Let $G$ be a connected $k$-rational reductive group which splits over $\brk$. Throughout out the paper, we make the following assumption \[\tag{*} \text{$p$ is not a bad prime for $G$ and does not divide $|\pi_1(G_\der)|$.}\] Moreover, we fix a point $\tx \in \CB(G, k)$ except the last section.

%Fix a point $\tx$ in the apartment of $T$. Denote by $G_{\tx}$ the parahoric subgroup of $G(\brk)$. For $r \in \BR_{\ge 0}$ let $G_\tx^r$ be the $r$th Moy-Prasad subgroup of $G_\tx$, and define \[G_r =  G_{\tx, r} = G_\tx / G_\tx^{r+},\] where $G_\tx^{r+} = \cup_{r < r'} G_\tx^{r'}$. Note that $G_r$ is a smooth affine group over $\BF_q$. For any closed subgroup $H \subseteq G$, we identity $H$ with $H(\brk)$ and set \[H_r = H_{\tx, r} = (H \cap G_\tx) / (H \cap G_\tx^{r+}).\] We will write $H_r = H_r^s$ if $s = 0$. Note that $G_0 = G_{\tx, 0}$ is the reductive quotient of the parahoric subgroup $G_\tx$.

\subsection{The representations $R_{T, U, r}^G(\phi)$} Let $T \subseteq G$ be a $k$-rational and $\brk$-splitting maximal torus such that $\tx \in \CA(T, \brk)$. Let $B = T U \subseteq G$ be a Borel subgroup with $U$ the unipotent radical. Let $r \in \BR_{\ge 0}$. The associated higher/parahoric Deligne-Lusztig variety is defined by \[X_{T, U, r} = X_{G, T, U, r}  = \{g \in G_r; g\i F(g) \in F U_r\}.\] There is a natural action of $G_r^F \times T_r^F$ on $X_{T, U, r}$ given by $(g, t): x \mapsto g x t$.

Let $\phi: T^F \to \ov \BQ_\ell^\times$ be a smooth character. The depth $\phi$, denoted by $r_\phi$, is the least non-negative integer $s \in \BZ_{\ge 0}$ such that $\phi$ is trivial over $(T_\tx^{s+})^F$. Suppose that $r_\phi \le r$. Then $\phi$ can be viewed as a character of $T^F / (T_\tx^{r+})^F$. The attached higher Deligne-Lusztig representation is defined by \[R_{T, U, r}^G(\phi) = H_c^*(X_{T, U, r}, \ov \BQ_\ell)[\phi|_{T_r^F}],\] which is a virtual representation of $G_r^F$. Using the natural projections $G_\tx^F \to G_s^F \to G_r^F$ for $s \ge r$, we also view $H_c^i(X_{T, U, r}, \ov \BQ_\ell)[\phi|_{T_r^F}]$ and $R_{T, U, r}^G(\phi)$ as representations of $G_s^F$ and $G_\tx^F$.

\subsection{Properties} We recall several important properties on the representations $R_{T, U, r_\phi}^G(\phi)$ established by Chan \cite{Ch24}. These results will play an essential role in the paper. The first result implies that the $G_\tx^F$-module $R_{T, U, r}^G(\phi)$ is in independent of the choice $r \ge r_\phi$ when $T$ is elliptic.
\begin{theorem} \cite[Theorem 5.2]{Ch24} \label{degenracy}
    Assume $T$ is elliptic. Then there is an integer $m$ such that for any $i \in \BZ$ we have \[H_c^i(X_{T, U, r}, \ov \BQ_\ell)[\phi|_{T_r^F}] \cong H_c^{i + 2m} (X_{T, U, r_\phi}, \ov \BQ_\ell)[\phi|_{T_{r_\phi}^F}]\] as $G_r^F$-modules. In particular, $R_{T, U, r}^G(\phi) \cong R_{T, U, r_\phi}^G(\phi)$ as $G_r^F$-modules.
\end{theorem}

The next result is a projection formula for $R_{T, U, r}^G(\phi)$.
\begin{proposition} \cite[Proposition 3.7]{Ch24} \label{ass-R}
    Let $\th$ be a  character of $G^F$ which is trivial over $G_\sc^F$ and $(G_\tx^{r+})^F$. Then $R_{T, U, r}^G(\phi) \otimes \th|_{G_r^F} \cong R_{T, U, r}^G(\phi \otimes \th|_{T_r^F})$ as $G_r^F$-modules.
\end{proposition}

Let $W_{G_r}(T_r) = (N_T)_r / T_r$, where $N_T$ denotes the normalizer of $T$ in $G$. Then $W_{G_r}(T_r)^F$ permutes characters of $T_r^F$ in a natural way. The last result is a inner product formula for $R_{T,U,r}^G(\phi)$ with $T$ elliptic.
\begin{theorem} \cite[Theorem 6.2]{Ch24} \label{R-R}
    Assume that $T$ is elliptic. Then \[\<R_{T,U,r}^G(\phi), R_{T,U,r}^G(\phi)\>_{G_r^F} = |\stab_{W_{G_r}(T_r)^F}(\phi |_{(T_r)^F})|.\] Moreover, $R_{T,U,r}^G(\phi)$ is independent of the choice of $B = T U$ containing $T$.
\end{theorem}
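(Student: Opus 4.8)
The plan is to adapt the Deligne--Lusztig computation of inner products of induced representations \cite{DL} to the parahoric setting, keeping track of the extra contributions produced by the Moy--Prasad layers. I would first prove the more general \emph{mixed} formula
\[\<R_{T,U,r}^G(\phi),\, R_{T,U',r}^G(\phi')\>_{G_r^F} \;=\; \sharp\{\, w \in W_{G_r}(T_r)^F ;\ {}^w\phi' = \phi \,\}\]
for any two Borel subgroups $B = TU$, $B' = TU'$ over $\brk$ containing $T$ and any characters $\phi,\phi'$ of depth $\le r$; the self-intersection formula is the case $U'=U$, $\phi'=\phi$. The independence of $B$ then follows formally: writing $N = \sharp\{\, w\in W_{G_r}(T_r)^F;\ {}^w\phi=\phi\,\}$, the mixed formula gives $\<R_{T,U,r}^G(\phi)-R_{T,U',r}^G(\phi),\,R_{T,U,r}^G(\phi)-R_{T,U',r}^G(\phi)\>_{G_r^F} = N-N-N+N = 0$, and a virtual representation with vanishing self inner product is zero.

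Next, by a standard argument (cf.\ \cite[\S6]{DL}), $\<R_{T,U,r}^G(\phi),R_{T,U',r}^G(\phi')\>_{G_r^F}$ equals the multiplicity of $\phi\otimes(\phi')^{-1}$ in the virtual $T_r^F\times T_r^F$-module $\sum_i(-1)^iH_c^i\bigl(G_r^F\backslash(X_{U,r}\times X_{U',r})\bigr)$, the left $G_r^F$-action on each factor being free. Sending $(x,x')\mapsto x\i x'$ and using Lang's theorem for the connected group $G_r$, this quotient is isomorphic to
\[\Sigma \;=\; \{\, y\in G_r;\ F(y)\in (FU_r)\,y\,(FU'_r)\,\},\]
carrying the residual action $(t,t')\cdot y = t\i y t'$ of $T_r^F\times T_r^F$; so it remains to compute the $(\phi,(\phi')^{-1})$-isotypic Euler characteristic of $\Sigma$. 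I would stratify $\Sigma$ by the relative position $w$ of the image of $y$ in the reductive quotient $G_0$, obtaining $\Sigma=\bigsqcup_w\Sigma_w$; the isotypic Euler characteristics add over $w$, and a Lang-theorem argument shows $\Sigma_w=\emptyset$ unless $w$ lies in $W_{G_r}(T_r)^F \cong W_{G_0}(T_0)^F$ (the unipotent part of $G_r$ produces no new Weyl elements).

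Each $\Sigma_w$ then fibers over its depth-zero shadow $\Sigma_w^{(0)}\subseteq G_0$ — the locus cut out by the same recipe inside $G_0$ — with fibers built as iterated extensions of the graded Moy--Prasad vector groups $\fkg_{\tx,s}/\fkg_{\tx,s^+}$; the hypotheses that $p$ is not bad for $G$ and $p\nmid|\pi_1(G_\der)|$ enter here, ensuring well-behaved gradeds, Moy--Prasad isomorphisms and split Heisenberg layers. At depth zero the computation of \cite{DL} applies: with $\phi_0,\phi_0'$ the depth-zero parts, $T_0^F\times T_0^F$ acts on $H_c^*(\Sigma_w^{(0)})$ so that the $(\phi_0,(\phi_0')^{-1})$-part is one-dimensional in a single degree with sign $+1$ if ${}^w\phi_0'=\phi_0$ and vanishes otherwise. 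It remains to show that the $T_r^F$-twisted Euler characteristic of the fiber of $\Sigma_w\to\Sigma_w^{(0)}$ equals $0$ when the depth-$>0$ parts of $\phi$ and ${}^w\phi'$ disagree and equals $\pm1$, the Euler characteristic of an affine space, when they agree — and that the sign is $+1$.

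This last point is the only genuinely hard step, and where I expect the main obstacle to lie. It reduces to evaluating sums of additive characters against the Heisenberg-type bilinear forms coming from the brackets $[\fkg_{\tx,s},\fkg_{\tx,s'}]$, twisted by $\phi$; the vanishing is controlled by whether $\phi$ is \emph{generic} relative to $w$, a condition that for a general elliptic torus must be replaced by Chan's notion of a \emph{split-generic} pair. The clean way to organize the induction is to fix a Howe factorization of $\phi$ — available under our hypothesis on $p$ — and peel off the Moy--Prasad layers one depth at a time, reducing at each stage to a twisted Levi subgroup by a Clifford-theory argument; this is essentially the content of \cite{L04}, \cite{Sta09}, \cite{CI21} in the generic case and of \cite{Ch24} for elliptic $T$. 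Once each stratum is shown to contribute $1$ if ${}^w\phi=\phi$ and $0$ otherwise, summing over $w\in W_{G_r}(T_r)^F$ yields $\sharp\{\,w\in W_{G_r}(T_r)^F;\ {}^w\phi=\phi\,\}$, and with it both assertions of the theorem.
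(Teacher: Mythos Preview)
This theorem is not proved in the paper at all: it is quoted verbatim from Chan \cite[Theorem~6.2]{Ch24} and used as a black-box input (together with the degeneracy Theorem~\ref{degenracy}) to the later inner-product computations in \S\ref{sec-product}--\S\ref{sec-algebraic}. There is therefore no ``paper's own proof'' to compare your proposal against; the paper's contribution is the construction of $\CR_{T,U,r}^G(\phi)$ and the verification that its inner products match the number supplied by this theorem, not a new proof of the theorem itself.

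As for the sketch on its own terms: the overall architecture---reduce to the $(\phi,\phi'^{-1})$-isotypic Euler characteristic of $\Sigma$, stratify by Weyl element, and peel off Moy--Prasad layers---is indeed the shape of the argument in \cite{L04}, \cite{Sta09}, \cite{CI21}, \cite{Ch24}. But two points deserve caution. First, the stratification is not by the image in the reductive quotient $G_0$; in the parahoric setting one works with a cell decomposition of $G_r$ itself (as in \S\ref{sec-product} of the present paper), and the index set is $W_{G_r}(T_r)$ acting on Levi pieces, not merely $W_{G_0}(T_0)$. Second, and more seriously, the step you flag as ``the only genuinely hard step'' is precisely the entire content of \cite{Ch24}: Chan's argument does \emph{not} proceed via a Howe factorization (which would impose the hypothesis on $p$ that the present paper assumes but \cite{Ch24} does not), but rather via a direct geometric analysis of ``drops'' in the Moy--Prasad filtration together with a delicate induction. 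Your proposed route through Howe factorization and Clifford theory is closer in spirit to what the present paper does for $\CR_{T,U,r}^G(\phi)$ in \S\ref{sec-product}, and it is not clear it would yield Theorem~\ref{R-R} for $R_{T,U,r}^G(\phi)$ without already invoking something of comparable strength.
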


\begin{remark}
    In fact, Chan proved a much stronger version of the inner product formula. We refer to loc. cit. for the precise statement.

    If $\phi$ is generic, the inner product formula in Theorem \ref{R-R} is proved in \cite{L04}, \cite{Sta09} and \cite{CI21} without the elliptic assumption. If $T$ is of Coxeter type, it is proved by \cite{DI} and \cite{INT} when $q$ is not too small.
\end{remark}

%We will write $R_{T, r}^G(\phi) = R_{T, V, r}^G(\phi)$ for any Borel subgroup of the form $T V$ with $V$ the unipotent radical.

\section{A new class of representations} \label{sec-CR}

In this section, we introduce the main geometric objects $Z_{\phi, U, r}$ and their cohomological induced representations $\CR_{T, U, r}^G(\phi)$ attached to smooth characters $\phi$ of $T^F$. We will show that $\CR_{T, U, r}^G(\phi)$ behaves similarly as $R_{T, U, r}^G(\phi)$ introduced in \S \ref{sec-R}.

%Let $\tx$, $T$ and $B = TU$ be as in \S \ref{sec-R}. We denote by $\Phi(G, T)$ the root system of $T$ in $G$. For $\a \in \Phi(G, T)$ let $G^\a$ be the corresponding root subgroup, and let $T^\a \subseteq T$ be the (one-dimensional) maximal torus of the subgroup generated by $G^\a$ and $G^{-\a}$.

\subsection{Generic datum} \label{subsec:generic}
%Let $ M \supseteq T$ be a Levi subgroup of $G$. Let $\th$ be a character of $M^F$ of depth $r_\th \ge 0$. We say $\th$ is $(M, G)$-generic if for any $\a \in \Phi(G, T) \setminus \Phi(M, T)$ and $n \in \BZ_{\ge 1}$ such that $F^n(T^\a) = T^\a$, the restriction of $\th \circ N_F^{F^n}$ to $((T^\a)_{r_\th}^{r_\th})^{F^n}$ is nontrivial. Here $N_F^{F^n}: T^{F^n} \to T^F$ is give by $t \mapsto t F(t) \cdots F^{n-1}(t)$. By \cite[Lemma 3.6.8]{Kal}, this definition of genericity coincides with the one given in \cite[\S 9]{Yu}, under our global assumption on $p$ given in \S \ref{sec-R}.

A generic datum of $G$ is a tuple $\L = (G^i, \phi_i, r_i)_{0 \le i \le d}$ such that
\begin{itemize}
    \item $G^0 \subsetneq G^1 \subsetneq \cdots \subsetneq G^d = G$ are unramified Levi subgroups of $G$;

    \item $\phi_i$ is a character of $(G^i)^F$ for $0 \le i \le d$;

    \item $0 =: r_{-1} < r_0 < \cdots < r_{d-1} \le r_d$ if $d \ge 1$ and $0 \le r_0$ if $d = 0$;

    \item $\phi_i$ is of depth $r_i$ and is $(G^i, G^{i+1})$-generic in the sense of \cite[\S 9]{Yu} for $0 \le i \le d-1$;

    \item $\phi_d$ is of depth $r_d$ if $r_{d-1} < r_d$ and is trivial otherwise.

    %\item $\phi_{-1}$ is trivial if $G^0 = T$ and $r_0 > 0$, and is of depth $0$ otherwise.
\end{itemize}
Moreover, we say $\L$ is normalized if (the pull-back of) $\phi_i$ is trivial over $(G_\sc^i)^F$ for $0 \le i \le d$.

\smallskip

Let $\L = (G^i, \phi_i, r_i)_{0 \le i \le d}$ be a normalized generic datum such that $\tx \in \CB(G^0, k)$. We define
the following $F$-stable subgroups
\begin{align*}
K_\L = K_{G, \L} &= (G^0)_\tx (G^1)_\tx^{r_0/2} \cdots (G^d)_\tx^{r_{d-1}/2}; \\
H_\L = H_{G, \L} &= (G^0)_\tx^{0+} (G^1)_\tx^{r_0/2} \cdots (G^d)_\tx^{r_{d-1}/2}; \\ K_\L^+ = K_{G, \L}^+ &= (G^0)_\tx^{0+} (G^1)_\tx^{r_0/2+} \cdots (G^d)_\tx^{r_{d-1}/2+}; \\ E_\L = E_{G, \L} &= (G^0_\der)_\tx^{0+,0+} (G^1_\der)_\tx^{r_0+, r_0/2+} \cdots (G^d_\der)_\tx^{r_{d-1}+, r_{d-1}/2+}. \end{align*} Here $(G^i_\der)_\tx^{r_{i-1}+, r_{i-1}/2+} \subseteq G_\tx$ is the subgroup generated by $(G^i_\der)_\tx^{r_{i-1}+}$ and $(G^\a)_\tx^{r_{i-1}/2+}$ for $\a \in \Phi(G^i, S) \setminus \Phi(G^{i-1}, S)$, where $S$ is any $\brk$-splitting maximal torus of $G^0$, and $G^\a \subseteq G$ is the root subgroup corresponding to $\a$.

For $0 \le i \le d$ let $\hat \phi_i$ be the character of $(G^i)_\tx^F (G_\tx^{r_i/2+})^F$ defined in \cite[\S 4]{Yu}, which extends $\phi_i$. We define a character of $(K_\L^+)^F$ by \[\chi_\L = \prod_{i = 0}^d \hat \phi_i |_{(K_\L^+)^F}.\] By definition $\chi_\L$ is trivial on $(E_\L)^F$.

\smallskip

Let $B \supseteq T$ be $\brk$-rational Borel subgroup of $G$ with unipotent radical $U$. Note that the set $\Phi(B, T)$ of roots in $B$ forms a positive system of $\Phi(G, T)$. We say a $\brk$-rational Levi subgroup $M \supseteq T$ is standard with respect to $B$ or $U$ if $\Phi(M, T)$ is standard with respect to $\Phi(B, T)$, that is, simple roots of $\Phi(M \cap B, T) = \Phi(M, T) \cap \Phi(B, T)$ are also simple roots of $\Phi(B, T)$. In this case, $M$ and $U$ generates a parabolic subgroup $P = M N$ of $G$, where $N \subseteq U$ is the unipotent radical of $P$.
\begin{lemma} \label{standard-U}
    There exists a $\brk$-rational Borel subgroup $B$ containing $T$ such that $(G^i)_{0 \le i \le d}$ is standard with respect to $B$.
\end{lemma}
\begin{proof}
    We fix a Borel subgroup $B$ containing $T$. Let $W$ be the Weyl group of the root system $\Phi(G, T)$. It suffices to show there exists $w \in W$ such that $\Phi(G^i, T)$ is standard with respect to $w(\Phi(B, T))$ for $0 \le i \le d$.

    We argue by induction on $d$. If $d = 1$, the statement is a well-known result on root systems. Now assume that $d = n \ge 2$ and the statement holds for $d \le n-1$. We show it also holds for $d = n$. Indeed, by induction hypothesis there exists $x \in W$ such that $\Phi(G^d, T)$ is standard with respect to $x(\Phi(B, T))$. Note that $\Phi(G^d, T) \cap x(\Phi(B, T))$ is a positive system of $\Phi(G^d, T)$. By induction hypothesis there exists $u \in W(G^d)$ such that $\Phi(G^i, T)$ ($0 \le i \le d-1$) is standard with respect to \[u(\Phi(G^d, T) \cap x(\Phi(B, T))) = \Phi(G^d, T) \cap ux(\Phi(B, T)).\] Here $W(G^d)$ denotes the Weyl group of $\Phi(G^d, T)$. Let $w = u x$. By construction, it follows that $\Phi(G^i, T)$ is standard with respect to $w(\Phi(B, T))$ for $0 \le i \le d$. The proof is finished.
\end{proof}

\subsection{Howe factorization}\label{subsec:Howe}
Let $T$ be a $k$-rational and $\brk$-splitting maximal torus of $G$ such that $\tx \in \CA(T, \brk)$. Let $\phi$ be a character of $T^F$ of depth $r_\phi \ge 0$. Following \cite{Kal}, a Howe factorization of $\phi$ is a pair $(\L, \phi_{-1}) = (G^i, \phi_i, r_i)_{-1 \le i \le d}$, where $\phi_{-1}$ is a character of $T^F$ of depth $r_{-1} := 0$ and $\L = (G^i, \phi_i, r_i)_{0 \le i \le d}$ is a normalized generic datum such that $T = G^{-1} \subseteq G^0$ and \[\phi = \prod_{i=-1}^d \phi_i|_{T^F}.\]

The following existence result is proved by Kaletha \cite[Theorem 3.6.7]{Kal}, under the assumption (*).
\begin{theorem} \label{existence-Howe}
    Each character $\phi$ of $T^F$ has a Howe factorization.
\end{theorem}
Let $(\L, \phi_{-1}) = (G^i, \phi_i, r_i)_{-1 \le i \le d}$ be a Howe factorization of $\phi$. We put $K_\phi = K_{G, \phi} = K_{G, \L}$, $H_\phi = H_{G, \phi} = H_{G, \L}$, $\phi^\natural = \chi_\L$ and so on, which are independent of the choices of $(\L, \phi_{-1})$. Note that $K_\phi^+ = E_\phi T_\tx^{0+}$ and hence $(K_\phi^+)^F = E_\phi^F (T^{0+})^F$. As $\phi$ is trivial over $E_\phi^F \cap T^F$, it follows that $\phi^\natural$ is the unique extension of $\phi$ which is trivial on $E_\phi^F$.

Let $\g \in \Phi(G, T)$. We denote by $T^\g$ the one-dimensional subtorus of $T$ corresponding the coroot of $\g$. Define $i(\g) = i^\phi(\g)$ to be the integer $0 \le i \le d$ such that $\g \in \Phi(G^i, T) \setminus \Phi(G^{i-1}, T)$, and define $r(\g) = r^\phi(\g) = r_{i^\phi(\g)-1}$.
\begin{lemma} \label{facts}
    We have the following properties.
    \begin{itemize}
        \item The subgroups $[K_{\phi, r}, K_{\phi, r}^+] \subseteq E_{\phi, r} \subseteq K_{\phi, r}^+$ are normalized by $K_{\phi, r}$;

        \item The natural multiplication map induces an isomorphism \[h: (E_{\phi, r} \cap T_r) \times \prod_{\g \in \Phi(G, T)} (G^\g)_r^{r(\g)/2+} \overset \sim \longrightarrow E_{\phi, r},\] where $E_{\phi, r} \cap T_r$ is generated by $(T^\g)_r^{r(\g)+}$ for $\a \in $;

        \item The intersection $E_{\phi, r} \cap T_r)$ is an affine space, and hence so is $E_{\phi, r}$.
    \end{itemize}
\end{lemma}
For $0 \le i \le j \le d$ we have \[[(G^i)_r^{r_{i-1}/2}, (G^j)_r^{r_{j-1}/2+}], [(G^i)_r^{r_{i-1}/2+}, (G^j)_r^{r_{j-1}/2}] \subseteq (G^j_\der)_r^{r_{j-1}+, r_{j-1}/2+} \subseteq (G^j_\der)_r^{r_{j-1}/2+}.\] Hence the first statement follows.

By construction, $E_{\phi, r}$ is generated by the image $\Im h$ of $h$. Moreover, one checks that $\Im h$ is a subgroup of $G_r^{0+}$ (since the set affine roots appearing in $E_{\phi, r}$ is closed under addition). Hence $h$ is surjective. By the Iwahori decomposition, the natural multiplication map \[T_r^{0+} \times \prod_{\g \in \Phi(G, T)} (G^\g)_r^{0+} \overset \sim \longrightarrow G_r^{0+}\] is an isomorphism. Hence $h$ is injective, and the second statement follows.

Let $\pi: G_\sc \to G_\der$ be the simply connected covering. Let $T_\der = G_\der \cap T$ and $T_\sc = \pi\i(T_\der)$. By Lemma \ref{standard-U}, there exists a base $\D$ of $\Phi(G, T)$ and a sequence of subsets $J^1 \subsetneq \cdots \subsetneq J^d = \D$ such that $J^i$ is a base of $\Phi(G^i, T)$ for $0 \le i \le d$. Then we have an injective homomorphism \[f: \prod_{\a \in \D} (T^\a)_r^{r(\a)+} \to T_\sc^{0+}.\] By \cite[Lemma 3.3.2]{Kal}, the restriction of $\pi$ to $T_\sc$ induces an isomorphism $\pi_T^+: T_\sc^{0+} \overset \sim \longrightarrow T_\der^{0+}$. Hence composition $\pi_T^+ \circ f$ is injective. Moreover, by construction we have $\Im (\pi_T^+ \circ f) = E_{\phi, r}$. Then the last statement follows by noticing that each group $(T^\a)_r^{r(\a)+}$ for $\a \in \D$ is an affine space.
\smallskip

\subsection{The representations $\CR_{T, U, r}^G(\phi)$} \label{subsec:CR} Let $T$ and $\phi$ be as in \S \ref{subsec:Howe}. Let $B = TU$ and $\ov B = T \ov U \subseteq G$ be two opposite Borel subgroups containing $T$, where $U$ and $\ov U$ are their unipotent radicals respectively. We define the following Iwahori-type subgroup \[\CI_{\phi, U} = \CI_{G, \phi, U} = (K_\phi \cap U) (E_\phi \cap T) (K_\phi^+ \cap \overline U) \subseteq K_\phi.\] Let $r \ge r_\phi$. Let $K_{\phi, r}$ be the image of $K_\phi$ under the quotient map $G_\tx \to G_r$, and define $H_{\phi ,r}$, $E_{\phi, r}$, $K_{\phi, r}^+$ and $\CI_{\phi, U, r}$ in a similar way.

We consider the variety \[Z_{\phi, U, r} = Z_{G, \phi, U, r} = \{g \in G_r; g\i F(g) \in F\CI_{\phi, U, r}\},\] which admits a natural action of $G_r^F \times T_r^F$ by left/right multiplication. The isotropic subspace \[\CR_{T, U, r}^G(\phi) :=  H_c^*(Z_{\phi, U, r}, \ov \BQ_\ell)[\phi|_{T_r^F}]\] gives a virtual representation of $G_r^F$. We put \begin{align*} Z_{\phi, U, r}^K &= Z_{\phi, U, r} \cap K_{\phi, r}; \\ \quad Z_{\phi, U, r}^H &= Z_{\phi, U, r} \cap H_{\phi, r}; \\ Z_{\phi, U, r}^L &= Z_{\phi, U, r} \cap L_r, \end{align*} where $T \subseteq L$ is a Levi subgroup of $G$. Note that $Z_{\phi, U, r}^K$ admits a natural action of $K_{\phi, r}^F \times T_r^F$ by left/right multiplication. Hence for each $i \in \BZ$, the isotropic subspace \[H_c^i(Z_{\phi, U, r}^K, \ov\BQ_\ell)[\phi |_{T_r^F}]\] gives a representation of $K_{\phi, r}^F$. As $\CI_{\phi, U, r} \subseteq K_{\phi, r}$, it follows that \[Z_{\phi, U, r} = \sqcup_{\g \in G_r^F / K_{\phi, r}^F} \g Z_{\phi, U, r}^K.\] In particular, we have $H_c^i(Z_{\phi, U, r}, \ov\BQ_\ell)[\phi |_{T_r^F}] \cong \ind_{K_{\phi, r}^F}^{G_r^F} H_c^i(Z_{\phi, U, r}^K, \ov\BQ_\ell)[\phi|_{T_r^F}]$ as $G_r^F$-modules.

\begin{proposition} \label{natural}
    We have $Z_{\phi, U, r}^K = Z_{\phi, U, r}^K E_{\phi, r}$. For each $i \in \BZ$ the quotient map $Z_{\phi, U, r}^K \to Z_{\phi, U, r}^K / E_{\phi, r}$ induces an isomorphism of $G_r^F$-modules \[H_c^{i + 2\dim E_{\phi, r}}(Z_{\phi, U, r}^K, \ov \BQ_\ell)[\phi|_{T_r^F}] \cong H_c^i(Z_{\phi, U, r}^K / E_{\phi, r}, \ov \BQ_\ell)[\phi|_{T_r^F}].\]

    Moreover, via left/right multiplication, $(K_{\phi, r}^+)^F$ acts on $H_c^i(Z_{\phi, U, r}^K, \ov \BQ_\ell)[\phi|_{T_r^F}]$ by the character $\phi^\natural$.
\end{proposition}
\begin{proof}
    The first statement follows from the inclusion $E_{\phi, r} \subseteq \CI_{\phi, U, r}$. The second follows from that $E_{\phi, r}$ is isomorphic to an affine spaces, see Lemma \ref{facts}. In particular, the action of $E_{\phi, r}^F$ on $ H_c^i(Z_{\phi, U, r}^K, \ov \BQ_\ell)$, induced by left/right multiplication, is trivial.  Let $h \in (K_{\phi, r}^+)^F$. As $(K_{\phi, r}^+)^F = (T_r^{0+})^F E_{\phi, r}^F$ we have $h = t h_1$ for some $t \in (T_r^{0+})^F$ and $h_1 \in E_{\phi, r}^F$. Note that $[(K_{\phi, r}^+)^F, K_{\phi, r}^F] \subseteq E_{\phi, r}^F$. For any $g \in Z_{\phi, U, r}$ we have \[ h g E_{\phi, r} = g E_{\phi, r} h = g h E_{\phi, r} = g t E_{\phi, r} = g E_{\phi, r} t,\] where the second equality from that $[K_{\phi, r}, K_{\phi, r}^+] \subseteq E_{\phi, r}$. Thus the left/right action of $h$ on $H_c^i(Z_{\phi, U, r}^K, \ov \BQ_\ell)[\phi|_{T_r^F}]$ is given the scalar $\phi(t) = \phi^\natural(h)$ as desired.
\end{proof}

\begin{corollary} \label{restriction}
    Assume $r_\phi > 0$. Let $\rho$ be an irreducible $G_r^F$-module which appears in $\CR_{T, U, r}^G(\phi)$. Then $\rho|_{(G_r^{r_\phi})^F}$ is a sum of weight spaces on which $(G_r^{r_\phi})^F$ acts via the characters ${}^\g \phi^\natural$ for $\g \in G_r^F / K_{\phi, r}^F$.
\end{corollary}
\begin{proof}
    By assumption, there exists $i \in \BZ$ such that $\rho$ appears in \[H_c^i(Z_{\phi, U, r}, \ov\BQ_\ell)[\phi|_{T_r^F}] = \ind^{G_r^F}_{K_{\phi, r}^F} H_c^i(Z_{\phi, U, r}^K, \ov\BQ_\ell)[\phi|_{T_r^F}].\] As $r_\phi > 0$, $(G_r^{r_\phi})^F$ belongs to $K_{\phi, r}^+$ and is normalized by $G_r^F$. It follows from Proposition \ref{natural} that $H_c^i(Z_{\phi, U, r}, \ov\BQ_\ell)[\phi|_{T_r^F}]$ is a sum of weight spaces on which $(G_r^{r_\phi})^F$ acts via the characters ${}^\g \phi^\natural$ for $\g \in G_r^F / K_{\phi, r}^F$. So the statement follows since $\rho$ is a $(G_r^{r_\phi})^F$-submodule of $H_c^i(Z_{\phi, U, r}, \ov\BQ_\ell)[\phi|_{T_r^F}]$.
\end{proof}

\subsection{Properties} Now we discuss some properties of  $\CR_{T, U, r}^G(\phi)$ parallel to those of $R_{T, U, r}^G(\phi)$.
\begin{proposition} \label{inf-CR}
    We have $\CR_{T, U, r}^G(\phi) \cong \CR_{T, U, r_\phi}^G(\phi)$ as $G_r^F$-modules.
\end{proposition}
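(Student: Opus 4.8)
The plan is to mimic the proof of the degeneracy statement for $R_{T,U,r}^G(\phi)$ (Theorem \ref{degenracy}), since $\CR_{T,U,r}^G(\phi)$ is defined by exactly the same cohomological recipe applied to the variety $Z_{\phi,U,r}$ in place of $X_{U,r}$. The only input that distinguishes the two is the shape of the ``type'' subgroup: $X_{U,r}$ uses $FU_r$, whereas $Z_{\phi,U,r}$ uses $F\CI_{\phi,U,r}$ with $\CI_{\phi,U} = (K_\phi\cap U)(E_\phi\cap T)(K_\phi^+\cap\ov U)$. So the first step is to record how $\CI_{\phi,U,r}$ relates to $\CI_{\phi,U,r_\phi}$ under the projection $G_r\to G_{r_\phi}$: the kernel of this projection is $(G_\tx^{r_\phi+}/G_\tx^{r+})$, a connected unipotent group, and one checks that the factors $K_\phi\cap U$, $E_\phi\cap T$, $K_\phi^+\cap\ov U$ at level $r$ are obtained from their level-$r_\phi$ counterparts by pulling back along this projection (using that $r_\phi = r_d = r_{d-1}$ or larger forces the ``top'' Moy--Prasad pieces $r_{d-1}/2$, etc., all to lie below $r_\phi$). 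This identifies $Z_{\phi,U,r}$ as the preimage, under $G_r\to G_{r_\phi}$, of $Z_{\phi,U,r_\phi}$ together with a torsor condition along the unipotent kernel.

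Next I would run the standard fibration argument. Write $\pi\colon G_r\to G_{r_\phi}$ and $N = \ker\pi$, a connected unipotent $\BF$-group with $N^F$-action. The restriction of $\pi$ to $Z_{\phi,U,r}$ should factor as a composite of affine-space bundles over $Z_{\phi,U,r_\phi}$: one checks that for $g\in G_r$, the condition $g\i F(g)\in F\CI_{\phi,U,r}$ depends on $\pi(g)$ through the condition $\pi(g)\i F(\pi(g))\in F\CI_{\phi,U,r_\phi}$, and the remaining freedom in $g$ over a fixed $\pi(g)$ is cut out inside $N$ by an equation of Lang type $n\i F(n)\in (\text{translate of a subgroup of }N)$. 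Because $N$ is connected unipotent and the relevant subgroup is $F$-stable and again connected unipotent, the fibers are isomorphic to affine spaces $\BA^m$ (with the $T_r^F$-action on the $\phi$-part factoring through $T_{r_\phi}^F$), so by homotopy invariance of $\ell$-adic cohomology with compact support $H_c^i(Z_{\phi,U,r},\ov\BQ_\ell)[\phi]\cong H_c^{i-2m'}(Z_{\phi,U,r_\phi},\ov\BQ_\ell)[\phi](-m')$ as $G_{r_\phi}^F\times T_{r_\phi}^F$-modules, where $m'$ is the (constant) fiber dimension. Summing with signs $(-1)^i$ then gives $\CR_{T,U,r}^G(\phi)\cong\CR_{T,U,r_\phi}^G(\phi)$ as virtual $G_r^F$-modules, the $G_r^F$-action factoring through $G_{r_\phi}^F$ on the right-hand side via $\pi$.

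The main obstacle I anticipate is \emph{verifying that the fibers really are affine spaces}, i.e. that the equation on $N$ carving out the fiber is genuinely of Lang--Steinberg type relative to a connected $F$-stable subgroup of $N$, uniformly in the base point. This requires a careful commutator/filtration computation: one must expand $g\i F(g)$ for $g = g_0 n$ with $g_0$ a lift of a point of $Z_{\phi,U,r_\phi}$ and $n\in N$, and show that modulo the deeper Moy--Prasad layers the $N$-component of the defining condition becomes $n\i F(n)\,c \in (\CI_{\phi,U,r}\cap N)$ for some constant $c$ depending on $g_0$; the key point is that $\CI_{\phi,U,r}\cap N$ is a connected unipotent $F$-stable subgroup, which follows from the product decomposition of $\CI_{\phi,U}$ and the fact that each factor $(K_\phi\cap U)$, $(E_\phi\cap T)$, $(K_\phi^+\cap\ov U)$ meets $N$ in a Moy--Prasad-graded (hence connected) piece. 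This is essentially the same bookkeeping that underlies Theorem \ref{degenracy}; I would phrase it so as to reuse that argument verbatim where possible, isolating only the one extra check that the subgroup $\CI_{\phi,U}$ is ``compatible with truncation'' in the required sense. Everything else is routine.
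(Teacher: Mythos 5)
Your proposal is fundamentally the same strategy as the paper's: project $Z_{\phi,U,r}\to Z_{\phi,U,r_\phi}$, observe that the ``new'' Moy--Prasad layers are absorbed into $\CI_{\phi,U,r}$ because $r>r_\phi$, and conclude by a fiber computation. The paper reduces one filtration step at a time ($r=s+$), which avoids some of the bookkeeping you anticipate; this is a cosmetic difference.

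There is, however, one step in your write-up that is stated incorrectly and would need fixing: the fibers of $\pi$ are \emph{not} affine spaces $\BA^m$. With $N=\ker\pi$ and $H=\CI_{\phi,U,r}\cap N$ (both connected unipotent, as you say), the fiber is a Lang set of the form $\{n\in N : n^{-1}F(n)\,c\in H\}$, and Lang--Steinberg gives a disjoint union of $|(N/H)^F|$ copies of $H$, not a single affine space; here $(N/H)^F$ is a finite group coming from the torus layer (in the paper's one-step version, the fiber is $\{y\in G_r^r : y^{-1}F(y)\in (G_\der)_r^r\}=(T_r^r)^F(G_\der)_r^r$, visibly disconnected). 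Your displayed cohomology isomorphism therefore does not hold before passing to the $[\phi]$-part. The repair is the one the paper uses: $(T_r^{r_\phi+})^F$ lies in the kernel of $\phi$ and acts on the fiber by freely permuting these components transitively, so taking $(T_r^{r_\phi+})^F$-invariants (equivalently, the $\phi$-isotypic part) collapses the disjoint union to the cohomology of a single affine space of the expected dimension. You allude to the $T_r^F$-action factoring through $T_{r_\phi}^F$, so you have the right ingredient in hand, but it needs to be invoked \emph{before} homotopy invariance is applied, not after. With that correction the proposal matches the paper's proof.
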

\begin{proof}
   We can assume $r > r' \ge r_\phi$ with $r = r'+$. Let $\pi_r: Z_{\phi, U, r} \to  Z_{U, \phi, r'}$ be the natural projection. Let $x \in  Z_{U, \phi, s}$. It suffices to show \[H_c^i(\pi_r\i(x), \ov \BQ_\ell)^{(T_r^r)^F} = \begin{cases} \overline \BQ_\ell, & \text{ if } i=2 \dim G_\der; \\ 0, & \text{otherwise.} \end{cases}\] Since $r > r_\phi$, we have $(G_\der)_r^r \subseteq \CI_{\phi, U, r}$. Moreover, as $G_r^r = T_r^r (G_\der)_r^r$, it follows that \[\pi_r\i(x) \cong \{y \in G_r^r; y\i F(y) \in (G_\der)_r^r\} = (T_r^r)^F (G_\der)_r^r,\] from which the statement follows directly.
\end{proof}

Let $\pi: G_\sc \to G_\der$ be the simply connected covering. Let $T_\der = T \cap G_\der$ and $T_\sc = \pi\i(T_\der)$ be the maximal tori of $G_\der$ and $G_\sc$ respectively. Set $\phi_\der = \phi |_{T_\der^F}$ and  $\phi_\sc = \phi_\der \circ \pi$. We write $Z_{\phi_\der, U, r}^\der = Z_{G_\der, \phi_\der, U, r}$, $Z_{\phi_\sc, U, r}^\sc = Z_{G_\sc, \phi_\sc, U, r}$ and so on for simplicity.

\begin{lemma} \label{der}
    We have $Z_{\phi, U, r} = \bigsqcup_{\t \in T_r^F / (T_\der)_r^F} Z_{\phi, U, r}^\der \t$.
\end{lemma}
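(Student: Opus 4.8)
\textbf{Proof proposal for Lemma \ref{der}.}

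The plan is to exhibit the claimed decomposition by analyzing how the right translation action of $T_r^F$ permutes $Z_{\phi, U, r}$, and by checking that the defining condition ``$g\i F(g) \in F\CI_{\phi, U, r}$'' interacts correctly with the splitting $T_r = Z(G)_r \cdot (T \cap G_\der)_r$ (up to isogeny). First I would record the basic structural fact that, because $p$ does not divide $|\pi_1(G_\der)|$, the multiplication map gives $G_r = Z(G)_r^\circ \cdot (G_\der)_r$ with the two factors meeting in a finite central subgroup, and likewise $T_r = Z(G)_r^\circ \cdot (T \cap G_\der)_r$; consequently every $g \in G_r$ can be written $g = z h$ with $z \in Z(G)_r^\circ$ (or more precisely a lift from a fixed set of coset representatives) and $h \in (G_\der)_r$. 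The key point is that $\CI_{\phi, U, r}$ is built entirely out of root subgroups and the torus piece $E_\phi \cap T$, all of which lie in $(G_\der)_r$ — more precisely $\CI_{\phi, U, r} \subseteq (G_\der)_r$ — so the condition $g\i F(g) \in F\CI_{\phi, U, r}$ only constrains the $(G_\der)_r$-component of $g$ and forces the central component of $g\i F(g)$ to be trivial.

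Concretely, given $g \in Z_{\phi, U, r}$, write $g = z h$ as above. Then $g\i F(g) = z\i F(z) \cdot h\i F(h)$ modulo the finite central overlap, and since $F\CI_{\phi, U, r} \subseteq (G_\der)_r$ while $z\i F(z)$ is central, we must have $z \in Z(G)_r^{F}$-translate-wise controlled: precisely, $z\i F(z)$ lies in the finite group $Z(G)_r^\circ \cap (G_\der)_r$, hence $z$ is $F$-fixed up to that finite subgroup, and after absorbing the finite part into $h$ we may take $z \in T_r^F$ and $h \in Z_{\phi, U, r} \cap (G_\der)_r$. This shows the natural map
\[
\bigsqcup_{\t \in T_r^F / (T \cap G_\der)_r^F} \bigl(Z_{\phi, U, r} \cap (G_\der)_r\bigr)\, \t \longrightarrow Z_{\phi, U, r}, \qquad (h, \t) \mapsto h\t,
\]
is surjective; here the right translation by $\t \in T_r^F$ preserves $Z_{\phi, U, r}$ because $(h\t)\i F(h\t) = \t\i (h\i F(h)) \t$ and $\CI_{\phi, U, r}$ is normalized by $T_r$ (it is a product of $T_r$-stable pieces, being generated by root groups and a subtorus). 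For injectivity, suppose $h\t = h'\t'$ with $h, h' \in Z_{\phi, U, r} \cap (G_\der)_r$ and $\t, \t' \in T_r^F$; then $\t'\t\i = h'^{-1}h \in T_r \cap (G_\der)_r = (T \cap G_\der)_r$, so $\t$ and $\t'$ represent the same coset in $T_r^F/(T \cap G_\der)_r^F$, and then $h = h'$. Finally one checks this bijection is an isomorphism of varieties, which is immediate since multiplication and the coset decomposition are morphisms and the components are open and closed.

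The main obstacle I anticipate is handling the finite central intersection $Z(G)_r^\circ \cap (G_\der)_r$ cleanly: the decompositions $G_r = Z(G)_r^\circ \cdot (G_\der)_r$ and $T_r = Z(G)_r^\circ \cdot (T\cap G_\der)_r$ are not direct products, so one must argue that the ambiguity is a finite group of multiplicative type that, under the hypothesis $p \nmid |\pi_1(G_\der)|$, is $\ell$-torsion-free in a way compatible with $F$ — equivalently, that $H^1(F, Z(G)_r^\circ \cap (G_\der)_r)$ behaves well — so that the choice of representative $z$ can always be adjusted to lie in $T_r^F$. Once that bookkeeping is done, the rest is formal. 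An alternative, perhaps cleaner route that avoids the isogeny subtleties is to observe directly that $T_r^F$ acts on the (finite, since we may reduce to $r = r_\phi$ by Proposition \ref{inf-CR}, or work at finite level throughout) set of connected components — rather, on $Z_{\phi,U,r}$ — by right translation, that $Z_{\phi, U, r} \cap (G_\der)_r$ is stable under the subgroup $(T\cap G_\der)_r^F$, and that the orbit-stabilizer count matches; but the component-wise multiplicative argument above is the most transparent, so I would present that.
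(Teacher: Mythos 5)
Your key observations are the right ones — $\CI_{\phi,U,r}\subseteq (G_\der)_r$, the fact that $T_r^F$ normalizes $\CI_{\phi,U,r}$ so right translation preserves $Z_{\phi,U,r}$, and the injectivity via $T_r\cap (G_\der)_r=(T\cap G_\der)_r$ — and these match the ingredients the paper uses. But the surjectivity step is where your argument has a gap, and the detour through $Z(G)_r^\circ\cdot (G_\der)_r$ is what creates it. After writing $g=zh$ you correctly deduce $z\i F(z)\in Z(G)_r^\circ\cap(G_\der)_r$, but the step ``hence $z$ is $F$-fixed up to that finite subgroup, and after absorbing the finite part into $h$ we may take $z\in T_r^F$'' is precisely the content that needs a proof: Lang's theorem does not apply to a finite group, so you cannot directly untwist $z\i F(z)$ inside $Z(G)_r^\circ\cap(G_\der)_r$. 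You acknowledge this in the ``main obstacle'' paragraph but never actually resolve it; the appeal to $\ell$-torsion-freeness and $H^1(F,-)$ of the finite overlap is not the relevant mechanism.

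The paper avoids the problem by not decomposing through $Z(G)^\circ$ at all. It cites the natural isomorphism $G_r^F/(G_\der)_r^F\cong T_r^F/(T\cap G_\der)_r^F$, which comes from the isomorphism of jet tori $T_r/(T\cap G_\der)_r\cong G_r/(G_\der)_r$ together with Lang's theorem applied to the \emph{connected} normal subgroups $(G_\der)_r$ and $(T\cap G_\der)_r$ (the jet groups of parahorics are connected by construction). Given $g\in Z_{\phi,U,r}$, since $g\i F(g)\in(G_\der)_r$ the image of $g$ in $G_r/(G_\der)_r$ is $F$-fixed; by the cited isomorphism that image lifts to some $\tau\in T_r^F$, and then $g\tau\i\in(G_\der)_r$ with $(g\tau\i)\i F(g\tau\i)=\tau\,g\i F(g)\,\tau\i\in F\CI_{\phi,U,r}$, giving $g\in(Z_{\phi,U,r}\cap(G_\der)_r)\tau$ directly. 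If you want to salvage your route, the ``absorbing'' step should be carried out by a Lang argument on the connected group $(T\cap G_\der)_r$ (which contains $Z(G)_r^\circ\cap(G_\der)_r$): choose $c\in(T\cap G_\der)_r$ with $c\i F(c)=(z\i F(z))\i$ and replace $(z,h)$ by $(zc, c\i h)$ — but note that this already lands you in $T_r^F$, which shows the passage through $Z(G)^\circ$ was an unnecessary detour in the first place.
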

\begin{proof}
    It follows from the inclusion $\CI_{\phi, U, r} = \CI_{\phi_\der, U, r}^\der \subseteq (G_\der)_r$ and the natural isomorphism $T_r^F / (T_\der)_r^F \cong G_r^F / (G_\der)_r^F$.
\end{proof}

\begin{lemma} \label{bijection}
    There is a natural bijection $T_r^F / \pi((T_\sc)_r^F) \cong G_r^F / \pi((G_\sc)_r^F)$.
\end{lemma}
\begin{proof}
    It follows in the same way of \cite[Proposition 1.23]{DL}.
\end{proof}

\begin{lemma} \label{sc}
    We have $Z_{\phi, U, r} = \bigsqcup_{\t \in T_r^F / \pi((T_\sc)_r^F)} \pi(Z_{\phi_\sc, U, r}^\sc) \t$.
\end{lemma}
\begin{proof}
    Since $p \nmid |\pi_1(G_\der)|$, we have $\pi(\CI_{\phi_\sc, U, r}^\sc) = \CI_{\phi, U, r}$. Hence \[Z_{\phi, U, r} = \cup_{\g \in G_r^F / \pi((G_\sc)_r^F)} \g \pi(Z_{\phi_\sc, U, r}^\sc) = \cup_{\t \in T_r^F / \pi((T_\sc)_r^F)} \t \pi(Z_{\phi_\sc, U, r}^\sc),\] where the second equality follows from Lemma \ref{bijection}. We claim that
    \begin{align*} \tag{a} \text{ $\pi(Z_{\phi_\sc, U, r}^\sc)$ is normalized by any $\t \in T_r$ such that $\t\i F(\t) \in Z(G)_r$.} \end{align*}
    Indeed, since $T_r = (T_\der)_r Z(G)_r$, we can assume further that $\t \in (T_\der)_r$. Let $\t_\sc \in (T_\sc)_r$ such that $\pi(\t_\sc) = \t$. Then $\t_\sc\i F(\t_\sc) \in Z(G_\sc)_r$. As $\CI_{\phi_\sc, U, r}^\sc$ is normalized by $(T_\sc)_r$, it follows directly that $Z_{\phi_\sc, U, r}^\sc$ is normalized by $\t_\sc$. Hence (a) is proved.

    By (a) we deduce that
    \begin{align*} \tag{b} \text{  $Z_{\phi, U, r} = \cup_{\t \in T_r^F / \pi((T_\sc)_r^F)} \pi(Z_{\phi_\sc, U, r}^\sc) \t$.} \end{align*}
    It remains to show the union in (b) is disjoint. Suppose there exist $\t \in T_r^F$ and $g, g' \in Z_{\phi_\sc, U, r}^\sc$ such that $\pi(g) \t = \pi(g')$. We show that $\t \in \pi((T_\sc)_r^F)$. Indeed, let $\pr: G_r \to G_r / T_r$ and $\pr_\sc: (G_\sc)_r \to (G_\sc)_r / (T_\sc)_r$ be the natural projections. By identifying the quotient spaces $(G_\sc)_r / (T_\sc)_r \cong G_r / T_r$ in the natural way, we have \[\pr_\sc(g) = \pr(\pi(g)) = \pr(\pi(g) \t) = \pr(\pi(g')) = \pr_\sc(g').\] By definition, $g' = g t$ for some $t \in (T_\sc)_r$. Hence $\t = \pi(t)$. Moreover, as $g, g' \in Z_{\phi_\sc, U, r}^\sc$ it follows that $t\i F(t) \in E_{\phi_\sc, r}^{G_\sc} \cap (T_\sc)_r$. Since $E_{\phi_\sc, r}^\sc \cap (T_\sc)_r$ is connected and $F$-stable, we can write $t = t_1 t_2$ with $t_1 \in (T_\sc)_r^F$ and $t_2 \in E_{\phi_\sc, r}^\sc \cap (T_\sc)_r \subseteq (T_\sc)_r^{0+}$. Then $\pi(t_1)\i \t \in ((T_\der)_r^{0+})^F$. As $p \nmid |\pi_1(G_\der)|$, it follows from \cite[Lemma 3.13]{Kal} that $\pi$ induces a bijection $((T_\sc)_r^{0+})^F \cong ((T_\der)_r^{0+})^F$. Hence $\t \in \pi((T_\sc)_r^F)$ as desired.
\end{proof}

\begin{proposition} \label{ass-CR}
    Let $\th$ be a character of $G^F$ which is trivial over $G_\sc^F$ and $(G_\tx^{r+})^F$. Then $\CR_{T, U, r}^G(\phi) \otimes \th|_{G_r^F} \cong \CR_{T, U, r}^G(\phi \otimes \th|_{T^F})$ as $G_r^F$-modules.
\end{proposition}
\begin{proof}
    We follows the arguments of \cite[Proposition 3.7]{Ch24}. Let $g \in G_r^F$. Then we have \[\CR_{T, U, r}^G(\phi \otimes \th|_{T^F})(g) = \frac{1}{|T_r^F|} \sum_{t \in T_r^F} \tr((g, t); H_c^*(Z_{\phi, U, r}, \ov\BQ_\ell)) \phi(t)\i \th(t)\i.\] Assume $g \in t_1 \pi((G_\sc)_r^F)$ for some $t_1 \in T_r^F$. Let $t \in T_r^F$. If $t_1 t \notin \pi((T_\sc)_r^F)$, then the action of $(g, t)$ on \[Z_{\phi, U, r} = \bigsqcup_{\t \in T_r^F / \pi((T_\sc)_r^F)} \pi(Z_{\phi_\sc, U, r}^\sc) \t\] sends each component $\pi(Z_{\phi_\sc, U, r}^\sc) \t$ to a different one. Hence \[\tr((g, t); H_c^*(Z_{\phi, U, r}, \ov\BQ_\ell)) = 0.\] If $t_1 t \in \pi((T_\sc)_r^F)$, we have $\th(g) = \th(t_1) = \th(t)\i$ as $\th \circ \pi$ is trivial over $G_\sc^F$. Therefore, it is always true that \begin{align*}\CR_{T, U, r}^G(\phi \otimes \th|_{T^F})(g) &= \frac{1}{|T_r^F|} \sum_{t \in T_r^F} \tr((g, t); H_c^*(Z_{\phi, U, r}, \ov\BQ_\ell)) \phi(t)\i \th(g) \\ &= \CR_{T, U, r}^G(\phi)(g) \th(g) \\ &= (\CR_{T, U, r}^G(\phi) \otimes \th|_{G_r^F})(g). \end{align*} Thus $\CR_{T, U, r}^G(\phi) \otimes \th|_{G_r^F} \cong \CR_{T, U, r}^G(\phi \otimes \th|_{T^F})$ and the proof is finished.
\end{proof}

\subsection{Restrictions to Levi subgroups} \label{subsec:Levi}
Let $T$ and $\phi$ be as in \S \ref{subsec:Howe}. Let $L \supseteq T$ be a Levi subgroup of $G$ over $k$. Let $(G^i, \phi_i, r_i)_{-1 \le i \le d_\phi}$ be a Howe factorization of $\phi$ in $G$. We show that it induces a Howe factorization of $\phi$ in $L$ in a natural way.

First note that there is a unique sequence of integers \[0 = j_0 < j_1 < \cdots < j_d < j_{d+1} = d_\phi + 1\] with $0 \le d \le d_\phi$ such that \begin{itemize}
    \item $G^{j_i - 1} \cap L \subsetneq G^{j_i} \cap L$ for $1 \le i \le d$;

    \item $G^{j_i} \cap L = G^{j_i + 1} \cap L = \cdots =  G^{j_{i+1} - 1} \cap L$ for $0 \le i \le d$.
\end{itemize}
We define $d_\phi^L = d$, $L^i = G^{j_{i+1}-1} \cap L$, $\phi_i^L = \prod_{k = j_i}^{j_{i+1}-1} \phi_k|_{L_i^F}$ and $r_i^L = r_{j_{i+1} - 1}$ for $0 \le i \le d_\phi^L$. Moreover, we set $L^{-1} = T$, $\phi_{-1}^L = \phi_{-1}$ and $r_{-1}^L = 0$.
\begin{lemma} \label{Howe-Levi}
    Let notation be as above. Then $(L^i, \phi_i^L, r_i^L)_{-1 \le i \le d_\phi^L}$ is a Howe factorization of $\phi$ in $L$. Moreover, $\CI_{G, \phi, U} \cap L = \CI_{L, \phi, U \cap L} (E_{G, \phi} \cap T)$ and $E_{G, \phi} \cap L = E_{L, \phi} (E_{G, \phi} \cap T)$.
\end{lemma}
\begin{proof}
    The first statement follows by definition. The second one follows from the equalities $K_{G, \phi} \cap U \cap L = K_{L, \phi} \cap U$ and $K_{G, \phi}^+ \cap \ov U \cap L = K_{L, \phi}^+ \cap \ov U$ together with the inclusions $E_{L,\phi} \cap T  \subseteq E_{G, \phi} \cap T \subseteq \CI_{G, \phi, U} \cap L$.
\end{proof}

We consider the following intersection \[Z_{\phi, U, r}^L := Z_{G, \phi, U, r} \cap L_r = \{g \in L_r; g\i F(g) \in F(\CI_{G, \phi, U, r} \cap L_r)\},\] which also admits a natural action of $L_r^F \times T_r^F$ by left/right multiplication. The isotropic subspace \[\CR_{T, U \cap L, r}^{L \subseteq G}(\phi) := H_c^*(Z_{\phi, U, r}^L, \ov \BQ_\ell)[\phi|_{T_r^F}]\] is a virtual representation of $L_r^F$.
\begin{proposition} \label{Levi-CR}
    We have $\CR_{T, U \cap L, r}^L(\phi) \cong \CR_{T, U \cap L, r}^{L \subseteq G}(\phi)$ as virtual representations of $L_r^F$.
\end{proposition}
\begin{proof}
    By Proposition \ref{natural} we have \begin{align*} \CR_{T, U \cap L, r}^L(\phi) &= H_c^*(Z_{L, \phi, U \cap L, r} / E_{L, \phi, r}, \ov\BQ_\ell)[\phi|_{T_r^F}]; \\ \CR_{T, U \cap L, r}^{L \subseteq G}(\phi) &= H_c^*(Z_{\phi, U, r}^L / (E_{G, \phi, r} \cap L_r), \ov\BQ_\ell)[\phi|_{T_r^F}].\end{align*} By Lemma \ref{Howe-Levi}, the natural morphism \[\pr: Z_{L, U \cap L, r} / E_{L, \phi, r} \to Z_{\phi, U, r}^L / (E_{G, \phi, r} \cap L_r)\] is a $(E_{G, \phi, r} \cap T_r)^F / (E_{L, \phi, r} \cap T_r)^F$-torsor. Thus for any $i \in \BZ$ we have \[H_c^i(Z_{\phi, U, r}^L / (E_{G, \phi, r} \cap L_r), \ov\BQ_\ell) \cong H_c^i(Z_{L, \phi, U \cap L, r} / E_{L, \phi, r}, \ov\BQ_\ell)^{(E_{G, \phi, r} \cap T_r)^F}.\] Since $\phi$ is trivial over $(E_{G, \phi, r} \cap T_r)^F$, it follows that \[H_c^i(Z_{\phi, U, r}^L / (E_{G, \phi, r} \cap L_r), \ov\BQ_\ell)[\phi|_{T_r^F}] \cong H_c^i(Z_{L, \phi, U \cap L, r} / E_{L, \phi, r}, \ov\BQ_\ell)[\phi|_{T_r^F}].\] Hence the statement follows.
\end{proof}

\section{A inner product formula} \label{sec-product}
We keep notations in \S \ref{sec-CR}. Let $S$ be another $k$-rational maximal torus which is conjugate to $T$ by $G_\tx$. We fix two smooth characters $\phi$ and $\psi$ of $T^F$ and $S^F$ respectively. Let $(G^i, \phi_i, r_i)_{-1 \le i \le d_\phi}$ and $(M^i, \psi_i, s_i)_{-1 \le i \le d_\psi}$ be Howe factorizations of $\phi$ and $\psi$ respectively. Fix $r \ge \max\{r_\phi, r_\psi\}$.

Let $(U, \ov U)$ and $(V, \ov V)$ be two pairs of opposite maximal unipotent subgroups of $G$, which are normalized by $T$ and $S$ respectively. Moreover, we may assume that $(M^i)_{0 \le i \le d_\psi}$ is standard with respect to $V$, that is, for each $-1 \le i \le d_\psi$, $M^i$ and $V$ generates a parabolic subgroup $P^i = M^i N^i$ of $G$, where $M^i$ is the Levi part and $N^i \subseteq V$ is the unipotent radical. Note that such $V$ always exists by Lemma \ref{standard-U}.

Let $N_{G_r}(T_r, S_r) = \{x \in G_r; {}^x T_r = S_r\}$ and $W_{G_r}(T_r, S_r) = N_{G_r}(T_r, S_r) / S_r$. The goal of this section is to prove the following inner product formula.
\begin{theorem} \label{CR-R}
Assume that $S$ is elliptic. Then \[\<\CR_{T, U, r}^G(\phi), R_{S, V,  r}^G(\psi)\>_{G_r^F} = \sharp \{{x \in W_{G_r}(T_r, S_r)^F}; {}^x \phi|_{T_r^F} = \psi|_{S_r^F}\}.\]
\end{theorem}

To compute the left hand side of Theorem \ref{CR-R}, we consider the variety \[\Sigma = \{(x, x', y) \in F\CI_{\phi, U, r}\times FV_r \times G_r; x F(y) = y x'\}.\] It admits a $T_r^F \times S_r^F$-action given by $(t, s): (x, x', y) \mapsto (txt\i, s x' s\i, t y s\i)$. We write \[H_c^*(\Sigma, \ov \BQ_\ell)_{\phi, \psi\i} = H_c^*(\Sigma, \ov \BQ_\ell)[\phi|_{T_r^F} \boxtimes \psi|_{S_r^F}\i]\] for the alternating sum of the corresponding isotropic subspaces. Following \cite[\S 6.6]{DL}, we have \[\<\CR_{T, U, r}^G(\phi), R_{S, V,  r}^G(\psi)\>_{G_r^F} = \dim H_c^*(\Sigma, \ov \BQ_\ell)_{\phi, \psi\i}.\] Hence it remains to compute $H_c^*(\Sigma, \ov \BQ_\ell)_{\phi, \psi\i}$.

\subsection{A decomposition of $\Sigma$} \label{subsec:sigma}
Set $M = M^{d_\psi -1}$ and $N = N^{d_\psi - 1}$. Let $\ov N$ be the opposite of $N$. Then \[G_r = \bigsqcup_{w \in M_r \backslash M_r N_{G_r}(T_r, S_r)} G_{w, r},\] where $G_{w, r} = \CZ_{w, r} \dw\i M_r N_r$, $\CZ_{w, r} = U_r \ov U_r^{0+} \cap {}^{\dw\i} \ov N_r$ and $\dw \in M_r N_{G_r}(T_r, S_r)$ is a lift of $w$. This induces a decomposition \[\Sigma = \bigsqcup_{w \in N_{G_r}(T_r, S_r) M_r / M_r} \Sigma_w,\] where $\Sigma_w = \{(x, x', y) \in \Sigma; y \in G_{w, r}\}$. As each $\Sigma_w$ is $T_r^F \times T_r^F$-stable, we have \[H_c^*(\Sigma, \ov \BQ_\ell)_{\phi, \psi\i} = \sum_w H_c^*(\Sigma_w, \ov \BQ_\ell)_{\phi, \psi\i}.\] To study $\Sigma_w$, let \[\hat \Sigma_w = \{(x, x', z, m, n) \in F\CI_{\phi, U, r} \times FV_r \times \CZ_{w, r} \times M_r \times N_r; x F(z \dw\i m n) = z \dw\i m n x'\}.\] It has a $T_r^F \times S_r^F$-action given by \[(t, s): (x, x', z, m, n) \mapsto (txt\i, s x' s\i, t z t\i, w(t) m s\i, sns\i).\] Then the map $\hat \Sigma_w \to \Sigma$ given by $(x, x', z, m, n) \mapsto (x, x', z \dw\i m n)$ is a $T_r^F \times S_r^F$-equivariant affine space fibration. In particular, \[H_c^*(\Sigma_w, \ov \BQ_\ell)_{\phi, \psi\i} \cong H_c^*(\hat \Sigma_w, \ov \BQ_\ell)_{\phi, \psi\i}.\] Since $N \subseteq V$, using the substitution $x'F(n) \mapsto x'$ we can rewrite $\hat\Sigma_w$ as \[\hat \Sigma_w = \{(x, x', z, m, n) \in F\CI_{\phi, U, r} \times FV_r \times \CZ_{w, r} \times M_r \times N_r; x F(z \dw\i m) = z \dw\i m n x'\},\] on which the action of $T_r^F \times S_r^F$ is unchanged.

Write $\hat\Sigma_w = \hat\Sigma_w' \sqcup \hat\Sigma_w''$, where $\hat \Sigma_w', \hat\Sigma''$ are $T_r^F \times S_r^F$-stable locally closed subsets defined by
\begin{align*}
    \hat\Sigma_w' &= \{(x, x', z, m, n) \in \hat\Sigma_w; z \in \CZ_{w,r} \setminus \CI_{\phi, U, r}\}; \\  \hat\Sigma_w'' &= \{(x, x', z, m, n) \in \hat\Sigma_w; z \in \CZ_{w, r} \cap \CI_{\phi, U, r}\}.
\end{align*}
In particular, $H_c^*(\hat \Sigma_w, \ov \BQ_\ell)_{\phi, \psi\i} = H_c^*(\hat \Sigma_w'', \ov \BQ_\ell)_{\phi, \psi\i} + H_c^*(\hat \Sigma_w', \ov \BQ_\ell)_{\phi, \psi\i}$.

\subsection{The first computation}
First we compute $H_c^*(\hat \Sigma_w'', \ov \BQ_\ell)_{\phi, \psi\i}$. The result is as follows.
\begin{lemma} \label{red-Levi}
    We have $H_c^*(\hat\Sigma_w'', \ov \BQ_\ell)_{\phi, \psi\i} \neq 0$ only if $w = F(w)$. In this case, \[\dim H_c^*(\hat\Sigma_w'', \ov\BQ_\ell)_{\phi, \psi\i} = \<\CR_{{}^\dw T, {}^\dw U \cap M, r}^M({}^\dw\phi), R_{S, V \cap M, r}^M(\psi)\>_{M_r^F},\] where $\dw \in G_r^F \cap M_r N_{G_r}(T_r, S_r)$ is a lift of $w$.
\end{lemma}
\begin{proof}
    By the substitution $x F(z) \mapsto x$ we can write $\hat\Sigma_w''$ as \[\hat\Sigma_w'' = \{(x, x', z, m, n) \in F\CI_{\phi, U, r} \times FV_r \times (\CZ_{w, r} \cap \CI_{\phi, U, r}) \times M_r \times N_r; x F(\dw\i m) = z \dw\i m n x'\}.\] Consider the algebraic group \[D_w = \{(t, s) \in T_r \times S_r; F(\dw) t\i F(t) F(\dw)\i = s\i F(s) \in Z(M)_r^\circ\},\] where $T_{r, \red}, S_{r, \red}$ are the reductive subgroups of $T_r, S_r$ respectively. The action of $D_w$ on $\hat\Sigma_w''$ is given by \[(t, s): (x, x', z, m, n) \mapsto (txt\i, sx's\i, tzt\i, \dw t \dw\i m s\i, s n s\i).\] Since the actions of $D_w$ and $T_r^F \times S_r^F$ on $\hat\Sigma_w''$ commute with each other, we have \[H_c^*(\hat\Sigma_w'', \ov \BQ_\ell)_{\phi, \psi\i} \cong H_c^*((\hat\Sigma_w'')^{D_w^\circ}, \ov \BQ_\ell)_{\phi, \psi\i}.\] As $F$ preserves $Z(M)^\circ$, the image of the natural projection $D_w^\circ \to S_{r, \red}$ is $Z(M)_{r, \red}^\circ$.

    Assume $(\hat\Sigma_w'')^{D_w^\circ} \neq \emptyset$, and let $(x, x', z, m, n) \in (\hat\Sigma_w'')^{D_w^\circ}$. Then we have $m = \dw t \dw\i m s\i = \dw t \dw\i s\i m$ for $(t, s) \in D_w^\circ \subseteq T_r \times Z(M)_r^\circ$. This implies that $t = \dw\i s \dw$ and $D_w^\circ = \{(w\i(s), s); s \in Z(M)_r^\circ\}$. Thus $\dw F(\dw)\i \in M_r$ and \[(\hat\Sigma_w'')^{D_w^\circ} \subseteq {}^{\dw\i} M_r \times M_r \times \{1\} \times M_r \times \{1\}.\] So we may assume $\dw = F(\dw)$ and deduce that \begin{align*}(\hat\Sigma_w'')^{D_w^\circ} &= \{(x, x', m) \in (F\CI_{\phi, U, r} \cap {}^{\dw\i} M_r) \times (FV_r \cap M_r) \times M_r; x \dw\i F(m) = \dw\i m x'\} \\ &\cong \{(x, x', m) \in (F({}^\dw\CI_{\phi, U, r} \cap M_r) \times F(V_r \cap M_r) \times M_r; x F(m) = m x'\}.\end{align*}
    Noticing that ${}^\dw\CI_{\phi, U, r} = \CI_{{}^\dw U, {}^\dw\phi, r}$, we have \begin{align*} \dim H_c^*((\hat\Sigma_w'')^{D_w^\circ}, \ov\BQ_\ell)_{\phi, \psi\i} &= \<\CR_{{}^\dw T, {}^\dw U, r}^{M \subseteq G}({}^\dw \phi), R_{T, V \cap M, r}^M(\psi)\>_{M_r^F} \\ &=\<\CR_{{}^\dw T, {}^\dw U \cap M, r}^M({}^\dw \phi), R_{T, V \cap M, r}^M(\psi)\>_{M_r^F},\end{align*} where the first equality follows from the definition of $\CR_{T^\dw, U^\dw, r}^{M \subseteq G}(\phi^\dw)$ in \S \ref{subsec:Levi}, and the second one follows from Proposition \ref{Levi-CR}. The proof is finished.
\end{proof}

\subsection{A vanishing result} \label{subsec: tech} Now we compute $H_c^*(\hat \Sigma_w'', \ov \BQ_\ell)_{\phi, \psi\i}$ following the strategies of \cite{L04} and \cite{CS17}.

Let $\Phi = \Phi(G, T)$ and $\Psi = \Phi(G, S)$. Let $\Psi_{\ov N} \subseteq \Psi$ be the set of roots appearing in $\ov N$. Let $\Phi^+$ be the set of (positive) roots appearing in $U$. Let $\preceq$ be the dominance order on $\Phi$ induced from $\Phi^+$, namely, $\a \preceq \b$ if and only if $\b - \a$ is a sum of roots in $\Phi^+$. Fix $\dw \in N_r(T_r, S_r)$ and set $\D_w = {}^{\dw\i} \Psi_{\ov N} \subseteq \Phi$.

Recall that $(G^i, \phi_i, r_i)_{-1 \le i \le d_\phi}$ is a Howe factorization of $\phi$. Let $\Phi_i = \Phi(G^i, T)$ for $0 \le i \le d_\phi$. Let $\g \in \Phi$. Let $i(\g) = i^\phi(\g)$ and $r(\g) = r^\phi(\g)$ be defined as in \S\ref{subsec:Howe}. Then we have \[\CZ_{w, r} = \prod_{\g \in \D_w} (G^\g)_r^{\varepsilon(\g)} \text{ and } \CZ_{w,r} \cap \CI_{\phi, U, r} = \prod_{\g \in \D_w} (G^\g)_r^{\e(\g)},\] where $\varepsilon(\g) = 0$, $\e(\g) = \e^\phi(\g) = r(\g)/2$ if $\g \in \Phi^+$ and $\varepsilon(\g) = 0+$, $\e(\g) = \e^\phi(\g) = r(\g)/2+$ otherwise. There is a decomposition \[ \CZ_{w, r} \setminus \CI_{\phi, U, r} = \bigsqcup_{\emptyset \neq I \subseteq \D_w,~ v \in \BR_{\ge 0}^{\D_w}}  \ \CZ_{w, r}^{I, v},\] where $\CZ_{w, r}^{I, v}$ consists of elements $z = \prod_{\g \in \D_w} z_\g$ such that $z_\g \in (G^\g)_r^{v(\g), *}$ with $\varepsilon(\g) \le v(\g) < \e(\g)$ for $\g \in I$ and $x_\g \in (G^\g)_r^{v(\g)}$ with $v(\g) = \e(\g)$ otherwise. Here $(G^\g)_r^{r', *} = (G^\g)_r^{r'} \setminus (G^\g)_r^{r'+}$ for $0 \le r' \le r$.

Let $\emptyset \neq I \subseteq \D_w$ and $v \in \BR_{\ge 0}^I$. Set $\d(I, v) = \max \{r(\g) - v(\g); \g \in I\}$ and \[c(I, v) = \{\g \in I; r(\g) - v(\g) = \d(I, v)\}.\]
\begin{lemma} \label{commutator}
Let $I, v$ be as above. Let $\a \in \min_{\preceq} c(I, v)$. Let $y \in \CZ_{w, r}^{I, v}$ and let $\z \in (G^{-\a})_r^{r(\a) - v(\a)}$. Then there exist $\o_{y, \z} \in \CI_{\phi, U, r}$, and $\t_{\z, y} \in (T^\a)_r^{r(\a)}$ such that \[y \z = \o_{\z, y} \t_{\z, y} y.\] Moreover, the map $\z \mapsto \t_{\z, y}$ induces an isomorphism of algebraic groups $\l_y: (G^{-\a})_r^{r(\a) - v(\a)} / (G^{-\a})_r^{(r(\a) - v(\a))+} \overset \sim \longrightarrow (T^\a)_r^{r(\a)} / (T^\a)_r^{r(\a)+}$.
\end{lemma}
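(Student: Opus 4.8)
The plan is to reduce the commutation computation to the rank-one group generated by $G^{\a}$ and $G^{-\a}$, then track Moy--Prasad filtration depths using the genericity built into the Howe factorization. First I would fix $y \in \CK_r^{I,v}$ and write $y = \prod_{\g\in I} x_\g^y$ with $x_\g^y \in (G^\g)_r^{v(\g),*}\setminus\CI_{\phi,U,r}$, ordered so that the distinguished factor $x_\a^y$ (with $\a \in \min_\preceq c(I,v)$) comes last. Commuting $\z \in (G^{-\a})_r^{r(\a)-v(\a)}$ past the factor $x_\a^y \in (G^\a)_r^{v(\a),*}$ produces, by the Chevalley commutator formula applied inside the rank-one subgroup, a term in $(T^\a)_r$ and higher-order error terms in $(G^{\pm\a})_r$ of strictly larger depth. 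The depth bookkeeping gives: $[x_\a^y,\z]$ lands in $(T^\a)_r^{v(\a)+(r(\a)-v(\a))} = (T^\a)_r^{r(\a)}$, and the nonabelian error terms (and the commutators of $\z$ with the remaining factors $x_\g^y$, $\g\neq\a$) lie in filtration steps that, by the definition of $\d(I,v)$ and the minimality of $\a$ in $c(I,v)$, are absorbed into $\CI_{\phi,U,r}$. This yields the displayed identity $y\z = \o_{\z,y}\,\t_{\z,y}\,y$ with $\o_{\z,y}\in\CI_{\phi,U,r}$ and $\t_{\z,y}\in(T^\a)_r^{r(\a)}$.

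For the "moreover" part, I would show the induced map on graded pieces is an isomorphism. The map $\z\mapsto\t_{\z,y}$ factors, modulo $(G^{-\a})_r^{(r(\a)-v(\a))+}$, through the leading term of the commutator $[x_\a^y,\z]$, which by the Chevalley formula is (up to sign and the constant $N_{\a,-\a}$) the image of $\z$ under the coroot pairing landing in $(T^\a)_r^{r(\a)}/(T^\a)_r^{r(\a)+}$. Since $p$ is not bad for $G$ and does not divide $|\pi_1(G_\der)|$ (our standing assumption from \S\ref{sec-R}), the relevant structure constant is a unit and the coroot map $(G^{-\a})^{r(\a)-v(\a)}/(G^{-\a})^{(r(\a)-v(\a))+} \to (T^\a)^{r(\a)}/(T^\a)^{r(\a)+}$ between these one-dimensional $\BF$-vector groups is an isomorphism; crucially $x_\a^y \notin (G^\a)_r^{v(\a)+}$, so its image in the graded piece is nonzero and the pairing is nondegenerate. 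This gives the claimed $\l_y$.

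The main obstacle I anticipate is the depth bookkeeping for the error terms, i.e.\ verifying that everything except the distinguished $(T^\a)_r^{r(\a)}$-term genuinely lands in $\CI_{\phi,U,r} = (K_\phi\cap U)(E_\phi\cap T)(K_\phi^+\cap\ov U)$. This requires carefully matching the commutator depths against the three factors of $\CI_{\phi,U,r}$: the point of choosing $\a$ minimal in $c(I,v)$ (and $\g$ with $r(\g)-v(\g)=\d(I,v)$) is precisely that for the other indices $\g\in I$ the quantity $r(\g)-v(\g)$ is strictly smaller, forcing $x_\g^y\cdot$(something in depth $\geq r(\g)-v(\g)$ on the $-\a$ side) into higher total depth, hence into $K_\phi^+\cap\ov U$ or $E_\phi\cap T$; and the $\preceq$-minimality of $\a$ ensures the positive-root error terms generated by the commutator formula (which involve roots $\succ\a$) sit in the correct $U$-filtration level to be absorbed into $K_\phi\cap U$. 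I would organize this as a short case analysis on whether an error root is positive, negative, or the torus contribution, invoking the definitions of $K_\phi$, $K_\phi^+$ and $E_\phi$ in terms of the $(G^i_\der)_\tx^{r_{i-1}+,\,r_{i-1}/2+}$ at each step. The genericity of the $\phi_i$ does not enter this particular lemma directly — it will be used downstream — so the argument here is essentially a filtration-depth estimate inside a rank-one subgroup plus the standard Chevalley commutator relations.
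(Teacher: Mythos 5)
Your high-level plan---reduce to commutator estimates inside the Moy--Prasad filtration, extract the leading $(T^\a)_r^{r(\a)}$ term from the Chevalley commutator of $\z$ with the $\a$-factor of $y$, and use that $p$ is not bad so that the coroot pairing on graded pieces is an isomorphism---matches the paper's strategy, and you are right that genericity of the $\phi_i$ does not enter this lemma.

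However, your depth bookkeeping contains a material error. You assert that ``for the other indices $\g\in I$ the quantity $r(\g)-v(\g)$ is strictly smaller'' than $r(\a)-v(\a)$. This is false whenever $c(I,v)$ has more than one element: for $\g\in c(I,v)\setminus\{\a\}$ one has $r(\g)-v(\g)=\d(I,v)=r(\a)-v(\a)$ exactly, and $\a$ is singled out only by $\preceq$-minimality, not by a strict depth gap. Your argument leans on that strict inequality to push commutators of $\z$ with the other factors of $y$ into strictly higher filtration steps; once it fails, an iterated commutator can a priori contribute another torus term at the critical depth $r(\a)$, which sits exactly on the boundary of $\CI_{\phi,U,r}$ and would pollute $\t_{\z,y}$. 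The paper's proof instead enumerates the terms $x_{\b,m}$ of $y\z y\i$, shows via a chain of reductions that any such term escaping $\CI_{\phi,U,r}$ must have $n=1$, $m=r(\a)$, $i(\g_1)=i(\a)$, $-n\a+\sum_i\g_i=0$ and $v(\g_i)=0$ for $i\ge 2$, so that $\a-\g_1\in\BZ_{\ge 0}\Phi^+$ with $\g_1\in c(I,v)$, and only then invokes $\preceq$-minimality of $\a$ to force $\g_1=\a$ and $e=1$. You also misattribute the role of $\preceq$-minimality: it is not there to control ``positive-root error terms'' (which are not generally $\succ\a$, and are killed by the filtration facts (a), (a$'$) alone); it is used solely to eliminate competing torus contributions coming from other elements of $c(I,v)$. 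So the approach is correct in outline, but the key case analysis is wrong as stated and has to be redone along the lines above.
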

\begin{proof}
First we note that
\[\tag{a} \text{$(G^\g)_r^s \subseteq \CI_{\phi, U, r}$ $\iff$ $\g \in \Phi^+$, $s \ge r(\g)/2$ or $-\g \in \Phi^+$, $s > r(\g)/2$.}\]
Since $\varepsilon(\g) \le v(\g)$ for $\g \in \D_w$, we have
\[\tag{a'} \text{ $v(\g) \ge 0$ if $\g \in \Phi^+$ and $v(\g) > 0$ otherwise.}\]
Moreover, as $\varepsilon(\g) \le v(\g) < \e(\g)$ for $\g \in I$, if follows that
\[\tag{b} \text{ $r(\g) > 0$, $v(\g) \le r(\g)/2$ and $(G^{-\g})_r^{r(\g) - v(\g)} \subseteq \CI_{\phi, U, r}$ for $\g \in I$.} \]

Note that $y \z y\i$ is a product of elements $x = x_{\b, h, n, \overset \rightarrow \g}$, where $\b \in \Phi$, $h \in \BR_{>0}$, $n \in \BZ_{\ge 1}$ and $ \overset \rightarrow \g = (\g_i)_{1 \le i \le m}$ is a sequence of roots in $\D_w$ such that
\begin{itemize}
    \item $i(\g_1) \ge i(\g_2) \ge \cdots \ge i(\g_m)$;

    \item $h = n(r(\a) - v(\a)) + \sum_{i=1}^m v(\g_i)$;

    \item either $x \in (G^\b)_r^h$ with $\b = -n\a + \sum_{i=1}^m \g_i \in \Phi$ or $-n\a + \sum_{i=1}^m \g_i = 0$ and $x \in (T^\b)_r^h$ with $\b = \g_{i_0}$ for some $1 \le i_0 \le m$.
\end{itemize}

To show the first statement, it suffices to show $x \in \CI_{\phi, U, r}$ unless $n = 1$, $m = 1$ and $\g_1 = \a$. Suppose that $x \notin \CI_{\phi, U, r}$. We show it will leads to a contradiction.

Note that
\[\tag{c} \text{ $v(\g) > 0$ if $i(\a) < i(\g)$.} \]
Indeed, assume $i(\a) < i(\g)$, then $r(\g) > r(\a) > 0$ by (b). If $\g \in I$, as $\a \in \d(I, v)$ we have $r(\g) - v(\g) \le r(\a) - v(\a)$ and hence $v(\g) > v(\a) \ge 0$. Otherwise, by (a) we have $v(\g) \ge r(\g)/2 > r(\a)/2 > 0$. Hence (c) always holds.

Now we claim that
\[\tag{d} \text{ $i(\g_1) \le i(\a)$ and hence $\b \in \Phi_{i(\a)}$.} \]
Indeed, assume that $i(\g_1) > i(\a)$. Then $r(\g_1) > 0$ (by (c)) and $\b \in \Phi_{i(\g_1)}$. If $\g_1 \in I$, we have \[h \ge r(\a) - v(\a) + v(\g_1) \ge r(\g_1) - v(\g_1) + v(\g_1) = r(\g_1).\] Otherwise, by (a) and (b) we have \[h \ge r(\a) - v(\a) + r(\g_1)/2 > r(\a)/2 + r(\g_1)/2.\] In a word, $h \ge \e(\g_1)$. Since $x \notin \CI_{\phi, U, r}$, we have $-n\a + \sum_{i=1}^e \g_i = 0$ and $x \in (T^\b)_r^h$. In particular, $m \ge 2$ and $i(\g_2) = i(\g_1) > i(\a)$. Hence by previous computation we always have \[h \ge (r(\a) - v(\g)) + v(\g_1) + v(\g_2) >  r(\g_1),\] This means $x \in \CI_{\phi, U, r}$, a contradiction. So (d) is proved.

Then we claim that $n=1$. Indeed, assume $n \ge 2$. By (b) we have $h \ge 2(r(\a) - v(\a)) \ge r(\a)$. As $x \notin \CI_{\phi, U, r}$, we have $h = r(\a)$, $-n\a + \sum_{i=1}^m \g_i = 0$ and $x \in (T^\b)_r^h$. By (a), (a') and (b) this means that $n = 2$, $r(\a) - v(\a)  = r(\a)/2$ and $v(\g_i) = 0$ for $1 \le i \le m$. Hence $-\a, \g_i \in \Phi^+$, contradicting that $-n\a + \sum_{i=1}^m \g_i = 0$.

Third we claim that $i(\g_1) = i(\a)$. Assume otherwise, by (d) we have $i(\g_i) < i(\a)$ for $1 \le i \le m$. Moreover, as $n=1$ we have $\b = -\a + \sum_{i=1}^m \g_i \in \Phi_{i(\a)}$ and $x \in (G^\b)_r^h$. By (b) and (a) and (a') we have either $h > r(\a)/2$ or $h = r(\a)/2$ and $\b \in \Phi^+$. In either case we have $x \in \CI_{\phi, U, r}$, a contradiction.

Since $i(\g_1) = i(\a)$, by the proof of (d) we always have \[h \ge (r(\a) - v(\a)) + v(\g_1) \ge r(\g_1) = r(\a).\] Again, since $\b \in \Phi_{i(\a)}$ and $x \notin \CI_{\phi, U, r}$, we have $h = r(\a)$ and $-n\a + \sum_{i=1}^m \g_i = 0$. In particular, $v(\g_i) = 0$ and hence $\g_i \in \Phi^+$ for $2 \le i \le m$. Assume $\g_1 \notin I$. Then $v(\g_1) \ge r(\g_1)/2 = r(\a)/2$, which implies that $v(\g_1) = v(\a) = r(\a)/2$ (since $r(\a) - v(\a) \ge r(\a)/2$) and hence $-\a, \g_1 \in \Phi^+$. This contradicts that $-n\a + \sum_{i=1}^m \g_i = 0$. So we have $\g_1 \in c(I, v)$. Note that \[\a - \g_1 = \sum_{i=2}^m \g_i \in \BZ_{\ge 0} \Phi^+.\] As $\a \in \min_{\preceq} c(I, v)$, it follows that $\g_1 = \a$ and $m = 1$. So the first statement is proved. The second one follows by a direct computation.
\end{proof}

\begin{lemma} \label{vanish}
    We have $H_c^*(\hat\Sigma_w', \overline \BQ_\ell)_{\phi, \psi\i} = 0$.
\end{lemma}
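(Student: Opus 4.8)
The plan is to adapt the standard vanishing argument of \cite{L04} (see also \cite{CS17}), with Lemma~\ref{commutator} as the key geometric input. First I would stratify $\hat\Sigma_w'$ using the decomposition $\CZ_{w, r} \setminus \CI_{\phi, U, r} = \bigsqcup_{I, v} (\CZ_{w, r} \cap \CI_{\phi, U, r}) \cdot \CK_r^{I, v}$ recalled above: for each $(I, v)$ let $\hat\Sigma_w^{I, v} \subseteq \hat\Sigma_w'$ be the locally closed subvariety cut out by requiring the $z$-coordinate to lie in $(\CZ_{w, r} \cap \CI_{\phi, U, r}) \cdot \CK_r^{I, v}$, so that $\hat\Sigma_w' = \bigsqcup_{I, v} \hat\Sigma_w^{I, v}$ is a finite disjoint union. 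Conjugation by $T_r$ preserves each root subgroup $(G^\g)_r$ together with its Moy--Prasad filtration, hence preserves the pair $(I, v)$; so each $\hat\Sigma_w^{I, v}$ is stable under the $T_r^F \times S_r^F$-action, and in the Grothendieck group of $T_r^F \times S_r^F$-modules $H_c^*(\hat\Sigma_w', \ov\BQ_\ell)_{\phi, \psi\i} = \sum_{I, v} H_c^*(\hat\Sigma_w^{I, v}, \ov\BQ_\ell)_{\phi, \psi\i}$. Thus it is enough to show each summand vanishes, and in any surviving term $I \neq \emptyset$ because $z \notin \CI_{\phi, U, r}$.

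Fix such a pair $(I, v)$ and pick a minimal root $\a \in \min_{\preceq} c(I, v)$ as permitted by Lemma~\ref{commutator}; on $\hat\Sigma_w^{I, v}$ write $z = z_0 y$ with $z_0 \in \CZ_{w, r} \cap \CI_{\phi, U, r}$ and $y \in \CK_r^{I, v}$. Using Lemma~\ref{commutator} I would equip $\hat\Sigma_w^{I, v}$ with an action of the connected group $\ov A := (G^{-\a})_r^{r(\a) - v(\a)} / (G^{-\a})_r^{(r(\a) - v(\a))+}$: for a lift $\z \in (G^{-\a})_r^{r(\a) - v(\a)} \subseteq \CI_{\phi, U, r}$ of $\bar\z \in \ov A$, right-translate $y$ by $\z$, and use the identity $y\z = \o_{\z, y}\t_{\z, y}y$ to reabsorb the factor $\o_{\z, y} \in \CI_{\phi, U, r}$ and carry the factor $\t_{\z, y} \in (T^\a)_r^{r(\a)}$ by compensating moves on the coordinates $x$, $x'$, so that the defining equation $xF(z\dw m) = z\dw m nx'$ continues to hold; by the last assertion of Lemma~\ref{commutator} this descends to a well-defined $\ov A$-action, commuting with the $S_r^F$-action. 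Since $\ov A$ is connected it acts trivially on $H_c^*(\hat\Sigma_w^{I, v}, \ov\BQ_\ell)$, and hence so does $\ov A^F$. On the other hand, following the transferred factor $\t_{\z, y}$ through the $T_r^F$-action and the isomorphism $\l_y$ shows that on the $(\phi, \psi\i)$-isotypic subspace this $\ov A^F$-action is implemented by the character of $\ov A^F$ obtained, via $\l_y$ and a suitable norm $N_F^{F^n}$, from the restriction of $\phi$ to the depth-$r(\a)$ slice of $T^\a$ --- equivalently, from the Howe factor $\phi_{i(\a) - 1}$, as $r(\a) = r_{i(\a) - 1}$. Because $\a \in \Phi_{i(\a)} \setminus \Phi_{i(\a) - 1}$, the $(G^{i(\a) - 1}, G^{i(\a)})$-genericity of $\phi_{i(\a) - 1}$ makes this character nontrivial. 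An isotypic subspace that is simultaneously $\ov A^F$-fixed and transforms by a nontrivial character of $\ov A^F$ must vanish; summing over $(I, v)$ gives the lemma.

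The step I expect to be the main obstacle is making the $\ov A$-action of the second paragraph precise and verifying the asserted formula for its effect on the isotypic cohomology. Concretely, after right-translating $y$ by $\z$ one must check that the factors supplied by Lemma~\ref{commutator} can be distributed among the auxiliary coordinates so that (i) the defining equation of $\hat\Sigma_w$ is preserved, (ii) the construction is compatible with $F$ and yields a genuine action of the finite group $\ov A^F$, and (iii) along the relevant direction the contributions of the Howe factors $\phi_i$ with $i \neq i(\a) - 1$ --- which need not be trivial there --- are cancelled by the adjustments inside $\CI_{\phi, U, r}$, leaving exactly the generic factor $\phi_{i(\a) - 1}$. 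This bookkeeping, together with the precise shape of $\CI_{\phi, U, r}$ and the genericity input (via \cite[Lemma 3.6.8]{Kal}), is where the assumption on $\phi$ is genuinely used, just as in \cite{L04} and \cite{CS17}.
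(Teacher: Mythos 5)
Your high-level strategy matches the paper's: stratify $\hat\Sigma_w'$ by the pieces $\hat\Sigma_w^{I,v}$, use Lemma~\ref{commutator} to produce a torus direction $(T^\a)_r^{r(\a)}$ on which the Howe factor $\phi_{i(\a)-1}$ is generic, and run a connected-group homotopy argument against the $(\phi,\psi\i)$-isotypy. The gap is precisely where you flagged it, in the construction of the connected group action, and as stated the step fails. You propose to act by $\ov A = (G^{-\a})_r^{r(\a)-v(\a)}/(G^{-\a})_r^{(r(\a)-v(\a))+}$ through right-translation $y \mapsto y\z$; but $y\z = \o_{\z,y}\t_{\z,y}y$ leaves a genuine torus factor $\t_{\z,y}\in (T^\a)_r^{r(\a)}$, and once $\o_{\z,y}$ is absorbed into the $\CI_{\phi,U,r}$-part there is no place for $\t_{\z,y}$ to go: $\CK_r^{I,v}$ is purely unipotent, while the coordinates $x\in F\CI_{\phi,U,r}$ and $x'\in FV_r$ are not stable under multiplication or conjugation by torus elements that are not $F$-fixed. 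So the ``compensating moves'' do not produce a self-map of $\hat\Sigma_w^{I,v}$, let alone a group action of $\ov A$ commuting with the isotypy, and the simultaneous ``trivial by connectivity'' and ``nontrivial by genericity'' conclusion you want has no well-defined object to apply to.

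The paper fixes this by changing the parameter group: it uses the Lang-type slice $\CH = \{t\in T_r^{r(\a)};\ t\i F\i(t)\in (T^\a)_r^{r(\a)}\}$, a subgroup of the abelian group $T_r^{r(\a)}$, and for $t\in\CH$ defines $f_t$ by $t$-conjugation on $z_1$ and $y$, left $\dw\i(t)$-multiplication on $m$, and corrections of $x$ and $x'$ using exactly the $\z = \vartheta_1\l_y\i\th_2(tF\i(t)\i)$ supplied by Lemma~\ref{commutator}; the correction of $x'$ is right-multiplication by $F(\z^{\dw m})$, which stays in $FV_r$ because $\dw\i\z\dw\in N_r$. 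Conjugation is well-defined for all $t\in T_r$ (not just $F$-fixed ones), so this gives a genuine connected family with $f_1=\id$, and for $t$ in the norm subgroup $N_F^{F^n}(((T^\a)_r^{r(\a)})^{F^n})\subseteq T_r^F\cap\CH^\circ$ one has $\z=1$ and $f_t$ literally coincides with the $T_r^F$-action of $t$. Triviality of $f_t^*$ then forces $\phi(t)=1$ on the $\phi$-isotypic part, contradicting genericity. In short, the repair to your plan is to parametrize by $\CH$ and conjugate, rather than by $\ov A$ and translate.
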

\begin{proof}
We have a decomposition \[\hat\Sigma_w' = \bigsqcup_{I \subseteq \D_w,~ v \in \BR_{\ge 0}^{\D_w}} \hat\Sigma_w^{I, v},\] where $\hat\Sigma_w^{I, v}$ consists of $(x, x', y, m, n) \in \hat\Sigma_w'$ with $y \in \CZ_{w, r}^{I, v}$ such that \[ x F(y \dw\i m) = y \dw\i m n x'.\] Then action of $T_r^F \cong T_r^F \times \{1\} \subseteq T_r^F \times S_r^F$ on $\hat\Sigma_w^{I, v}$ is given by \[t: (x, x', y, m, n) \mapsto (txt\i, x', t y t\i, \dw t \dw\i m, n).\]

It suffices to show the $\phi$-isotropic subspace $H_c^*(\hat\Sigma_w^{I, v}, \ov \BQ_\ell)_\phi$ for $T_r^F$ is trivial. Let $\a \in \min_{\preceq} c(I, v)$. By Lemma \ref{commutator}, for $y \in \CZ_{w, r}^{I, v}$ and $\z \in (G^{-\a})_r^{r(\a) - v(\a)}$, there exist $\t_{\z, y} \in (T^\a)_r^{r(\a)}$ and $\o_{\z, y} \in \CI_{\phi, U, r}$ such that \[y \z = \o_{\z, y} \t_{\z, y} y.\] Consider the natural quotient maps \begin{gather*}\th_1: (G^{-\a})^{r(\a) - v(\a)} \to (G^{-\a})_r^{r(\a) - v(\a)} / (G^{-\a})_r^{(r(\a) - v(\a))+}; \\ \th_2: (T^\a)_r^{r(\a)} \to (T^\a)_r^{r(\a)} / (T^\a)_r^{r(\a)+}. \end{gather*} Let $\vartheta_1$ be a section of $\th_1$ such that $\th_1 \circ \vartheta_1 = \id$ and $\vartheta_1(1) = 1$.

Consider the following algebraic group \[\CD = \{t \in T_r^{r(\a)}; t\i F\i(t) \in (T^\a)_r^{r(\a)}\}.\] For $t \in \CD$ we define an isomorphism $f_t: \hat\Sigma_w^{I, v} \to \hat\Sigma_w^{I, v}$ by \[f_t(x, x', y, m, n) = (x_t, x_t', y_t, m_t, n_t) := (x_t, x' F({}^{m\i \dw} \z), t y t\i, \dw t \dw\i m, n),\] where $\z = \vartheta_1\l_y\i \th_2 (t F\i(t)\i) \in (G^{-\a})_r^{r(\a) - v(\a)}$, $\l_y$ is as in Lemma \ref{commutator}, and $x_t \in F\CI_{\phi, U, r}$ is determined by the equality \[x_t F(y_t \dw\i m_t) = y_t \dw\i m_t n_t x_t'.\] By Lemma \ref{commutator} and that ${}^\dw \z \in N_r$ one checks that $f_t$ is well defined.

By general principle, the induced map of $f_t$ on each $H_c^i(\hat\Sigma_w^{I, v}, \ov\BQ_\ell)$ is trivial for $t \in N_F^{F^n}(((T^\a)_r^{r(\a)})^{F^n}) \subseteq \CD^\circ$, where $n \in \BZ_{\ge 1}$ such that $F^n(T^\a) = T^\a$. On the other hand, as $\a \in I$ we have $i(\a) \ge 1$ and \[\phi|_{N_F^{F^n}(((T^\a)_r^{r(\a)})^{F^N})} = \phi_{i(\a)-1}|_{N_F^{F^n}(((T^\a)_r^{r(\a)})^{F^n}) },\] which is nontrivial since $\phi_{i(\a)-1}$ is $(G_{i(\a)-1}, G_{i(\a)})$-generic. Thus it follows that $H_c^*(\hat\Sigma_w^{I, v}, \ov\BQ_\ell)_\phi = 0$ as desired.
\end{proof}

\subsection{End of the proof} Now we are ready to show the main result of this section.
\begin{proof} [Proof of Theorem \ref{CR-R}]
By \cite[Proposition 3.7]{Ch24} and Proposition \ref{ass-CR} we have \[\<\CR_{T, U, r}^G(\phi), R_{S, V,  r}^G(\psi)\>_{G_r^F} = \<\CR_{T, U, r}^G(\phi \otimes \psi_{d_\psi}\i|_{T^F}), R_{S, V,  r}^G(\psi \otimes \psi_{d_\psi}\i|_{S^F})\>_{G_r^F}.\] Thus by replacing $\phi$ and $\psi$ with $\phi \otimes \psi_{d_\psi}\i|_{T^F}$ and $\psi \otimes \psi_{d_\psi}\i|_{S^F}$ respectively, we can assume further that $\psi$ has depth $s_{d_\psi-1}$.

We argue by induction on $d_\psi$ and the semisimple rank of $G$. If $G = T$, then $X_{U, r} = Z_{\phi, U, r} = T_r^F$ and the statement is trivial. Suppose that $d_\psi = 0$, then $\psi$ has depth $r_\psi = s_{-1} = 0$. Moreover, as $S$ is elliptic, it follows from Theorem \ref{degenracy} that \[R_{S, V, r}^G(\psi) \cong R_{S, V, 0}^G(\psi).\] In particular, $R_{S, V, r}^G(\psi)$ is a linear combination of irreducible $G_r^F$-modules on which $(G_r^{0+})^F$ acts trivially. Now we first assume that $r_\phi > 0$. By Corollary \ref{restriction}, $\CR_{T, U, r}^G(\phi)$ is a linear combination of irreducible $G_r^F$-modules on which $(G_r^{r_\phi})^F$ acts via nontrivial characters $({}^\g \phi^\natural) |_{(G_r^{r_\phi})^F}$ for $\g \in G_r^F$. Thus \[\<\CR_{T, U, r}^G(\phi), R_{S, V, r}^G(\psi)\>_{G_r^F} = 0 = \sharp \{{x \in W_{G_r}(T_r, S_r)^F}; {}^x\phi = \psi\}\] as desired. Now assume that $r_\phi = 0$. Then $Z_{U, \phi, 0} = X_{U, 0}$ and it follows from Proposition \ref{inf-CR} that \[\CR_{T, U, r}^G(\phi) = \CR_{T, U, 0}^G(\phi) = R_{T, U, 0}^G(\phi).\] By \cite[Theorem 6.8]{DL} we have \begin{align*}\<\CR_{T, U, r}^G(\phi), R_{S, V, r}^G(\psi)\>_{G_r^F} &= \sharp \{{x \in W_{G_0}(T_0, S_0)^F}; {}^x \phi|_{T_0^F} = \psi |_{S_0^F}\} \\ &= \sharp \{{x \in W_{G_r}(T_r, S_r)^F}; {}^x \phi|_{T_r^F} = \psi|_{S_r^F}\}.\end{align*} So the statement is true when $r_\psi = 0$.

Suppose that $d_\psi \ge 1$. Let $M = M^{d_\psi - 1}$. Then \begin{align*} &\quad\ \<\CR_{T, U, r}^G(\phi), R_{S, V, r}^G(\psi)\>_{G_r^F} \\ &= \sum_{w \in M_r \backslash M_r N_{G_r}(T_r, S_r)} \dim H_c^*(\hat \Sigma_w, \ov\BQ_\ell)_{\phi, \psi\i}  \\ &=\sum_{w \in (M_r \backslash M_r N_{G_r}(T_r, S_r))^F} \<\CR_{{}^\dw T, {}^\dw U \cap M, r}^M({}^\dw \phi), R_{S, V \cap M, r}^M(\psi)\>_{M_r^F} \\ &= \sum_{w \in (M_r \backslash M_r N_{G_r}(T_r, S_r))^F} \sharp\{u \in W_{M_r}(T_r^\dw, S_r)^F; {}^{\dot u \dw} \phi |_{T_r^F} = \psi |_{S_r^F}\} \\ &= \sharp \{{x \in W_{G_r}(T_r, S_r)^F}; {}^x \phi |_{T_r^F} = \psi |_{S_r^F}\}, \end{align*} where the second equality follows from Lemma \ref{red-Levi} and Lemma \ref{vanish}, and the third one follows by induction hypothesis for $M$. The proof is finished.
\end{proof}

\section{Coincidence of the two representations} \label{sec-algebraic}
Let $\tx$, $T$, $\phi$, $r_\phi$, $(G^i, \phi_i, r_i)_{-1 \le i \le d_\phi}$ and $(U, \ov U)$ be as in \S\ref{sec-product}. Assume that $(G^i)_{0 \le i \le d_\phi}$ is standard with respect to $U$. Let $r \ge r_\phi$ and let $K_{\phi, r}$, $H_{\phi, r}$, $K_{\phi, r}^+$ and $E_{\phi, r}$ be as in \S \ref{sec-CR}. Let $\Phi = \Phi(G, T)$ and denote by $\Phi^+ = - \Phi^-$ the set of roots in $\Phi$ appearing in $U$. Set $L = G^0$.

In this section, we compute the self inner product $\<\CR_{T, U, r}^G(\phi), \CR_{T, U, r}^G(\phi)\>_{G_r^F}$ and show that $\CR_{T, U, r}^G(\phi) = \CR_{T, U, r}^G(\phi)$.

\subsection{The representation $H_c^*(Z_{\phi, U, r}^K, \ov\BQ_\ell)[\phi]$} Let $Z_{\phi, U, r}^K = Z_{\phi, U, r} \cap K_{\phi, r}$ be defined in \S \ref{subsec:CR}. We show that the self inner product of $\CR_{T, U, r}^G(\phi)$ equals to the self inner product of $K_{\phi, r}^F$-representation \[H_c^*(Z_{\phi, U, r}^K, \ov\BQ_\ell)[\phi] := H_c^*(Z_{\phi, U, r}^K, \ov\BQ_\ell)[\phi|_{T_r^F}].\]

For $0 \le i \le d_\phi$ let $\hat \phi_i$ be the character of $(G^i)_\tx^F (G_\tx^{r_i/2+})^F$ defined in \cite[\S 4]{Yu}.
\begin{lemma} \label{intertwine-i}
    Let $0 \le i \le d_\phi-1$ and $g \in (G^{i+1})_\tx^F$. If $g$ intertwines $\hat \phi_i |_{((G^{i+1})_\tx^{r_i, r_i/2+})^F}$, then $g \in ((G^{i+1})_\tx^{r_i/2})^F (G^i_\tx)^F ((G^{i+1})_\tx^{r_i/2})^F$.
\end{lemma}
\begin{proof}
     Note that $(G^{i+1})_\tx \cap G^i = (G^i)_\tx$ since $T \subseteq G^i$ is an unramified maximal torus. Then the statement follows from \cite[Theorem 9.4]{Yu}.
\end{proof}

\begin{lemma} \label{intertwine}
    Let $\rho, \rho'$ be two irreducible summands of $H_c^*(Z_{\phi, U, r}^K, \ov \BQ_\ell)[\phi]$ as $K_{\phi, r}^F$-modules. Let $g \in G_r^F$ be such that \[\hom_{K_{\phi, r}^F \cap {}^g K_{\phi, r}^F}({}^g \rho, \rho') \neq \{0\}.\] Then $g \in K_{\phi, r}^F$ and hence $\rho \cong \rho'$. Recall that ${}^g \rho(x) = \rho(g\i x g)$.
\end{lemma}
\begin{proof}
    By Lemma \ref{natural}, $(K_{\phi, r}^+)^F$ acts on $\rho$ and $\rho'$ by the character $\phi^\natural$. In particular, $g$ intertwines $\phi^\natural$. Note that $\phi^\natural = \prod_{i=0}^{d_\phi} \hat\phi_i |_{(K_{\phi, r}^+)^F}$ by definition. Then the statement follows as in the first part of the proof of \cite[Theorem 3.1]{F21a}, using Lemma \ref{intertwine-i} instead of \cite[Lemma 3.4]{F21a}.
\end{proof}

\begin{proposition} \label{red-to-K}
    We have \[\<\CR_{T, U, r}^G(\phi), \CR_{T, U, r}^G(\phi)\>_{G_r^F} = \<H_c^*(Z_{\phi, U, r}^K, \ov \BQ_\ell)[\phi], H_c^*(Z_{\phi, U, r}^K, \ov \BQ_\ell)[\phi]\>_{K_{\phi, r}^F}.\]
\end{proposition}
\begin{proof}
    As $Z_{\phi, U, r} = \sqcup_{g \in G_r^F / K_{\phi, r}^F} g Z_{\phi, U, r}^K$, we have \[\CR_{T, U, r}^G(\phi) = \ind_{K_{\phi, r}^F}^{G_r^F} H_c^*(Z_{\phi, U, r}^K, \ov\BQ_\ell)[\phi].\] Then the statement follows from Lemma \ref{intertwine}.
\end{proof}

\subsection{The self inner product of $H_c^*(Z_{\phi, U, r}^K, \ov\BQ_\ell)[\phi]$} For simplicity, we set $K = K_{\phi, r}$, $H = H_{\phi, r}$, $L = L_r$, $H_U = H_{\phi, r} \cap U_r$, $L_{\ov U} = L_r \cap \ov U_r$ and so on. For a subset $R \subseteq K_{\phi, r}$ we denote by $\bar R$ the natural image of $R$ in the quotient space $K_{\phi, r} / E_{\phi, r}$.

Note that $\bar K  = \bar H \bar L = \bar L \bar H = \bar H_U \bar H_{\ov U} \bar L$, where \[\bar H_U \cong \bigoplus_{\a \in \Phi^+} \bar H^\a, \quad  \bar H_{\ov U} \cong \bigoplus_{\a \in \Phi^-} \bar H^\a\] where $\bar H^\a = (G^\a)_r^{r(\a)/2} / (G^\a)_r^{r(\a)/2+}$ and $r(\a) = r^\phi(\a)$ is as in \S \ref{subsec: tech} for $\a \in \Phi$. Moreover, $[ \bar H^\a, \bar H^\b] = 0$ if $\a \neq - \b$ and $[\bar H^\a, \bar H^{-\a}] = (T^\a)_r^{r(\a)} /  (T^\a)_r^{r(\a)^+} \subseteq \bar L$ if $\bar H^\a \neq \{0\}$.

Recall that $Z_{\phi, U, r}^K = Z_{\phi, U, r} \cap K_{\phi, r}$ and $Z_{\phi, U, r}^L = Z_{\phi, U, r} \cap L_r$. Then
\[\bar Z_{\phi, U, r}^K \cong \{g \in \bar K; g\i F(g) \in \bar K_U\}, \quad \bar Z_{\phi, U, r}^L  \cong \{g \in \bar L; g\i F(g) \in \bar L_U\}.\] By Deligne-Lusztig reduction, the isotropic spaces \begin{align*} H_c^*(\bar\Sigma_K, \ov\BQ_\ell)_{\phi, \phi\i} &= H_c^*(\bar\Sigma_K, \ov\BQ_\ell)[\phi|_{T_r^F} \boxtimes \phi|_{T_r^F}\i] \\ H_c^*(\bar\Sigma_L, \ov\BQ_\ell)_{\phi, \phi\i} &= H_c^*(\bar\Sigma_L, \ov\BQ_\ell)[\phi|_{T_r^F} \boxtimes \phi|_{T_r^F}\i] \end{align*} are virtual representations of $K_{\phi, r}^F$ and $L_r^F$ respectively.

Consider the  varieties \begin{align*} \bar\Sigma_K &= \{(x, x', y) \in F\bar K_U \times F\bar K_U \times \bar K; x F(y) = y x'\} \\ &\cong \{(x, x', u, v, \t) \in F\bar K_U \times F\bar K_U \times \bar H_U \times \bar H_{\ov U} \times \bar L; x F(v \t)= u v \t x'\}; \\ \bar\Sigma_L &= \{(x, x', y) \in F\bar L_U \times F\bar L_U \times \bar L; x F(y) = y x'\},\end{align*} on which $T_r^F \times T_r^F$ acts in the usual way as in \S \ref{subsec:sigma}. Again as in \cite[\S 6.6]{DL}, we have \begin{align*}\<H_c^*(\bar Z_{\phi, U, r}^K, \ov\BQ_\ell)[\phi], H_c^*(\bar Z_{\phi, U, r}^K, \ov\BQ_\ell)[\phi]\>_{K_{\phi, r}^F} &= \dim H_c^*(\bar\Sigma_K, \ov\BQ_\ell)_{\phi, \phi\i}; \\ \<H_c^*(\bar Z_{\phi, U, r}^L, \ov\BQ_\ell)[\phi], H_c^*(\bar Z_{\phi, U, r}^L, \ov\BQ_\ell)[\phi]\>_{L_r^F} &= \dim H_c^*(\bar\Sigma_L, \ov\BQ_\ell)_{\phi, \phi\i}.\end{align*} Here we write $H_c^*(\bar Z_{\phi, U, r}^L, \ov\BQ_\ell)[\phi] = H_c^*(\bar Z_{\phi, U, r}^L, \ov\BQ_\ell)[\phi|_{T_r^F}]$ for simplicity.
\begin{lemma} \label{prod-L}
    We have $H_c^*(\bar Z_{\phi, U, r}^L, \ov\BQ_\ell)[\phi] = \CR_{T, U \cap L, r}^L(\phi) = R_{T, U \cap L, r}^L(\phi)$. In particular, $\dim H_c^*(\bar\Sigma_L, \ov\BQ_\ell)_{\phi, \phi\i} = |\stab_{W_{L_r}(T_r)^F}(\phi|_{T_r^F})|$.
\end{lemma}
\begin{proof}
    Note that $\bar Z_{\phi, U, r}^L \cong Z_{\phi, U, r}^L / (E_{\phi, r} \cap L_r)$. By Proposition \ref{Levi-CR} we have \begin{align*} H_c^*(\bar Z_{\phi, U, r}^L, \ov\BQ_\ell)[\phi] &\cong \CR_{T, U \cap L, r}^L(\phi) \\ &\cong \CR_{T, U \cap L, r}^L(\phi_{-1}) \otimes \phi_0 |_{L_r^F} \otimes \cdots \otimes \phi_{d_\phi} |_{L_r^F} \\&\cong R_{T, U \cap L, r}^L(\phi_{-1}) \otimes \phi_0 |_{L_r^F} \otimes \cdots \otimes \phi_{d_\phi} |_{L_r^F} \\ &\cong R_{T, U \cap L, r}^L(\phi), \end{align*} where the second and the fourth isomorphisms follow from Proposition \ref{ass-CR} and Theorem \ref{ass-R} respectively, and the third one follows from that \[\CR_{T, U \cap L, r}^L(\phi_{-1}) \cong \CR_{T, U \cap L, 0}^L(\phi_{-1}) = R_{T, U \cap L, 0}^L(\phi_{-1}) \cong R_{T, U \cap L, r}^L(\phi_{-1})\] by Proposition \ref{inf-CR} and Theorem \ref{degenracy}. The second statement follows from Theorem \ref{R-R}.
\end{proof}

\begin{proposition} \label{K-K}
    We have \[\<\CR_{T, U, r}^G(\phi), \CR_{T, U, r}^G(\phi)\>_{G_r^F} = \dim H_c^*(\bar\Sigma_K, \ov\BQ_\ell)_{\phi, \phi\i} = |\stab_{W_{L_r}(T_r)^F}(\phi |_{T_r^F})|.\]
\end{proposition}
\begin{proof}
By Proposition \ref{red-to-K} and Proposition \ref{natural}, it suffices to show the second equality. There is a decomposition $\bar\Sigma_K = \bar\Sigma_K' \sqcup \bar\Sigma_K''$, where $\Sigma_K''$ is defined by the condition $v = 0$.

Note that the commutative group \[D = \{(t, t') \in T_r \times T_r; t\i F(t) = {t'}\i F(t') \in Z(L)_r^\circ\}\] acts on $\bar\Sigma''$ in the usual way. Moreover, as in the proof of Lemma \ref{red-Levi}, we have \[(\bar\Sigma_K'')^{D_\red^\circ} = \bar\Sigma_L = \{(x', x', y) \in F\bar L_U \times  F\bar L_U \times \bar L; x F(y) = y x'\}.\] By Lemma \ref{prod-L} we have \begin{align*}\dim(\bar\Sigma_K'', \ov \BQ_\ell)_{\phi, \phi\i}  &= \dim((\bar\Sigma_K'')^{D_\red^\circ}, \ov \BQ_\ell)_{\phi, \phi\i} = \dim H_c^*(\bar\Sigma_L, \ov\BQ_\ell)_{\phi, \phi\i} \\ &= |\stab_{W_{L_r}(T_r)^F}(\phi |_{T_r^F})|. \end{align*}

It remains to show $ H_c^*(\bar\Sigma_K', \ov\BQ_\ell)_{\phi, \phi\i} = 0$. Note that the action of $(T_r^{0+})^F \cong (T_r^{0+})^F \times \{1\} \subseteq (T_r^{0+})^F \times (T_r^{0+})^F$ on $\bar\Sigma_K'$ is given by \[t: (x, x', u, v, \t) \mapsto (txt\i, x', tut\i, tvt\i, t\t).\] Let $H_c^i(\bar\Sigma_K', \ov \BQ_\ell)_\phi$ be the subspace on which $(T_r^{0+})^F$ acts via $\phi$. It suffices to show $H_c^i(\bar\Sigma_K', \ov \BQ_\ell)_\phi = 0$. For $v \in \bar H_{\ov U}$ and $\a \in \Phi^-$ let $v_\a \in \bar H^\a$ be such that $v = \sum_{\a \in \Phi^-} v_\a$. We fix a total order $\le$ on $\Phi^-$. Then there is a decomposition \[\bar H_{\ov U} = \bigsqcup_{\a \in \Phi^-} \bar H_{\ov U}^{\ge \a},\] where $\bar H_{\overline U}^\a$ is defined by the condition that $v_\a \ne 0$ and $v_\b = 0$ for $\b < \a$. This induces a decomposition $\bar\Sigma_K' = \sqcup_{\a \in \Phi^-} \bar\Sigma_K^{\ge \a}$, and it suffices to show $H_c^*(\bar\Sigma_K^{\ge \a}, \ov \BQ_\ell)_\phi = 0$ for $\a \in \Phi^-$.

Let $\a \in \Phi^-$ such that $\bar H^\a \neq \{0\}$. Then $\bar H^\a \neq \{0\}$. Let $v \in \bar H_{\overline U}^{\ge \a}$ and $\xi \in \bar H^{-\a} \subseteq \bar H_U$. Then $v \xi = \t_{v, \xi} \xi v$, where $\t_{v, \xi} \in (T^\a)_r^{r(\a)}/(T^\a)_r^{r(\a)+} \subseteq \bar L$. Moreover, the map $\xi \mapsto \t_{v, \xi}$ induces an isomorphism $\bar H^{-\a} \cong (T^\a)_r^{r(\a)}/(T^\a)_r^{r(\a)+}$. Then using a similar but simpler argument as in Lemma \ref{vanish}, we deduce that $H_c^*(\bar\Sigma_K^{\ge \a}, \ov \BQ_\ell)_\phi = 0$ as desired.
\end{proof}

\begin{lemma} \label{stabilizer}
    We have \[\stab_{W_{G_r}(T_r)^F}(\phi|_{T_r^F}) = \stab_{W_{L_r}(T_r)^F}(\phi|_{T_r^F}) = \stab_{W_{L_r}(T_r)^F}(\ph_{-1}|_{T_r^F}).\]
\end{lemma}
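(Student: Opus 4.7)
The plan is to prove the chain of equalities by peeling off the layers of the Howe factorization of $\phi$ one level at a time, from the top down, using the genericity fact recalled immediately before the lemma. Throughout I write $\phi = \phi_{-1}\phi_0\cdots\phi_d|_{T^F}$ with $d = d_\phi$.

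The ``easy'' inclusions come almost for free. For each $0 \le i \le d$ the character $\phi_i$ is defined on $(G^i)^F \supseteq L^F$, so its restriction to $T^F$ is automatically invariant under conjugation by any lift to $L^F$ of an element of $W_{L_r}(T_r)^F$. Hence for $w \in W_{L_r}(T_r)^F$ one has $w \cdot \phi = (w \cdot \phi_{-1})\cdot \phi_0 \cdots \phi_d|_{T^F}$, which simultaneously gives $\stab_{W_{L_r}(T_r)^F}(\phi) = \stab_{W_{L_r}(T_r)^F}(\phi_{-1})$ and shows this common stabilizer is contained in $\stab_{W_{G_r}(T_r)^F}(\phi)$.

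The main step is the reverse inclusion $\stab_{W_{G_r}(T_r)^F}(\phi) \subseteq W_{L_r}(T_r)^F$, which I plan to establish by descending induction on $i \in \{d, d-1, \ldots, 0\}$: any $w \in \stab_{W_{G_r}(T_r)^F}(\phi)$ lies in $W_{(G^i)_r}(T_r)^F$. The base case $i=d$ is vacuous. For the inductive step, suppose $w \in W_{(G^{i+1})_r}(T_r)^F$ stabilizes $\phi$, and restrict the relation $w\cdot\phi = \phi$ to $(T_r^{r_i})^F$. Because $0 = r_{-1} < r_0 < \cdots < r_{i-1} < r_i$, each of $\phi_{-1}|_{T^F},\phi_0|_{T^F},\ldots,\phi_{i-1}|_{T^F}$ is trivial on $(T_r^{r_i})^F$, while for $j \ge i+1$ the factor $\phi_j$ extends to a character of $(G^j)^F \supseteq (G^{i+1})^F$ and is therefore $w$-invariant. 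All that survives is the relation $w\cdot \phi_i|_{(T_r^{r_i})^F} = \phi_i|_{(T_r^{r_i})^F}$, and the $(G^i, G^{i+1})$-genericity of $\phi_i$ then pins $w$ down to $W_{(G^i)_r}(T_r)^F$. After $d$ such descent steps $w$ lands in $W_{L_r}(T_r)^F$, closing the chain when combined with the previous paragraph.

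The main obstacle is administrative rather than substantive: one needs that the action of $W_{(G^j)_r}(T_r)^F$ on characters of $T^F$ coincides with conjugation by honest elements of $(G^j)^F$, so that restrictions of characters of $(G^j)^F$ really are $W_{(G^j)_r}(T_r)^F$-invariant on $T^F$. Under the running assumptions on $p$ this should follow by lifting representatives of $N_{(G^j)_r}(T_r)^F$ to $N_{(G^j)_\tx}(T)^F$ via Lang's theorem applied to the connected pro-unipotent kernels $(G^j)_\tx^{r+}$ and then projecting to $(G^j)^F$; I would isolate this as a short preliminary observation before running the induction. Once that bookkeeping is in place, the remaining argument is pure formal manipulation with the Howe factorization.
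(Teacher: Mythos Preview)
Your proposal is correct and follows essentially the same descending-induction scheme as the paper: peel off the Howe factors from the top using genericity, and for the second equality observe that the factors $\phi_0,\ldots,\phi_d$ restricted to $T^F$ are already $W_{L_r}(T_r)^F$-invariant. The only cosmetic difference is that where you invoke ``$\phi_j$ extends to $(G^j)^F$, hence $\phi_j|_{T^F}$ is $w$-invariant'', the paper phrases the same fact as ``$t^{-1}\dot w t \dot w^{-1}\in (G^{i+1}_\der)^F$, hence $\phi_j$ kills it''; these are equivalent, and the lifting bookkeeping you flag is exactly what the paper leaves implicit.
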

\begin{proof}
    It is proved in  \cite[Lemma 3.6.5]{Kal}.
\end{proof}
%\begin{proof} Let $w \in W_{G_r}(T_r)^F$ which fixes $\phi$. Suppose that $w \in W_{(G^{i+1})_r}(T_r)^F$ for some $0 \le i \le d_\phi - 1$. Let $\dw \in (G^{i+1})_r^F$ be a lift of $w$. For $-1 \le j \le i-1$, $\phi_j$ is of depth $< r_i$ and hence $\phi_j |_{(T_r^{r_i})^F}$ is fixed by $\dw$. On the other hand, as $\dw \in (G^{i+1})_r^F$, $\phi_j |_{(T_r^{r_i})^F}$ is also fixed by $\dw$ for $i+1 \le j \le d_\phi$. Therefore, $\phi_i |_{(T_r^{r_i})^F}$ is fixed by $\dw$. As $\phi_i$ is $(G^i, G^{i+1})$-generic, it follows that $w \in  W_{(G^i)_r}(T_r)^F$. Therefore, $w \in \stab_{W_{(G^0)_r}(T_r)^F}(\phi) = \stab_{W_{L_r}(T_r)^F}(\phi)$. By repeating this procedure, we deduce that $\dw \in  L_r^F$. Hence $\dw$ fixes $\phi_i$ for $0 \le i \le d_\phi$. Thus $\stab_{W_{L_r}(T_r)}(\phi) = \stab_{W_{L_r}(T_r)^F}(\phi_{-1})$ as desired. \end{proof}

\begin{theorem} \label{R=CR}
    We have $R_{T, U, r}^G(\phi) = \CR_{T, U, r}^G(\phi)$.
\end{theorem}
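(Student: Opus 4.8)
The plan is to read off the identity from the three inner-product computations already in hand, via the usual orthogonality (norm) argument. By the last assertion of Theorem~\ref{R-R}, the virtual $G_r^F$-module $R_{T,U,r}^G(\phi)$ does not depend on the Borel $TU$; in particular $R_{T,r}^G(\phi)=R_{T,U,r}^G(\phi)$, so it suffices to prove $R_{T,U,r}^G(\phi)=\CR_{T,U,r}^G(\phi)$ as virtual representations of $G_r^F$.

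The heart of the argument is to show that the three pairings $\langle R_{T,U,r}^G(\phi),R_{T,U,r}^G(\phi)\rangle_{G_r^F}$, $\langle\CR_{T,U,r}^G(\phi),R_{T,U,r}^G(\phi)\rangle_{G_r^F}$ and $\langle\CR_{T,U,r}^G(\phi),\CR_{T,U,r}^G(\phi)\rangle_{G_r^F}$ all equal the same integer $n:=\#\{w\in W_{G_r}(T_r)^F;\ \phi^w=\phi\}$. The first equals $n$ by Theorem~\ref{R-R}. For the cross term I would fix a Howe factorization $(G^i,\phi_i,r_i)$ of $\phi$ and choose an auxiliary Borel $TV$ of $G$ whose positive root system refines the chain $G^0\subset G^1\subset\cdots\subset G^d=G$ — built by starting from any Borel of $G^0$ and, at each step, adjoining the roots of one of the two opposite unipotent radicals — so that each $G^i$ is a standard Levi for $V$. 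Applying Theorem~\ref{CR-R} with $(S,\psi)=(T,\phi)$ and this $V$ (noting $W_{G_r}(T_r,T_r)=W_{G_r}(T_r)$) gives $\langle\CR_{T,U,r}^G(\phi),R_{T,V,r}^G(\phi)\rangle_{G_r^F}=n$, and $R_{T,V,r}^G(\phi)=R_{T,U,r}^G(\phi)$ again by Theorem~\ref{R-R}. Finally, Proposition~\ref{K-K} gives $\langle\CR_{T,U,r}^G(\phi),\CR_{T,U,r}^G(\phi)\rangle_{G_r^F}=\#\{w\in W_{L_r}(T_r)^F;\ \phi^w=\phi\}$, and Lemma~\ref{stabilizer} identifies $\stab_{W_{L_r}(T_r)^F}(\phi)$ with $\stab_{W_{G_r}(T_r)^F}(\phi)$, so this too has cardinality $n$.

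Now set $D=R_{T,U,r}^G(\phi)-\CR_{T,U,r}^G(\phi)$, a virtual $G_r^F$-module. Since $G_r^F$ is finite and $\langle\,,\,\rangle_{G_r^F}$ is the symmetric bilinear pairing $\langle V,W\rangle=\dim\Hom_{G_r^F}(V,W)=\sum_\rho m_\rho(V)m_\rho(W)$ on virtual modules, we get $\langle D,D\rangle_{G_r^F}=n-2n+n=0$. Writing $D=\sum_\rho m_\rho\rho$ over the irreducible $G_r^F$-modules, $\sum_\rho m_\rho^2=0$, hence every $m_\rho=0$ and $D=0$. This yields $R_{T,U,r}^G(\phi)=\CR_{T,U,r}^G(\phi)$, and combined with $R_{T,r}^G(\phi)=R_{T,U,r}^G(\phi)$ the asserted chain of equalities follows.

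I do not expect a genuine obstacle here: all the substantive work lies in Theorems~\ref{R-R} and~\ref{CR-R}, Proposition~\ref{K-K}, and Lemma~\ref{stabilizer}, and the present argument is just their assembly plus the norm trick. The only point that needs a little care is the cross-term step: one must produce the auxiliary Borel $TV$ for which the Howe Levis of $\phi$ are standard, so that Theorem~\ref{CR-R} legitimately applies with $\psi=\phi$, while simultaneously invoking the Borel-independence of $R_{T,U,r}^G(\phi)$ from Theorem~\ref{R-R} to conclude that the computed pairing is really $\langle\CR_{T,U,r}^G(\phi),R_{T,U,r}^G(\phi)\rangle_{G_r^F}$.
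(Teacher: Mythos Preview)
Your proposal is correct and follows essentially the same route as the paper: compute the three pairings via Theorem~\ref{R-R}, Theorem~\ref{CR-R}, Proposition~\ref{K-K} and Lemma~\ref{stabilizer}, then conclude by the norm-zero argument that the difference vanishes. Your treatment of the cross term (choosing an auxiliary Borel $TV$ making the Howe Levis standard, then invoking Borel-independence of $R$) is exactly the care the paper's terse proof leaves implicit in its use of the notation $R_{T,r}^G(\phi)$.
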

\begin{proof}
By Theorem \ref{R-R}, Theorem \ref{CR-R}, Lemma \ref{stabilizer} and Proposition \ref{K-K}, we have \[\<R_{T, U, r}^G(\phi) - \CR_{T, U, r}^G(\phi), R_{T, U, r}^G(\phi) - \CR_{T, U, r}^G(\phi)\>_{G_r^F} = 0.\] So the statements follows.
\end{proof}

\section{The representation $H_c^*(\bar Z_{\phi, U, r}^H, \BQ_\ell)[\phi]$} \label{sec: concentration}

Let notation be as in \S \ref{sec-algebraic}. Recall that $Z_{\phi, U, r}^H = Z_{\phi, U, r} \cap H_{\phi, r}$. Write $\bar H = H_{\phi, r} / E_{\phi, r}$, $\bar T^+ = T_r^{0+} / (E_{\phi, r} \cap T_r^{0+})$ and $\bar Z^H = Z_{\phi, U, r}^H / E_{\phi, r}$. We have \[\bar H \cong \bar T^+ \prod_{\a \in D} \bar H^\a,\] where $\bar H^\a = (G^\a)_r^{r(\a)/2} / (G^\a)_r^{r(\a)/2+}$ and $D = \{\a \in \Phi; \bar H^\a \neq \{0\}\}$. Here $r(\a) = r^\phi(\a)$ is as in \S \ref{subsec: tech}. Note that $D = F(D) = -D$. For each $\g \in D$ we fix an isomorphism $u_\g: \BG_a \overset \sim \longrightarrow \bar H^\g$.

\begin{lemma} \label{commutator}
    Let $\a, \b \in D$. If $\a + \b \neq 0$, we have $[\bar H^\a, \bar H^\b] = \{0\}$. Otherwise, we have \[[u_\a(x), u_\b(y)] = \a^\vee(1 + \varpi^{r(\a)} c_\a x y) \in \bar T^+,\] where $c_\a \in \ov\BF_q^\times$ is certain constant and $\a^\vee$ denotes the coroot of $\a$.
\end{lemma}

Let $C \subseteq D$ be a subset. We set $C^\pm = C \cap \Phi^\pm$. If $C \cap -C = \emptyset$, we write $\bar H^C = \prod_{\a \in C} \bar H^\a$, which is commutative subgroup of $\bar H$ isomorphic to $\BA^{|C|}$ by Lemma \ref{commutator}. By definition we have \[\bar Z^H = \{g \in \bar H; g\i F(g) \in \bar H^{F(D^+)}\}.\] Then main result of this section is
\begin{theorem} \label{one-degree}
    There exists a unique non-negative integer $n_\phi$ such that $H_c^i(\bar Z^H, \ov\BQ_\ell)[\phi] \neq \{0\}$ if and only if $i = n_\phi$.
\end{theorem}
The theorem will be proved in Corollary \ref{one-degree-flat}.

\subsection{Reductions} Let $D_s^\pm$ be the union of $F$-orbits $\CO$ of $D$ such that $\CO \subseteq \Phi^\pm$. Put $D_m = D \setminus (D_s^+ \cup D_s^-)$. Then  $D_s^+ = F(D_s^+) = -D_s^-$ and $D_m = F(D_m) = -D_m$. Let \[\bar H^\natural = \bar T^+ \prod_{\a \in D_s^+ \cup D_m} \bar H^\a,\] which is an $F$-stable subgroup. Set $\bar Z^\natural = \bar Z^\natural \cap \bar H^\natural$. As $\bar H^{F(D^+)} \subseteq \bar H^\natural$, we have \[ \bar Z^H = \cup_{g \in \bar H^F / (\bar H^\natural)^F} g \bar Z^\natural.\]

\begin{lemma} \label{cross}
    The map $(x, y) \mapsto x\i y F(x)$ gives an isomorphism  \[ \bar H^{D_m^+ \cap F(D_m^+)} \times \bar H^{D_s^+ \cup (D_m^- \cap F(D_m^+))} \overset \sim \to \bar H^{D_s^+ \cup F(D_m^+)}.\]
\end{lemma}
\begin{proof}
    The proof is straightforward by using that the subgroup $\bar H^{D_s^+ \cup F(D_m^+)}$ is commutative and that $F \bar H^\g = \bar H^{F(\g)}$ for $\g \in D$.
\end{proof}

Now we define $\bar Z^\flat = \{g \in \bar H^\natural; g\i F(g) \in \bar H^{D_s^+ \cup (D_m^- \cap F(D_m^+))}\}$.
\begin{proposition} \label{flat}
    The map $(x, z) \mapsto z x$ gives an isomorphism \[\bar H^{D_m^+ \cap F(D_m^+)} \times \bar Z^\flat \overset \sim \to \bar Z^\natural.\] In particular, $H_c^i(\bar Z^\natural, \ov\BQ_\ell)[\phi] \cong H_c^{i-2|D_m^+ \cap F(D_m^+)|}(\bar Z^\flat, \ov\BQ_\ell)[\phi]$ for $i \in \BZ$.
\end{proposition}
\begin{proof}
    It follows directly form Lemma \ref{cross}.
\end{proof}

Let $\pi: \bar H^\natural \to A := \bar H^\natural / \bar T^+ = \oplus_{\a \in D_s^+ \cup D_m} A^\a$ be the quotient map, where $A^\a \cong \bar H^\a$ is the natural image of $\bar H^\a$ in $A$. Note that $A$ is a commutative group. For $C \subseteq D_s^+ \cup D_m$ we set $A^C = \oplus_{\a \in A} A^\a$. Define \[Y = \pi(\bar Z^\flat) = \{g \in A; g\i F(g) \in  A^{D_s^+ \cup (D_m^- \cap F(D_m^+))}\}.\]

Let $\g \in D^- \cap F(D^+) \subseteq D_m$. Let $0 < a_\g < b_\g$ be the minimal positive integers such that $F^{a_\g}(\g) \in D^+ \cap F(D^-)$ and $F^{b_\g}(\g) \in D^- \cap F(D^+)$.
\begin{lemma} \label{iso}
    The natural projection $A = A^{D_s^+ \cup D_m} \to A^{D_s^+ \cup (D^- \cap F(D^+)}$ induces an isomorphism \[f: Y \overset \sim  \to A^{D_s^+ \cup (D^- \cap F(D^+)}.\] Moreover, $f\i = \pi \circ h$, where $h: A^{D_s^+ \cup (D^- \cap F(D^+)} \to \bar H^\natural$ is defined by \[(x_\g)_{\g \in D_s^+ \cup (D^- \cap F(D^+)} \mapsto \prod_{\g \in D_s^+} u_\g(x_\g) \prod_{\g \in D^- \cap F(D^+)} u_\g(x_\g) F(u_\g(x_\g)) \cdots F^{b_\g-1}(u_\g(x_\g)).\]
\end{lemma}
\begin{proof}
    As $A$ is commutative, the statement follows in the same way of \cite[Lemma 5.4]{IN24a}.
\end{proof}

We define \[\varphi = \pr \circ L \circ h \circ f: Y \to \bar T^+,\] where $h$ is as in Lemma \ref{iso}, $L: \bar H^\natural \to \bar H^\natural$ is the Lang's map given by $g \mapsto g\i F(g)$ and $\pr: \bar H^{D_s^+ \cup (D^- \cap F(D^+))} \bar T^+ \to \bar T^+$ is the natural projection.
\begin{proposition} \label{cartesian}
    There is a natural Cartesian square \[ \xymatrix{
    \bar Z^\flat \ar[d]_{\pi} \ar[r]^\d &  \bar T^+ \ar[d]^{-L} \\
    Y \ar[r]^{\varphi} &  \bar T^+,} \] where $\d$ is the projection given by \[\pi\i(Y) = h(f(Y)) \bar T^+ \cong h(f(Y)) \times \bar T^+ \to \bar T^+.\]
\end{proposition}
\begin{proof}
    By definition we have $\bar Z^\flat \subseteq \pi\i(Y)$. As $h \circ f: Y \to \bar H^\natural$ is a section of $Y$, we have $\pi\i(Y) = h(f(Y)) \bar T^+$. Then the statement follows from the definition of $\bar Z^\flat$ and  that $\bar T^+$ is the center of $\bar H^\natural$.
\end{proof}

\subsection{The local system $\CL_\phi$}
Note that $\bar T^+$ is a commutative unipotent algebraic group over $\BF_q$. By \cite[Lemma 6.1]{B}, for each character $\chi: (\bar T^+)^F \to \ov \BQ_\ell^\times$ there is a unique multiplicative local system (up to isomoprhism) $\CL_\chi$ on $\bar T^+$ whose Frobenius-trace function equals $\chi$. We write $\CL_\phi = \CL_{\phi |_{(\bar T^+)^F}}$.

The following result, due to Boyarchenko \cite[Proposition 2.10]{B} (see also \cite[Proposition 4.2.1]{Ch20}), provides an inductive way to compute the cohomology of $\CL_\phi$.
\begin{proposition} \label{iteration}
    Let $X_1$ be a variety over $\ov \BF_q$ and let $\xi: X = X_1 \times \BG_a \to \bar T^+$ be a morphism of the form \[(x, y) \mapsto \eta(x, y) \z(x)\] such that for any $x \in X_1$ the morphism $\eta_x: \BG_a \to \bar T^+$ given by $y \mapsto \eta(x, y)$ is a group homomorphism. Then we have \[H_c^i(X, \xi^* \CL_\phi) \cong H_c^i(V, (\xi|_V)^* \CL_\phi),\] where $V \subseteq X$ is the closed subvariety consisting of points $(x, y) \in X$ such that $\eta_x^* \CL_\phi$ is trivial.
\end{proposition}

We also need the following explicit computations for $\BG_a \cong \BA^1$.
\begin{proposition} \label{cohomology}
    Let $\CO$ be an $F$-orbit of $D$ and let $\g \in \CO$. Then

    (1) $H_c^i(\BG_a, \k^* \CL_\phi) = 0$ for any $i \in \BZ$, where $\k: \BG_a \to \bar T^+$ is given by $x \mapsto \g^\vee(1 + \varpi^{r(\g)} x)$;

    (2) If $|\CO|$ is even and $F^{|\CO|/2}(\g) = - \g$, then \begin{align*}\dim H_c^i(\BG_a, \t^* \CL_\phi) =  \begin{cases} q^{|\CO|/2}, & \text{ if } i=1; \\ 0, &\text{ otherwise,} \end{cases}\end{align*} where $\t: \BG_a \to \bar T^+$ is given by $x \mapsto \g^\vee(1 + \varpi^{r(\g)} x^{q^{|\CO|/2} + 1})$.
\end{proposition}
\begin{proof}
    The first statement follows form Proposition \ref{iteration}. The second follows from \cite[Proposition 5.16]{IN24a}, which is based on \cite[Proposition 6.6.1]{BW}.
\end{proof}

\subsection{The computation}
Let $\CO$ be an $F$-orbit of $D_m$. We set $\hat\CO = \CO \cup -\CO$. Fix a subset $\{\CO_1, \dots, \CO_{n_0}\}$ of $F$-orbits of $D_m$ such that $D_m$ is a disjoint union of $\hat \CO_i$ for $1 \le i \le n_0$.
\begin{lemma} \label{product}
  Let  $\varphi$ be as in Proposition \ref{cartesian}. Then we have
  \begin{itemize}
      \item $H_c^i(\bar Z^\flat, \ov \BQ_\ell)[\phi] \cong H_c^i(Y, \varphi^* \CL_\phi)$;

      \item $Y \cong (Y \cap A^{D_s^+}) \times (Y \cap A^{\hat\CO_1}) \times \cdots \times (Y \cap A^{\hat \CO_{n_0}})$;

      \item $\varphi^*\CL_\phi \cong \varphi_{D_s^+}^*\CL_\phi \boxtimes \varphi_{\hat \CO_1}^*(\CL_\phi) \boxtimes \cdots  \boxtimes \varphi_{\hat \CO_{n_0}}^*(\CL_\phi)$.
  \end{itemize}
  Here $\varphi_C$ denotes the restriction of $\varphi$ to $Y \cap A^C$ with $C = D_s^+$ or $C = \hat \CO_i$ for $1 \le i \le n_0$.
\end{lemma}
\begin{proof}
    By Proposition \ref{cartesian} the projection $\pi|_{\bar Z^\flat}: \bar Z^\flat \to Y$ is a $(\bar T^+)^F$-torsor. Hence we have $H_c^i(\bar Z^\flat, \ov \BQ_\ell)[\phi] \cong H_c^i(Y, \varphi^* \CL_\phi)$ and the first statement follows. The last two statements follow by observing that the subgroups $\pi\i(A^C) \subseteq \bar H^\natural$ for $C = D_s^+, \hat \CO_1, \dots, \hat \CO_{n_0}$ are $F$-stable and commute with each other.
\end{proof}

\begin{lemma} \label{one-degree-part}
    Let $C = D_s^+$ or $C = \hat \CO$ for some $F$-orbit $\CO \subseteq D_m$. Then there is a unique non-negative integer $n_C$ such that $H_c^i(Y \cap A^C, \varphi_C^* \CL_\phi) \neq \{0\}$ if and only if $i = n_C$.
\end{lemma}
\begin{proof}
    First assume that $C = D_s^+$. By Lemma \ref{iso} we have $Y \cap A^C = A^C \cong \BA^{|D_s^+|}$. Moreover, $\varphi_C$ is the trivial map since $\pi\i(A^C)$ is commutative. Hence $H_c^i(Y \cap A^C, \varphi_C^* \CL_\phi) \cong H_c^i(\BA^{|D_s^+|}, \ov\BQ_\ell)$ and the statement follows.

    Now we assume that $C = \hat \CO = \CO \cup -\CO$, where $\CO$ is an $F$-orbit of $D_m$. Then $Y \cap A^C \cong A^{C^- \cap F(C^+)}$ by Lemma \ref{iso}. Choose $\g \in \CO^- \cap F(\CO^+)$. Then there is sequence of integers \[0 = a_0 < a_1 < a_2 < \cdots < a_{2m_0} = |\CO|\] such that $F^{a_{2i-1}}(\g) \in \CO^+ \cap F(\CO^-)$ and $F^{a_{2i}}(\g) \in \CO^- \cap F(\CO^+)$  for $1 \le i \le m_0$. Set $\g_j = F^{a_j}(\g)$ for $0 \le j \le 2m_0$. We need to consider the following two cases.

    Case (1): $\CO \cap -\CO = \emptyset$. Then \[C^- \cap F(C^+) = \{\g_{2i}; 1\le i \le m_0\} \cup \{-\g_{2i-1}; 1\le i \le m_0\}.\] The map $\varphi_C$ is given by \[(x_{\g_{2i}}, x_{-\g_{2i-1}})_{1 \le i \le m_0} \mapsto \sum_{i=1}^{m_0} \g_{2i}^\vee(1 + \varpi^{r(\g_{2i})} c_{\g_{2i}} (x_{\g_{2i}} - x_{\g_{2i-2}}^{q^{a_{2i}- a_{2i-2}}}) x_{-\g_{2i-1}}^{q^{a_{2i} - a_{2i-1}}}),\] where each $c_{\g_{2i}} \in \ov \BF_q^\times$ is certain constant. Let $V \subseteq A^{C^- \cap F(C^+)} \cong Y \cap A^C$ be the closed subset defined by the equations $x_{\g_{2i}} - x_{\g_{2i-2}}^{q^{a_{2i}- a_{2i-2}}} = 0$ for $1 \le i \le m_0$. Then $V$ is a disjoint union of $q^{|\CO|}$ copies of $\BA^{m_0}$ and the restriction of $\varphi_C$ to $V$ is trivial. Applying Proposition \ref{iteration} repeatedly, we have \[H_c^i(Y \cap A^C, \varphi_C^* \CL_\phi) \cong H_c^i(V, \ov\BQ_\ell) \cong H_c^i(\BA^{m_0}, \ov\BQ_\ell)^{\oplus q^{|\CO|}}.\] Hence the statement holds in this case.

    Case (2): $\CO = -\CO$. Then $C^- \cap F(C^+) = \{\g_{2i}; 1 \le i \le m_0\}$, $|\CO| = 2 a_{m_0}$, $m_0$ is odd and $\g_{i+m_0} = -\g_i$ for $0 \le i \le m_0$. The map $\varphi_C$ is given by \begin{align*} (x_{\g_{2i}})_{1 \le i \le m_0} &\mapsto \sum_{i=1}^{(m_0-1)/2} \g_{2i}^\vee(1 + \varpi^{r(\g_{2i})} c_{\g_{2i}} (x_{\g_{2i}} - x_{\g_{2i-2}}^{q^{a_{2i} - a_{2i-2}}}) x_{\g_{2i-1+m_0}}^{q^{a_{2i+m_0} - a_{2i-1+m_0}}}) \\ &\quad\quad + \g_{2m_0}^\vee(1 + \varpi^{r(\g_{2m_0})} c_{\g_{2m_0}} x_{\g_{2m_0}} x_{\g_{m_0-1}}^{q^{a_{m_0} - a_{m_0-1}}}), \end{align*} where each $c_{\g_{2i}} \in \ov \BF_q^\times$ is certain constant. Let $V \subseteq A^{C^- \cap F(C^+)} \cong Y \cap A^C$ be the closed subset defined by the equations $x_{\g_{2i}} - x_{\g_{2i-2}}^{q^{a_{2i}- a_{2i-2}}} = 0$ for $1 \le i \le (m_0 - 1)/2$. Then $V \cong \BA^{(m_0-1)/2} \times \BA^1$ and the restriction of $\varphi_C$ to $V$ is the composition of the natural projection $\BA^{(m_0-1)/2} \times \BA^1 \to \BA^1$ with the following morphism \[\t: \BA^1 \to \bar T^+, \quad x \mapsto \g^\vee(1 + \varpi^{r(\g)} c_\g x^{1 + q^{|\CO|/2}}).\] Applying Proposition \ref{iteration} repeatedly, we have \[H_c^i(Y \cap A^C, \varphi_C^* \CL_\phi) \cong H_c^i(V, (\varphi_C^* \CL_\phi)|_V) \cong H_c^{i - m_0 + 1}(\BA^1, \t^* \CL_\phi ).\] Hence the statement also holds by Proposition \ref{cohomology} (2).
\end{proof}

\begin{corollary} \label{one-degree-flat}
    There is a non-negative integer $n_\phi^\flat$ such that $H_c^i(\bar Z^\flat, \ov\BQ_\ell)[\phi] \neq \{0\}$ if and only if $i = n_\phi^\flat$. As a consequence, Theorem \ref{one-degree} is true.
\end{corollary}
\begin{proof}
    By Lemma \ref{product} and the K\"{u}nneth formula, we have \begin{align*} &\quad\ H_c^i(\bar Z^\flat, \ov\BQ_\ell)[\phi] \\ &\cong \bigoplus_{(i_j)_{0 \le j \le n_0}, \sum_j i_j = i} H_c^{i_0}(Y \cap A^{D_s^+}, \varphi_{D_s^+}^* \CL_\phi) \otimes H_c^{i_1}(Y \cap A^{\hat \CO_1}, \varphi_{\hat \CO_1}^* \CL_\phi)  \otimes \cdots \otimes H_c^{i_{n_0}}(Y \cap A^{\hat \CO_{n_0}}, \varphi_{\hat \CO_{n_0}}^* \CL_\phi). \end{align*} Hence the first statement follows from Lemma \ref{one-degree-part}. The second statement follows from the decomposition $\bar Z^H = \cup_{g \in \bar H^F / (\bar H^\natural)^F} g \bar Z^\natural$ and Proposition \ref{flat}.
\end{proof}

\section{An irreducible decomposition} \label{sec-decomp}
Let notation be as in \S \ref{sec-algebraic}, and assume moreover that $(G^i)_{0 \le i \le d_\phi}$ is standard with respect to $U$. In this section we give an explicit description of $R_{T, U, r}^G(\phi) = \CR_{T, U, r}^G(\phi)$.

\subsection{The $K_{\phi, r}^F$-module $\k_\phi$} \label{subsec:kappa}
Let $Z_{\phi, U, r}^H = Z_{\phi, U, r} \cap H_{\phi, r}$ and $\bar Z_{\phi, U, r}^H = Z_{\phi, U, r}^H / E_{\phi, r}$. Note that the finite groups $(T_r^{0+})^F$ and $(K_{\phi, r}^+)^F = E_{\phi, r}^F (T_r^{0+})^F$ act on $\bar Z_{\phi, U, r}^H$ by right multiplication. Hence the corresponding isotropic subspaces \[H_c^*(\bar Z_{\phi, U, r}^H, \ov\BQ_\ell)[\phi] := H_c^*(\bar Z_{\phi, U, r}^H, \ov\BQ_\ell)[\phi |_{(T_r^{0+})^F}] \text{ and } H_c^*(\bar Z_{\phi, U, r}^H, \ov\BQ_\ell)[\phi^\natural]\] are virtual representations of $H_{\phi, r}^F$. Here $\phi^\natural$ is the character of $(K_{\phi, r}^+)^F$ defined in \S\ref{subsec:CR}.
\begin{lemma} \label{coincidence}
    We have $H_c^i(\bar Z_{\phi, U, r}^H, \ov\BQ_\ell)[\phi] = H_c^i(\bar Z_{\phi, U, r}^H, \ov\BQ_\ell)[\phi^\natural]$ for $i \in \BZ$.
\end{lemma}
\begin{proof}
    As $\phi^\natural |_{(T_r^{0+})^F} = \phi |_{(T_r^{0+})^F}$, it suffices to show the left hand side is contained in the right hand side. Let $g \in (K_{\phi, r}^+)^F$. Then $g = h t$ for some $t \in  (T_r^{0+})^F$ and $h \in E_{\phi, r}^F$. As $E_{\phi, r}^F$ acts on $\bar Z_{\phi, U, r}^H$ trivially, $g = h t$ acts $H_c^i(\bar Z_{\phi, U, r}^H, \ov\BQ_\ell)[\phi]$ by the scalar $\phi(t) = \phi^\natural(g)$. So the statement follows.
\end{proof}

\begin{proposition} \label{Heisenberg-group}
     If $d_\phi = 0$ and $\phi_{d_\phi} = 1$, then $H_{\phi, r}^F / \ker \phi^\natural$ is trivial. Otherwise, it is a Heisenberg $p$-group with center $(K_{\phi, r}^+)^F / \ker \phi^\natural$.
\end{proposition}
\begin{proof}
    If $d_\phi = 0$ and $\phi_{d_\phi} = 1$, then $H_{\phi, r} = G_r^{0+}$, $\phi^\natural = 1$ and hence $H_{\phi, r}^F / \ker \phi^\natural = \{1\}$. Otherwise, the statement is proved in \cite[Proposition 18.1]{Kim}.
\end{proof}

\begin{proposition} \label{Heisenberg}
    The $H_{\phi, r}^F$-module $\pm H_c^*(\bar Z^H_{\phi, U, r}, \ov \BQ_\ell)[\phi]$ is irreducible. In particular, when $H_{\phi, r}^F / \ker \phi^\natural$ is nontrivial, it is the inflation of the unique irreducible Heisenberg representation with a nontrivial central character $\phi^\natural |_{(K_{\phi, r}^+)^F / \ker \phi^\natural}$.
\end{proposition}
\begin{proof}
By a similar but simpler argument in Proposition \ref{K-K}, we have \[\<H_c^*(\bar Z^H_{\phi, U, r}, \ov\BQ_\ell)[\phi], H_c^*(\bar Z^H_{\phi, U, r}, \overline \BQ_\ell)[\phi]\>_{H_{\phi, r}^F} = 1.\] Hence $\pm H_c^*(\bar Z^H_{\phi, U, r}, \ov \BQ_\ell)[\phi]$ is an irreducible $H_{\phi, r}^F$-module. By Proposition \ref{natural}, $(K_{\phi,r}^+)^F$ acts on $H_c^*(\bar Z^H_{\phi, U, r}, \ov\BQ_\ell)[\phi]$ by the character $\phi^\natural$. Assume $H_{\phi, r}^F / \ker \phi^\natural$ is non-trivial, then it is a Heisenberg $p$-group with center $(K_{\phi, r}^+)^F / \ker \phi^\natural$ by Proposition \ref{Heisenberg-group}. Hence $\pm H_c^*(\bar Z^H_{\phi, U, r}, \ov \BQ_\ell)[\phi]$ is an irreducible $H_{\phi, r}^F / \ker \phi^\natural$-module with a non-trivial central character $\phi^\natural |_{(K_{\phi, r}^+)^F / \ker \phi^\natural}$, which is uniquely determined by the representation theory of Heisenberg $p$-groups.
\end{proof}

\smallskip

We write $L = G^0$. Since $(G^i)_{-1 \le i \le d_\phi}$ is standard with respect to $U$, $L_r$ normalizes $H_{\phi, r} \cap \CI_{\phi, U, r}$. Moreover, we have $[L_r, T_r^{0+}] \subseteq E_{\phi, r}$. Hence $L_r^F \times (T_r^{0+})^F$ acts on $\bar Z^H_{\phi, U, r}$ by \[(y, t): z \mapsto y z y\i t = y z t y\i.\] This induces an action $\ad_{L_r^F}$ of $L_r^F$ on $H_c^i(\bar Z^H_{\phi, U, r}, \ov\BQ_\ell)[\phi]$ for $i \in \BZ$. Since $[L_r^{0+}, H_{\phi, r}] \subseteq E_{\phi, r}$, the action $\ad_{L_r^F}$ factors through the quotient $L_0^F$. By Proposition \ref{natural} and that $K_{\phi, r}^F = H_{\phi, r}^F L_r^F$, each $H_{\phi, r}^F$-module $H_c^i(\bar Z^H_{\phi, U, r}, \ov\BQ_\ell)[\phi]$ extends to a $K_{\phi, r}^F$-module on which $L_r^F$ acts by $\ad_{L_r^F}$ times the character $\prod_{i=0}^{d_\phi} \phi_i|_{L_r^F}$. Thus we can define a virtual $K_{\phi, r}^F$-module \[\k_\phi = \k_{\phi, U} := \sum_i (-1)^i H_c^i(\bar Z^H_{\phi, U, r}, \ov\BQ_\ell)[\phi].\] By Theorem \ref{one-degree}, $\pm \k_\phi$ is a genuine $K_{\phi, r}^F$-module, whose restriction to $H_{\phi, r}^F$ is the irreducible $H_{\phi, r}^F$-module as in Proposition \ref{Heisenberg}.

\subsection{The irreducible decomposition}
Now we describe the virtual $K_{\phi, r}^F$-module $H_c^*(Z_{\phi, U, r}^K, \ov\BQ_\ell)[\phi]$. Let $(h, l, t_1, t_2) \in H_{\phi, r}^F \times L_r^F \times (T_r^{0+})^F \times T_r^F$. As $[L_r, T_r^{0+}] \subseteq E_{\phi, r}$, the map \[(h, l, t_1, t_2): (x, y) \mapsto (h l x l\i t_1, l y t_2)\] induces an action of $(H_{\phi, r}^F \rtimes L_r^F) \times (T_r^{0+})^F \times T_r^F$ on $\bar Z_{\phi, U, r}^H \times \bar Z_{\phi, U, r}^L$.
\begin{proposition} \label{tensor}
    The map $(x, y) \to xy$ induces an $(H_{\phi, r}^F \rtimes L_r^F) \times (T_r^{0+})^F \times T_r^F$-equivariant $(T_r^{0+})^F / (E_{\phi, r} \cap T_r)^F$-torsor \[f: \bar Z_{\phi, U, r}^H \times \bar Z_{\phi, U, r}^L \to \bar Z_{\phi, U, r}^K,\] In particular, there is an isomorphism of $K_{\phi, r}^F$-modules \[ H_c^*(Z_{\phi, U, r}^K, \ov \BQ_\ell)[\phi] \cong \k_\phi \otimes R_{T, U, 0}^L(\phi_{-1}).\] Here $R_{T, U, 0}^L(\phi_{-1})$ is viewed as a virtual $K_{\phi, r}^F$-module by the natural projections $K_{\phi, r}^F = H_{\phi, r}^F L_r^F \to L_r^F \to L_0^F$.
\end{proposition}
\begin{proof}
First note that $f$ is well-defined and is $(H_{\phi, r}^F \rtimes L_r^F) \times (T_r^{0+})^F \times T_r^F$-equivariant, since $[L_r, T_r^{0+}] \subseteq E_{\phi, r}$ and $H_{\phi, r} \cap F\CI_{\phi, U, r}$ is normalized by $L_r$.

Let $z \in Z_{\phi, U, r}^K$. Write $z = x y$ with $x \in H_{\phi, r}$ and $y \in L_r$. Then \[(y\i x\i F(x) y) y\i F(y) = z\i F(z) \in F\CI_{\phi, U, r} = (H_{\phi, r} \cap F\CI_{\phi, U, r}) (L_r \cap F\CI_{\phi, U, r}).\] Hence $z\i F(z) = a b$ for some $a \in H_{\phi, r} \cap F\CI_{\phi, U, r}$ and $b \in L_r \cap F\CI_{\phi, U, r}$. Then \[y\i F(y) b\i = (y\i x\i F(x) y)\i a \in L_r \cap H_{\phi, r} = L_r^{0+}.\] By Lang's theorem, there exists $\d \in L_r^{0+}$ such that $(\d y)\i F(\d y) = b \in F\CI_{\phi, U, r}$. Thus, by replacing the pair $(x, y)$ with $(x \d\i, \d y)$, we can assume further that $y\i F(y) \in F\CI_{\phi, U, r}$, that is $y \in Z_{\phi, U, r}^L$. This implies that $y\i x\i F(x) y \in H_{\phi, r} \cap F\CI_{\phi, U, r}$ and hence $x\i F(x) \in F\CI_{\phi, U, r}$. Thus $x \in Z_{\phi, U, r}^H$ and $f$ is surjective.

Let $x, x' \in Z_{\phi, U, r}^H$ and $y, y' \in Z_{\phi, U, r}^L$ such that $x y E_{\phi, r} = x' y' E_{\phi, r}$. As $E_{\phi, r} \subseteq H_{\phi, r}$, we may assume that $x y = x' y'$. Then ${x'}\i x = y' y\i \in H_{\phi, r} \cap L_r = L_r^{0+}$. Hence we may assume further $x \in x' (L_\der)_r^{0+} t$ for some $t \in T_r^{0+}$. As $x, x' \in Z_{\phi, U, r}^H$, we have $t \in (T_r^{0+})^F (E_{\phi, r} \cap T_r)$, which implies that $x' E_{\phi, r} = x t\i E_{\phi, r} = t\i x E_{\phi, r}$ and $y' E_{\phi, r} = t y E_{\phi, r} = y t E_{\phi, r}$. Therefore, $f$ is a $(T_r^{0+})^F / (E_{\phi, r} \cap T_r)^F$-torsor as desired.

Let $\phi^\flat = \phi \circ m$, where $m: (T_r^{0+})^F \times T_r^F \to T_r^F$ is given by $(t_1, t_2) \mapsto t_1 t_2$. Then we have \begin{align*}
    &\quad\ H_c^*(Z_{\phi, U, r}^K, \ov\BQ_\ell)[\phi] \\ &\cong H_c^*(\bar Z_{\phi, U, r}^K, \ov\BQ_\ell)[\phi^\flat] \\ &\cong H_c^*(\bar Z_{\phi, U, r}^H \times \bar Z_{\phi, U, r}^L, \ov\BQ_\ell)^{(T_r^{0+})^F}[\phi^\flat] \\ &\cong H_c^*(\bar Z_{\phi, U, r}^H \times \bar Z_{\phi, U, r}^L, \ov\BQ_\ell)[\phi^\flat] \\ &\cong  H_c^*(\bar Z_{\phi, U, r}^H, \ov\BQ_\ell)[\phi] \otimes  H_c^*(\bar Z_{\phi, U, r}^L, \ov\BQ_\ell)[\phi] \\ &\cong H_c^*(\bar Z_{\phi, U, r}^H, \ov\BQ_\ell)[\phi] \otimes (\phi_0|_{L_r^F} \otimes \cdots \otimes \phi_{d_\phi}|_{L_r^F} \otimes  H_c^*(\bar Z_{\phi, U, r}^L, \ov\BQ_\ell)[\phi_{-1}]) \\ &\cong \k_\phi \otimes \CR_{T, U, r}^L(\phi_{-1}) \\ &\cong \k_\phi \otimes R_{T, U, 0}^L(\phi_{-1}),
\end{align*} where the second isomorphism follows from the first statement; the third one follows from the $\phi^\flat$ is trivial over $(T_r^{0+})^F \cong \{(t, t\i); x \in (T_r^{0+})^F\}$; the fifth one follows from Proposition \ref{ass-CR}; the last one follows from Proposition \ref{inf-CR}.
\end{proof}

\begin{theorem} \label{decomposition}
    There is an irreducible decomposition \[\CR_{T, U, r}^G(\phi) = \ind_{K_{\phi, r}^F}^{G_r^F} \k_\phi \otimes R_{T, U, 0}^L(\phi_{-1}) = \sum_\rho m_\rho \ind_{K_{\phi, r}^F}^{G_r^F} \k_\phi \otimes \rho,\] where $\rho$ ranges over irreducible $L_0^F$-modules with $m_\rho = \<\rho, R_{T, U, 0}^L(\phi_{-1})\>_{L_0^F}$.
\end{theorem}
\begin{proof}
The equalities follows from the decomposition $Z_{\phi, U, r} = \cup_{G_r^F / K_{\phi, r}^F} Z_{\phi, U, r}^K$ and Proposition \ref{tensor}. It remains to show  $\ind_{K_{\phi, r}^F}^{G_r^F} \k_\phi \otimes \rho$ are pairwise non-isomorphic irreducible $G_r^F$-modules. By Lemma \ref{intertwine}, it suffices to show $\k_\phi \otimes \rho$ are pairwise non-isomorphic irreducible $K_{\phi, r}^F$-modules. As $\k_\phi$ is an irreducible $H_{\phi, r}^F$-module and the action of $H_{\phi, r}^F$ on $\rho$ is trivial, any nontrivial $K_{\phi, r}^F$-submodule of $\k_\phi \otimes \rho$ is of the form $\k_\phi \otimes \t$, where $\t$ is some nontrivial $K_{\phi, r}^F$-submodule of $\rho$. In view that $\rho$ is an irreducible $L_r^F$-module, it follows that $\k_\phi \otimes \rho$ is an irreducible $K_{\phi, r}^F$-module.

Suppose that $\k_\phi \otimes \rho \cong \k_\phi \otimes \rho'$ for some $\rho \ncong \rho'$, that is, for any $x \in H_{\phi, r}^F$ and $y \in L_r^F$ we have \[\tr_{\k_\phi}(x y) \tr_\rho(y) = \tr_{\k_\phi}(x y) \tr_{\rho'}(y).\] Choose $y_0 \in L_r^F$ such that $\tr_\rho(y_0) \neq \tr_{\rho'}(y_0)$. Thus $\tr_{\k_\phi}(x y_0) = 0$ for any $x \in H_{\phi, r}^F$. By Burnside's theorem, the linear endomorphisms $\k_\phi(x)$ for $x \in H_{\phi, r}^F$ span the endomorphism ring of the underlying linear space of $\k_\phi$. This means that $\k_\phi(y_0)$ is the trivial endomorphism, which is impossible. The proof is finished.
\end{proof}

\section{Application on supercuspidal representations} \label{sec-app}

In this section, we show that a large class of irreducible supercuspidal representations of the $p$-adic group $G^F$ can be realized through higher Deligne-Lusztig representations $R_{T, U, r}^G(\phi)$.

\subsection{Yu's construction} \label{subsec: cusp-datum} Recall that a generic cuspidal datum of $G$ is a triple
\[ \Xi = (\L, \tx, \rho),\] where
\begin{itemize}
    \item $\L = (G^i, \phi_i, r_i)_{0 \le i \le d}$ is a generic datum as in \S \ref{subsec:Howe} such that $Z(G^0)/ Z(G)$ is anisotropic;

    \item $\tx \in \CB(G^0, k) \subseteq \CB(G, k)$, whose image $\bar \tx$ in $\CB(G_\der^0)$ is a vertex;

    \item $\rho$ is an irreducible $(G^0)_{\bar{\tx}}^F$-module such that $\rho|_{(G^0)_\tx^F}$ contains the inflation of a cuspidal representation of the reductive quotient of $(G^0)_\tx^F$.
\end{itemize}
Here $(G^0)_{\bar\tx} \supseteq (G^0)_\tx$ denotes the stabilizer of $\bar\tx$ in $G^0$. Moreover, we say $\Xi$ is normalized if $\L$ is normalized as in \S \ref{subsec:Howe}.

\smallskip

In \cite{Yu}, Yu constructed an irreducible supercuspidal representation $\pi_\Xi$ for each generic cuspidal datum $\Xi$. In \cite{HM}, Hakim and Murnaghan introduced the notion of $G$-equivalence relation on the set of generic cuspidal data, and proved, under certain assumptions, that any two generic cuspidal data $\Xi$, $\Xi'$ are $G$-equivalent if and only if $\pi_\Xi \cong \pi_{\Xi'}$. Under the assumption (*) on $p$, Kaletha dropped the additional assumptions in the previous result of Hakim and Murnaghan, and proved that each generic cuspidal datum is $G$-equivalent to a normalised one. As a result, we only need to consider normalised generic cuspidal data.

\

Now we fix a normalized generic cuspidal datum $\Xi = (\L, \tx, \rho)$ with $\L = (G^i, \phi_i, r_i)_{0 \le i \le d}$ as above. We assume further that $\Xi = ((G^i, \phi_i, r_i)_{0 \le i \le d}, \tx, \rho)$ is unramified, that is, $G^0$ (and hence any $G^i$) splits over $\brk$. Set $L = G^0$. By the unramified assumption, there exists an unramified maximal torus $T$ of $L$ such that $\tx \in \CA(T, \brk)$ and $T$ contains a maximal $k$-split torus of $L$, see \cite[pp. 585-586]{Yu}.

%The following result is proved in \cite[Lemma 3.6.3]{Kal}.\begin{lemma} \label{Y-H}Let $\l$ be a character of $T^F$. Set $G^{-1} = T$, $r_{-1} = 0$ and $\phi_{-1} = \l$. Then the triple $(G^i, \phi_i, r_i)_{-1 \le i \le d}$ is a Howe factorization of $\phi^T \l$. \end{lemma}

Let $K = K_\L$, $K^+ = K_\L^+$, $H = H_\L$ and $\chi = \chi_\L$ be defined in \S\ref{subsec:Howe} with respect to the generic datum $\L$. Set $\tilde K = H L_{\bar{\tx}}$.

Let $\k(\L)$ denote the induced Weil-Heisenberg representation of $\tilde K^F$, that is, $\k(\L)) |_{H^F}$ is inflated from the unique Heisenberg representation of $H^F / \ker\chi$ with central character $\chi |_{(K^+)^F / \ker\chi}$, and moreover, $L_{\bar\tx}^F$ acts on $\k(\L)$ by the character $\prod_{i=0}^d \phi_i|_{L_{\bar \tx}^F}$ times the induced Weil representation. We refer to \cite[\S 2.5]{F21a} for the precise construction of $\k(\L)$. We also view $\rho$ as a $\tilde K^F$-module on which $H^F$-acts trivially. The following result is proved in \cite{Yu} and \cite{F21a}.
\begin{theorem}
    The compact induction \[\pi_\Xi := \text{c-}\ind_{\tilde K^F}^{G^F} \k(\L) \otimes \rho,\] is an irreducible supercuspidal representation of $G^F$.
\end{theorem}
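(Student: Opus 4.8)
The plan is to invoke the standard irreducibility criterion for representations compactly induced from an open subgroup that is compact modulo the centre. First I would record that $\tilde K^F = H_\phi^F L_{\bar{\tx}}^F$ is an open subgroup of $G^F$ which is compact modulo $Z^F$: openness holds because $H_\phi^F$ contains a Moy--Prasad subgroup of $G^F$, and compactness modulo $Z^F$ holds because $L_{\bar{\tx}}^F$ is compact modulo $(Z(L)^\circ)^F$ while the ellipticity of $T$ forces $(Z(L)^\circ)^F$ to be compact modulo $Z^F$. Next I would check that $\sigma := \k(\phi)_{{\rm Weil}} \otimes \rho$ is an irreducible smooth $\tilde K^F$-module. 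Smoothness is immediate. For irreducibility one uses that $\k(\phi)$ is \emph{the} irreducible Heisenberg representation of $H_\phi^F/N_\phi$ (Proposition~\ref{Heisenberg}), that $\rho$ is irreducible, and that $H_\phi^F$ and $L_{\bar{\tx}}^F$ together generate $\tilde K^F$ and agree on the common central subgroup $(K_\phi^+)^F$ via the character $\phi^\natural$; a Clifford-theory argument for the normal subgroup $H_\phi^F \trianglelefteq \tilde K^F$ then gives the irreducibility of $\sigma$.

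By the compact-induction criterion, $\pi_\Sigma$ is then irreducible, and automatically supercuspidal since it is induced from a subgroup compact modulo the centre, \emph{provided} the intertwining condition
\[
\Hom_{\tilde K^F \cap\, {}^g\tilde K^F}\bigl(\sigma,\, {}^g\sigma\bigr) = 0 \qquad \text{for all } g \in G^F \setminus \tilde K^F
\]
holds, where ${}^g\sigma(x) = \sigma(g\i x g)$. Following Yu, I would split the verification into two steps. (i) Since $(K_\phi^+)^F$ acts on $\sigma$ through $\phi^\natural$, which is a product of the characters $\hat\phi_i$ attached to the $\phi_i$, any intertwining $g$ must intertwine $\phi^\natural$; the $(G^i,G^{i+1})$-genericity of each $\phi_i$ then forces $g$ into a much smaller subset of $G^F$ --- this is the same mechanism already used in the proof of Lemma~\ref{intertwine}. (ii) On that smaller subset one analyses the remaining intertwining by using the explicit Heisenberg--Weil structure of $\k(\phi)_{{\rm Weil}}$ together with the cuspidality of $\rho$ (whose restriction to $L_0^F$ contains an inflated cuspidal representation, so that $\rho$ itself intertwines only within $L_{\bar{\tx}}^F$), and concludes that $g \in \tilde K^F$.

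The main obstacle is step (ii), and inside it the contribution of the Weil-representation factor $\k(\phi)_{{\rm Weil}}$: this is precisely the point at which Yu's original argument contained a gap, later repaired by Fintzen in \cite{F21a}. Accordingly I would not redo this computation but instead quote the relevant intertwining results of \cite{Yu} and \cite{F21a}, which together show that the intertwining set of $\sigma$ in $G^F$ is exactly $\tilde K^F$; combined with step (i) and the reductions above, this yields the irreducibility --- hence the supercuspidality --- of $\pi_\Sigma$. Since the statement is taken verbatim from \cite{Yu} and \cite{F21a}, the only genuinely new point is to observe that our datum $\Xi$ really is a generic cuspidal $G$-datum in Yu's sense, which is built into the way we have set it up: the genericity condition of \cite[\S 9]{Yu} coincides with the one imposed here by \cite[Lemma 3.6.8]{Kal}, under our standing hypothesis on $p$.
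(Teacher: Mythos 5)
Your proposal is correct and follows the same route as the paper, which simply states that the result is proved in \cite{Yu} and \cite{F21a} and gives no independent argument. The only addition you make is to spell out the standard compact-induction mechanism and the role of the Fintzen repair of Yu's intertwining computation, together with the observation that the datum $\Xi$ is indeed a generic cuspidal $G$-datum; all of this is consistent with the paper's setup and with the cited sources.
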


\subsection{The representation $\rho_0^\flat$} Put $Z = Z(G)$. Let $\rho_0$ be an irreducible representation of $Z^F L_\tx^F$ which appears in $\rho |_{Z^F L_\tx^F}$. In other words, $\rho$ is a direct summand of $\ind_{Z^F L_\tx^F}^{L_{\bar\tx}^F} \rho_0$. By assumption on $\rho$, the restriction $\rho_0 |_{L_\tx}$ is inflated from a cuspidal representation of $L_0^F$. Recall that $L_0$ denotes the reductive quotient of $L_\tx$. Let $\o = \rho |_{Z^F} = \rho_0 |_{Z^F}$ be the central character of $\rho$ or $\rho_0$.
\begin{lemma} \label{summand}
    The representation $\k(\L) \otimes \rho$ of $\tilde K^F$ is a direct summand of $\ind_{Z^F K^F}^{\tilde K^F} \k(\L) \otimes \rho_0$. Hence $\pi_\Xi$ is a direct summand of $\text{c-}\ind_{Z^F K^F}^{G^F} \k(\L) \otimes \rho_0$.
\end{lemma}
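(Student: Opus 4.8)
The plan is to deduce the lemma from the already-noted fact that $\rho$ is a direct summand of $\ind_{Z^F L_\tx^F}^{L_{\bar\tx}^F}\rho_0$, together with the projection formula and transitivity of compact induction. The key intermediate claim is that, as $\tilde K^F$-modules on which $H_\phi^F$ acts trivially,
\[\ind_{Z^F K_\phi^F}^{\tilde K^F}\rho_0\;\cong\;\ind_{Z^F L_\tx^F}^{L_{\bar\tx}^F}\rho_0 .\]
Granting this, $\rho$ (viewed as a $\tilde K^F$-module as in the setup) is a direct summand of $\ind_{Z^F K_\phi^F}^{\tilde K^F}\rho_0$; tensoring with the $\tilde K^F$-module $\k(\phi)_{{\rm Weil}}$ and invoking the projection formula
\[\k(\phi)_{{\rm Weil}}\otimes\ind_{Z^F K_\phi^F}^{\tilde K^F}\rho_0\;\cong\;\ind_{Z^F K_\phi^F}^{\tilde K^F}\bigl(\k(\phi)_{{\rm Weil}}\otimes\rho_0\bigr)\]
then yields the first assertion. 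The consequence follows by applying the additive functor $\text{c-}\ind_{\tilde K^F}^{G^F}$ (which preserves direct summands) and using transitivity together with $\text{c-}\ind_{Z^F K_\phi^F}^{\tilde K^F}=\ind_{Z^F K_\phi^F}^{\tilde K^F}$, valid since $Z^F K_\phi^F$ has finite index in $\tilde K^F$, so that $\text{c-}\ind_{\tilde K^F}^{G^F}\circ\ind_{Z^F K_\phi^F}^{\tilde K^F}=\text{c-}\ind_{Z^F K_\phi^F}^{G^F}$.

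To prove the intermediate claim I would argue as follows. First, $H_\phi^F$ is normal in $\tilde K^F=H_\phi^F L_{\bar\tx}^F$, because $L_{\bar\tx}\subseteq G^0$ fixes $\tx$ and hence normalizes each Moy--Prasad subgroup occurring in the definition of $H_\phi$; moreover $H_\phi^F\subseteq Z^F K_\phi^F$ and $H_\phi^F$ acts trivially on $\rho_0$ by construction, so $\ind_{Z^F K_\phi^F}^{\tilde K^F}\rho_0$ has $H_\phi^F$ in its kernel and is inflated from the quotient modulo $H_\phi^F$. Next, since $\tilde K^F=H_\phi^F L_{\bar\tx}^F$ and $Z^F K_\phi^F=H_\phi^F Z^F L_\tx^F$, the inclusions $L_{\bar\tx}^F\hookrightarrow\tilde K^F$ and $Z^F L_\tx^F\hookrightarrow Z^F K_\phi^F$ induce surjections onto the respective quotients by $H_\phi^F$, with kernels $L_{\bar\tx}^F\cap H_\phi^F$ and $Z^F L_\tx^F\cap H_\phi^F$. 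Because $H_\phi\subseteq G_\tx$ and $L_{\bar\tx}\cap G_\tx=L_\tx$, both kernels coincide with $L_\tx^F\cap H_\phi^F$; for the central factor one notes that if $zg\in H_\phi$ with $z\in Z$ and $g\in L_\tx$ then $z=(zg)g^{-1}\in G_\tx\cap Z\subseteq L_\tx$, whence $zg\in L_\tx^F\cap H_\phi^F$. Since $\rho_0$ is trivial on $L_\tx^F\cap H_\phi^F\subseteq H_\phi^F$, inflation commutes with induction, and both sides of the claim become the inflation to $\tilde K^F$, along the isomorphism $\tilde K^F/H_\phi^F\cong L_{\bar\tx}^F/(L_\tx^F\cap H_\phi^F)$, of one and the same induced module over the resulting finite groups; this proves the claim.

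I expect the only genuinely delicate step to be the subgroup bookkeeping of the previous paragraph: verifying that $L_{\bar\tx}^F\cap H_\phi^F$ and $Z^F L_\tx^F\cap H_\phi^F$ both lie in $L_\tx$ (so that, being contained in $H_\phi^F$, they are annihilated by $\rho_0$), which is precisely what lets the comparison of the two inductions be reduced to finite-group representation theory. Everything else is formal: the projection formula, additivity and transitivity of compact induction, and the equivalence "$\rho_0$ occurs in $\rho|_{Z^F L_\tx^F}$" $\Leftrightarrow$ "$\rho$ is a direct summand of $\ind_{Z^F L_\tx^F}^{L_{\bar\tx}^F}\rho_0$" (Frobenius reciprocity in a semisimple category, already used above), together with the fact that tensoring with a fixed module and applying an additive functor both preserve direct summands.
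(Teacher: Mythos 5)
Your argument is correct, and the paper in fact states Lemma \ref{summand} without proof, so there is nothing in the source to compare against. Your route is the natural one implied by the setup, and in particular it correctly exploits the sentence immediately before the lemma, where the paper arranges that $\rho$ is a direct summand of $\ind_{Z^F L_\tx^F}^{L_{\bar\tx}^F}\rho_0$ and views $\rho_0$ on $Z^F K_\phi^F = Z^F L_\tx^F H_\phi^F$ with $H_\phi^F$ acting trivially. The only nontrivial verification is the one you flag: that both $L_{\bar\tx}^F\cap H_\phi^F$ and $Z^F L_\tx^F\cap H_\phi^F$ coincide with $L_\tx^F\cap H_\phi^F$ (equivalently, lie in $L_\tx$ and hence in the kernel of $\rho_0$), so that passing to the quotient by $H_\phi^F$ identifies $\ind_{Z^F K_\phi^F}^{\tilde K^F}\rho_0$ with the inflation of $\ind_{Z^F L_\tx^F}^{L_{\bar\tx}^F}\rho_0$. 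Your subgroup bookkeeping for this ($z=(zg)g^{-1}\in Z\cap G_\tx\subseteq L\cap G_\tx=L_\tx$, and $L_{\bar\tx}\cap G_\tx=L_\tx$ since $L_\tx=L\cap G_\tx$ and $L_{\bar\tx}\subseteq L$) is sound. The remaining steps — projection formula for tensoring with $\k(\phi)_{\rm Weil}$, additivity and transitivity of (compact) induction, Frobenius reciprocity — are all standard and correctly applied, and the second assertion follows as you say by applying $\text{c-}\ind_{\tilde K^F}^{G^F}$ and collapsing the composite to $\text{c-}\ind_{Z^F K_\phi^F}^{G^F}$.
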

\begin{proof}
    It follows from the observation that $\k(\L) \otimes \rho_0$ is a $Z^F K^F$-submodule of $\k(\L) \otimes \rho$.
\end{proof}

Let $B = TU$ be a Borel subgroup such that $(G^i)_{0 \le i \le d}$ is standard with respect to $U$. Let $\phi = \prod_{i=0}^d \phi_i |_{T^F}$ and $\k_\phi= \k_{\phi, U}$ be the $K^F$-module constructed in \S\ref{subsec:kappa}. Moreover, we view $\k_\phi$ as a $Z^F K^F$-module on which $Z^F$ acts via the character $\phi$. Note that $(\L, 1)$ is a Howe factorization of $\phi$. Note that $\phi^\natural  = \chi$. Hence $\k_\phi |_{H^F} \cong \k(\L) |_{H^F}$ by Proposition \ref{Heisenberg}.

The following lemma is inspired from \cite[Proposition 18.5]{Kim}
\begin{lemma} \label{iso}
    There exists an irreducible module $\rho_0^\flat$ of $Z^F K^F$ such that \[ \k(\L) \otimes \rho_0 \cong \k_\phi \otimes \rho_0^\flat \] as $Z^F K^F$-modules. Moreover, $\rho_0^\flat |_{Z^F} = \o$ and the restriction of $\rho_0^\flat$ to $H^F$ is trivial.
\end{lemma}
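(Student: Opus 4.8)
The plan is to exploit that $\k(\phi)_{{\rm Weil}}$ and $\k(\phi)_U$ restrict, on the normal subgroup $H_\phi^F$ of $Z^F K_\phi^F$, to one and the same module --- the Heisenberg representation $\k(\phi)$ of Proposition \ref{Heisenberg} --- and that two extensions of an irreducible representation to an overgroup differ only by a one-dimensional twist.

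First I would record that $\k(\phi)_U|_{H_\phi^F} \cong \k(\phi)$ by the construction of $\k(\phi)_U$ in \S\ref{sec-decomp}, while $\k(\phi)_{{\rm Weil}}|_{H_\phi^F} \cong \k(\phi)$ by the construction of $\k(\phi)_{{\rm Weil}}$ recalled above; since $\k(\phi)$ is irreducible, both $\k(\phi)_U$ and $\k(\phi)_{{\rm Weil}}$ are automatically irreducible over $Z^F K_\phi^F$. By Schur's lemma the space $\Hom_{H_\phi^F}(\k(\phi)_U, \k(\phi)_{{\rm Weil}})$ is a line; the conjugation action of $Z^F K_\phi^F$ on it is trivial on $H_\phi^F$ (an intertwiner is $H_\phi^F$-equivariant, using normality of $H_\phi^F$), so it is given by a character $\chi$ of $Z^F K_\phi^F / H_\phi^F$, and the evaluation map furnishes an isomorphism of $Z^F K_\phi^F$-modules
\[ \k(\phi)_U \otimes \chi \;\cong\; \k(\phi)_{{\rm Weil}}. \]
I would then set $\rho_0^\flat := \chi \otimes \rho_0$: it is irreducible because $\rho_0$ is and $\chi$ is a character, it is trivial on $H_\phi^F$ because $\chi$ and $\rho_0$ both are, and tensoring the displayed isomorphism with $\rho_0$ yields $\k(\phi)_{{\rm Weil}} \otimes \rho_0 \cong \k(\phi)_U \otimes \rho_0^\flat$.

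The remaining assertions concern central characters. Since $\rho_0$ occurs in $\rho|_{Z^F L_\tx^F}$ and $Z^F$ acts by a scalar on the irreducible $\rho$, one has $\rho_0|_{Z^F} = \rho|_{Z^F}$; so $\rho_0^\flat|_{Z^F} = \rho_0|_{Z^F}$ will follow from $\chi|_{Z^F} = 1$. For this I would restrict the displayed isomorphism to $Z^F$ and compare: $Z^F$ acts on $\k(\phi)_U$ by $\phi|_{Z^F}$ by construction, and it acts on $\k(\phi)_{{\rm Weil}}$ by $(\phi_0 \cdots \phi_d)|_{Z^F}$, because an element of $Z^F$, being central in $L$, acts trivially through the Weil (metaplectic) factor; and $(\phi_0\cdots\phi_d)|_{Z^F} = \phi|_{Z^F}$ since $\phi_{-1}$ is trivial on $T$. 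Hence $\chi|_{Z^F} = 1$, which finishes the argument.

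I expect the main difficulty to lie not in this formal skeleton but in the identification it rests on: that the Heisenberg $H_\phi^F$-module $\k(\phi)$ produced geometrically in Proposition \ref{Heisenberg} is the one used to build $\k(\phi)_{{\rm Weil}}$ in Yu's construction, and that $Z^F$ really does act trivially through the Weil factor. These are matters of reconciling the conventions of \cite{Yu} and \cite{F21a} with those of \S\ref{sec-CR}--\S\ref{sec-decomp}, and the preceding sections have arranged most of what is needed.
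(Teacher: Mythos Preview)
Your argument is correct and is in fact more direct than the paper's. The paper does not isolate the twisting character $\chi$ explicitly; instead it embeds $\k(\phi)_{{\rm Weil}}\otimes\rho_0$ into the induced module $\theta=\ind_{Z^F H_\phi^F}^{Z^F K_\phi^F}\k$ (where $\k$ is the unique extension of $\k(\phi)$ to $Z^F H_\phi^F$ with central character $\phi\cdot\omega_\rho$), then shows via a character computation that $\theta$ coincides with $\k(\phi)_U\otimes\ind_{Z^F H_\phi^F}^{Z^F K_\phi^F}\omega_\rho$, and finally reads off $\rho_0^\flat$ as an irreducible constituent of $\ind\,\omega_\rho$. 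Your route through $\Hom_{H_\phi^F}(\k(\phi)_U,\k(\phi)_{{\rm Weil}})$ and the resulting one-dimensional twist is the standard Clifford-theoretic shortcut: it yields the sharper statement $\rho_0^\flat=\chi\otimes\rho_0$ and avoids the induced-character calculation entirely. Both approaches ultimately rest on the same two inputs, namely that $\k(\phi)_U$ and $\k(\phi)_{{\rm Weil}}$ agree on $H_\phi^F$ and that $Z^F$ acts on $\k(\phi)_{{\rm Weil}}$ by $\phi$ (your observation that the conjugation action of the center on the Heisenberg quotient, hence the Weil factor, is trivial). Your closing caveat is well placed but already handled by the paper's conventions: $\k(\phi)_{{\rm Weil}}$ is \emph{defined} in \S7 as an extension of the geometric $\k(\phi)$ of Proposition~\ref{Heisenberg}, so the identification you worry about is built in.
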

\begin{proof}
Let $\k$ be the irreducible $Z^F H^F$-module such that $\k|_{Z^F} = \phi|_{Z^F} \o$ and $\k |_{H^F} = \k(\L) |_{H^F} = \k_\phi |_{H^F}$. By construction, $\k$ appears in the $Z^F H^F$-module $\k(\L) \otimes \rho_0$. As $\k(\L) \otimes \rho$ is an irreducible $Z^F K^F$-module, This means that \[\k(\L) \otimes \rho_0 \subseteq \th := \ind_{Z^F H^F}^{Z^F K^F} \k.\] On the other hand, consider \[\vartheta :=  \k_\phi \otimes \ind_{Z^F H^F}^{Z^F K^F} \hat\o,\] where $\hat\o$ is the representation given by $\hat\o(z h) = \o(z)$ for $z \in Z^F$ and $h \in H^F$. As $H^F$ is a normal subgroup of $Z^F K^F$, it suffices to prove $\th = \vartheta$. To this end, we show that their traces coincide. First note that $Z^F H^F$ is a normal subgroup of $Z^F K^F$. Hence $\th(g) = \vartheta(g) = 0$ if $g \in Z^F K^F \setminus Z^F H^F$. On the other hand, for $g = z h \in Z^F H^F$ with $z \in Z^F$ and $h \in H^F$, we have \begin{align*} \tr_\th(g) &=  \sum_{x \in Z^F K^F / Z^F H^F} \phi(z) \o(z) \tr_{\k_\phi}(x\i h x); \\ \tr_\vartheta(g) &=  \sum_{x \in Z^F K^F / Z^F H^F} \phi(z) \o(z) \tr_{\k_\phi}(h).\end{align*} Hence it suffices to show $\k_\phi |_{H^F} \cong ({}^x \k_\phi) |_{H^F}$ for any $x \in K^F$. Indeed, as $[K^F, (K^+)^F] \subseteq \ker\phi^\natural$, the irreducible representations $\k_\phi |_{H^F}$ and $({}^x \k_\phi) |_{H^F}$ has the same central character. By the uniqueness of Heisenberg representations, we have $\k_\phi |_{H^F} \cong ({}^x \k_\phi) |_{H^F}$ as desired.
\end{proof}

\subsection{Cuspidality of $\rho_0^\flat$} For any reductive group over a field we denote by $r_{ss}(M)$ the (absolute) semisimple rank of $M$.
\begin{proposition} \label{cuspidal}
    Assume that $q > r_{ss}(G) + 1$. Then $\rho_0^\flat |_{L_\tx^F}$ is inflated from an irreducible cuspidal representation of $L_0^F$.
\end{proposition}

Before proving this proposition, we need some preparations. We write $\sfT = T_0$ and $\sfL = L_0$. By the choice of $T$, $\sfT$ is a maximally split maximal torus of $\sfL$. Hence there is a $F$-stable Borel subgroup $\sfB \subseteq \sfL$ containing $\sfT$. Let $\Psi_\sfL$ be the root system of $\sfT$ in $\sfL$. Denote by $\Psi_\sfL^+$ the set of (positive) roots appearing in $\sfB$. Let $\D_\sfL \subseteq \Psi_\sfL^+$ be the set of simple roots. For $J \subseteq \D$ let $\Psi_J \subseteq \Psi$ be the root subsystem spanned by $J$.

Let $\sfB \subseteq \sfP \subsetneq \sfL$ be a maximal proper standard parabolic subgroup with unipotent radical $\sfN$. Denote by $\Psi_\sfP$ and $\Psi_\sfN$ the sets of roots appearing $\sfP$ and $\sfN$ respectively. Let $\L_\sfP$ be the set of $F$-fixed cocharacters $\l \in X_*(T)$ such that  \[\Psi_\sfP = \{\a \in \Psi_\sfL; \eta(\a) \ge 0\}, \text{ and hence } \Psi_\sfN = \{\a \in \Psi_\sfL; \eta(\a) > 0\}.\]

Now we construct a particular element of $\L_\sfP$. First note that there is a unique $F$-orbit $\CO_\sfP$ of $\D_\sfL$ such that $\Psi_\sfP = \Psi^+ \cup \Psi_{\D \setminus \CO_\sfP}$.
Set \[\l_\sfP = \sum_{\a \in \CO_\sfP} \o_{\a, \sfL}^\vee \in X_*(T)_\BQ,\] where $\o_{\a, \sfL}^\vee$ denotes the fundamental coweight corresponding to the simple root $\a \in \D_\sfL$. Let $n_\sfP \in \BZ_{\ge 1}$ be the smallest positive integer such that $\eta_\sfP := n_\sfP \l_\sfP^\vee$ lies in the coroot lattice of $\Psi$. It is clear that $\eta_\sfP \in \L_\sfP$.

\begin{lemma} \label{max}
    If $\Psi_\sfL$ is of type $A_{r_{ss}(\sfL)}$ and $F$ acts trivially on $\Psi_\sfL$, then we have $\max_{\g \in \Psi_\sfN}\eta_\sfP(\g) = r_{ss}(\sfL) + 1$. Otherwise, $\max_{\g \in \Psi_\sfN}\eta_\sfP(\g) \le r_{ss}(\sfL)$.
\end{lemma}
\begin{proof}
    It follows directly by a case-by-case computation.
\end{proof}

For $\eta \in \L_\sfP$ we denote by $\G_\eta \subseteq \sfL$ be the subgroup generated by $\sfN$ and the one parameter subgroup $\eta: \BG_m \to \sfL$.
\begin{lemma} \label{radical}
    Let $\eta \in \G_\sfP$ with $q-1 > \max_{\g \in \Psi_\sfN} \eta(\g)$, then $[\G_\eta^F, \sfN^F] =  \sfN^F$.
\end{lemma}
\begin{proof}
    For $\g \in \Psi_\sfL$ we denote by $\sfL^\g$ the corresponding root subgroup. By assumption on $\eta$, there exists $z \in \eta(\BF_q) \subseteq \G_\eta^F$ such that
    \[\tag{a} \text{ the map $x \mapsto [z, x]$ gives an automorphism of $\sfL^\g$ for $\g \in \Psi_\sfN$.}\]

    For $i \in \BZ_{\ge 1}$ let $\sfN_i$ be the normal subgroup of $\sfN$ generated by the root subgroups $\sfL^\g$ with $\eta(\g) \ge i$. Fix a representative set $\{\g_1^i, \dots, \g_{n_i}^i\}$ for the $F$-orbits of the subset $\{\g \in \Psi_\sfN; \eta(\g) = i\}$. Then the natural projection map $\sfN_i \to \prod_{j=1}^{n_i} \sfL^{\g_j^i}$ gives an isomorphism of $\BF_q$-linear spaces \[\tag{b} \sfN_i^F / \sfN_{i+1}^F \cong (\sfN_i/ \sfN_{i+1})^F \cong \bigoplus_{j=1}^{n_i} (\sfL^{\g_j^i})^{F^{m_j}},\] where $m_j$ denotes the order of the $F$-orbit of $\g_j^i$.

    By (a) and (b), the map $x \mapsto [z, x]$ gives an automorphism of $\sfN_i^F / \sfN_{i+1}^F$ for $i \in \BZ_{\ge 1}$. Thus $\sfN^F$ is generated by the elements $[z, x]$ for $x \in \sfN^F$, and hence $[\G_\eta^F, \sfN^F] = \sfN^F$ as desired.
\end{proof}

Now we are ready to show the cuspidality of $\rho_0^\flat$. The proof is inspired from the proof of \cite[Theorem 3.1]{F21a}
\begin{proof}[Proof of Proposition \ref{cuspidal}]
    If $L = G$, then $\k(\L) \cong \k_\phi \cong \phi_0$ and hence $\rho_0^\flat$ is isomorphic to the cuspidal representation $\rho_0$ as desired.

    Assume $L \subsetneq G$. Since $\bar\tx$ is vertex of $\CB(L_\der, k)$, we have $r_{ss}(G) \ge r_{ss}(L) + 1 = r_{ss}(\sfL) + 1$. Suppose $\rho_0^\flat$ is not cuspidal. Then up to conjugation by $\sfL^F$, there exists a standard maximal proper parabolic subgroup $\sfB \subseteq \sfP \subsetneq \sfL = L_0$ with unipotent radical $\sfN$ such that $\rho_0^\flat |_{\sfN^F}$ contains the trivial $\sfN^F$-module. Let $\eta \in \L_\sfP$ such that $q-1 > \max_{\g \in \Psi_\sfN} \eta(\g)$. Due to Lemma \ref{max}, such $\eta$ exists if $q > r_{ss}(G) + 1$. By Lemma \ref{radical} we have $\sfL^F = [\G_\eta^F, \G_\eta^F]$.

    Let $\Phi_\eta^\pm$ and $\Phi_\eta^0$ be the set of roots $\g \in \Phi(G, T)$ such that $\pm \eta(\g) > 0$ and $\eta(\g) = 0$ respectively. Let $U_\eta^\pm = \prod_{\a \in \Phi_\eta^\pm} G^\a$ and $M_\eta \subseteq G$ the Levi subgroup generated by $T$ and $G^\a$ for $\a \in \Phi_\eta^0$.

    Let $\CH = H^F / \ker\phi^\natural$ be the Heisenberg $p$-group, whose center is denoted by $C = (K^+)^F / \ker\phi^\natural$. Let $V = \CH / C = H^F / (K^+)^F$ be the symplectic quotient. Then \[V = V_+ \oplus V_0 \oplus V_-,\] where $V_\pm$ and $V_0$ are the natural images of $(H \cap U_\eta^\pm)^F$ and $(H \cap M_\eta)^F$ in $V$ respectively. Similarly, let $\CH_\pm$ and $\CH_0$ be the natural images of $(H \cap U_\eta^\pm)^F$ and $(H \cap M_\eta)^F$ in $\CH$ respectively. Then $\CH_0$ is the inverse image of $V_0$ under the natural projection $\pi: \CH \to V$, which is also a Heisenberg $p$-group. We fix a totally isotropic subspace $\CL_0 \subseteq \CH_0$ such that $\pi |_{\CL_0}$ is a bijection from $\CL_0$ to a Lagrangian subspace of $V_0$. We set \[\CL = \CH_+ \oplus \CL_0.\] Then $\CL \subseteq \CH$ is a totally isotropic subspace such that $\pi |_\CL$ is a bijection from $\CL$ to a Lagrangian subspace of $V$. Moreover, by definition we have $y\i \Ad_z(y) \in \CH_+$ for any $z \in \G_\eta^F$ and $y \in \CH_+ \oplus \CH_0$. Here $\Ad_z: \CH \to \CH$ denotes the natural conjugation action of $z \in \G_\eta^F$ on $\CH$. In particular, $\CL$ is normalized by $\G_\eta^F$. Thus $\G_\eta^F$ acts on the $\CL$-invariant subspace \[ \k(\L)^\CL \otimes \rho_0 = (\k(\L) \otimes \rho_0)^\CL \cong (\k_\phi \otimes \rho_0^\flat)^\CL = (\k_\phi)^\CL \otimes \rho_0^\flat.\] As $\k(\L) |_\CH \cong \k_\phi |_\CH$ are the same irreducible Heisenberg representations of $\CH$, $\k(\L)^\CL$ and $(\k_\phi)^\CL$ are both one-dimensional. Thus $\sfN^F = [\G_\eta^F, \G_\eta^F]$ acts trivially on $\k(\L)^\CL$ and $(\k_\phi)^\CL$. Then we have \[((\k_\phi)^\CL \otimes \rho_0^\flat)^{\sfN^F} = (\k_\phi)^\CL \otimes (\rho_0^\flat)^{\sfN^F} \neq \{0\}.\] On the other hand, since $\rho_0$ is cuspidal, we have \[(\k(\L)^\CL \otimes \rho_0)^{\sfN^F} = \k(\L)^\CL \otimes (\rho_0)^{\sfN^F} = \{0\}.\] This is a contradiction and the proof is finished.
\end{proof}

\subsection{Realization of supercuspidal representations} By Proposition \ref{cuspidal}, \cite[Corollary 7.7 \& Proposition 8.2]{DL} and the assumption that $\tx \in \CB(L, k)$ is a vertex, there exist an unramified elliptic maximal torus $S$ of $L$ with $\tx \in \CA(S, \brk)$ and a character $\l$ of $S_\tx^F$ of depth zero such that $\rho_0^\flat |_{L_\tx^F}$ is inflated from a direct summand of $R_{S, 0}^L(\l)$. In particular, $\l |_{Z_\tx^F} = \rho_0^\flat |_{Z_\tx^F} = \o |_{Z_\tx^F}$. Moreover, since $Z(L) / Z(G)$ is anisotropic, $S$ is also an elliptic maximal torus of $G$. As $S$ is unramified and elliptic, we have $S^F = Z^F S_\tx^F$. Then we can extend $\l$ to a character of $S^F$ whose restriction to $Z^F$ is $\o$. We still denote it by $\l$. We extend the $L_\tx^F$-module $R_{S, 0}^L(\l)$ to a $Z^F K^F$-module such that the action of $z h$ on $R_{S, 0}^L(\l)$ for $z \in Z^F$ and $h \in H^F$ is given by $\l(z)$.

Let $\psi = \l \prod_{i=0}^d \phi_i|_{S^F}$ be a character of $S^F$. Let $g \in L_\tx$ such that $g T g\i = S$. Set $V = g U g\i$. We view $R_{S, V, r_\psi}^G(\psi)$ as a representation of $Z^F G_\tx^F = S^F G_\tx^F$ such $Z^F$ acts via the character $\psi$.

Since $g \in L = G^0$ and $(G^i)_{0 \le i \le d}$ is standard with respect to $U$, $(G^i)_{0 \le i \le d}$ is also standard with respect to $V$. Let $\k_\psi = \k_{\psi, V}$ be the $K^F$-module constructed in \S\ref{sec-decomp}.
\begin{lemma} \label{S-T}
    We have $\k_{\psi, V} \cong \k_{\phi, U}$ as $K^F$-modules.
\end{lemma}
\begin{proof}
    As $(\L, 1)$ and $(\L, \l)$ are Howe factorizations of $\phi$ and $\psi$ respectively, by definition we have $\phi^\natural = \psi^\natural = \chi_\L = \chi$, $\CI_{\psi, V} = g \CI_{\phi, U} g\i$, $H_\psi = H_\phi = H$ and $\CI_{\psi, V} = g \CI_{\phi, U} g\i$. Since $(G^i)_{0 \le i \le d}$ is standard with respect to $U$ we have \[\CI_{\psi, V} \cap H_\psi = g (\CI_{\phi, U} \cap H_\phi) g\i = \CI_{\phi, U} \cap H_\phi.\] Therefore, $\bar Z_{\psi, V, r}^H = \bar Z_{\phi, U, r}^H$ for $r \ge r_\phi = r_\psi$. Note that $(K^+)^F \subseteq H^F$ also acts on $\bar Z_{\psi, V, r}^H$ by right multiplication. It follows from Lemma \ref{coincidence} that \[\k_{\psi, V} =  H_c^*(\bar Z_{\psi, V, r}^H, \ov\BQ_\ell)[\chi] = (\bar Z_{\phi, U, r}^H, \ov\BQ_\ell)[\chi] = \k_{\phi, U}.\] The proof is finished.
\end{proof}

The main result of this section is as follows.
\begin{theorem} \label{sum}
    Assume that $q > r_{ss}(G) + 1$. The supercuspidal representation $\pi_\Xi$ is a direct summand of $\text{c-}\ind_{Z^F G_\tx^F}^{G^F} R_{S, V, r_\psi}^G(\psi)$ with $r_\psi$ the depth of $\psi$.
\end{theorem}
\begin{proof}
Note that $(\L, \l)$ is a Howe factorization of $\psi$. We have \begin{align*}
    \pi_\Xi &\cong \text{c-}\ind_{\tilde K^F}^{G^F} \k(\L) \otimes \rho \\ &\subseteq \text{c-}\ind_{Z^F K^F}^{G^F} \k(\L) \otimes \rho_0 \\ &\cong \text{c-}\ind_{Z^F K^F}^{G^F} \k_\phi \otimes \rho_0^\flat \\  &\cong \text{c-}\ind_{Z^F K^F}^{G^F} \k_\psi \otimes \rho_0^\flat \\ &\subseteq \text{c-}\ind_{Z^F K^F}^{G^F} \k_\psi \otimes R_{S, 0}^L(\l) \\ &\cong \text{c-}\ind_{Z^F K^F}^{G^F} \k_\psi \otimes R_{S, 0}^L(\l) \\ &\cong \text{c-}\ind_{Z^F G_\tx^F}^{G^F} R_{S, V,r_\psi}^G(\psi),
\end{align*} where the second isomorphism follows from Lemma \ref{iso}, the third one follows from  Lemma \ref{S-T}, and the last one follows from Theorem \ref{decomposition} together with the natural bijection $G_\tx^F / K^F \cong Z^F G_\tx^F / (Z^F K^F)$.
\end{proof}


\begin{thebibliography}{GKP00}

\bibitem{B}
M.~Boyarchenko, \emph{Deligne-Lusztig constructions for unipotent and $p$-adic groups}, arXiv: 1207.5876.

\bibitem{BW}
M.~Boyarchenko and J.~Weinstein, \emph{Maximal varieties and the local Langlands correspondence for $\GL(n)$}, J. Amer. Math. Soc. 29 (2016), 177--236.

\bibitem{Ch20}
C.~Chan, \emph{The cohomology of semi-infinite Deligne-Lusztig varieties}, J. Reine Angew. Math. 768 (2020), 93--147.


\bibitem{Ch24}
C.~Chan, \emph{The scalar product formula for parahoric Deligne-Lusztig inductions}, arXiv:2405.00671.


\bibitem{CI21}
C.~Chan and A.~Ivanov, \emph{Cohomological representations of parahoric subgroups}, Represent. Theory 25 (2021), 1--26.


\bibitem{CI21b}
C.~Chan and A.~Ivanov, \emph{The Drinfeld stratification for $\GL_n$}, Selecta Math. (N.S.) 27 (2021), 54 pp.


\bibitem{CI21c}
C.~Chan and A.~Ivanov, \emph{Affine Deligne-Lusztig varieties at infinite level}, Math. Ann. 380 (2021), 1801--1890.


\bibitem{CI23}
C.~Chan and A.~Ivanov, \emph{On loop Deligne-Lusztig varieties of Coxeter-type for inner forms of $\GL_n$}, Camb. J. Math. 11 (2023), 441--505.


\bibitem{CO}
C.~Chan and M.~Oi, \emph{Geometric L-packets of Howe-unramified toral supercuspidal representations}, J. Eur. Math. Soc. (2023), 1--62.


\bibitem{CS17}
Z.~Chen and A.~Stasinski, \emph{The algebraisation of higher Deligne-Lusztig representations}, Selecta Math. (N.S.) 23 (2017), 2907--2926.

\bibitem{CS23}
Z.~Chen and A.~Stasinski, \emph{The algebraisation of higher level Deligne-Lusztig representations II: odd levels}, arXiv: 2311.05354.


\bibitem{DL}
P.~Deligne and G.~Lusztig, \emph{Representations of reductive groups over finite fields}, Ann. Math. 103 (1976), 103--161.

\bibitem{DI}
O.~Dudas and A.~Ivanov, \emph{Orthogonality relations for deep level Deligne-Lusztig schemes of Coxeter type}, Forum Math. Sigma 12 (2024), 27 pp.


\bibitem{F21a}
J.~Fintzen, \emph{On the construction of tame supercuspidal representations}, Compos. Math. 157 (2021), 2733--2746.


\bibitem{F21b}
J.~Fintzen, \emph{Types for tame  $p$-adic groups}, Ann. of Math. (2) 193 (2021), 303--346.


\bibitem{FKS}
J.~Fintzen, T.~Kaletha and L.~Spice, \emph{A twisted Yu construction, Harish-Chandra characters, and endoscopy}, Duke Math. J. 172 (12), 2241--2301.


\bibitem{Ger}
P.~G\'{e}rardin, \emph{Construction de s\'{e}ries discr\`{e}tes $p$-adiques}, Lecture Notes in Mathematics, Vol. 462. Springer-Verlag, Berlin-New York, 1975.


\bibitem{Howe}
R.~Howe, \emph{Tamely ramified supercuspidal representations of $\GL_n$}, Pacific J. Math. 73 (1977), 437--460.

\bibitem{HM}
J.~Hakim and F.~Murnaghan, \emph{Distinguished tame supercuspidal representations}, Int. Math. Res. Pap. IMRP (2008), 166 pp.


\bibitem{I23a}
A.~Ivanov, \emph{Arc-descent for the perfect loop functor and $p$-adic Deligne-Lusztig spaces}, J. Reine Angew. Math. 794 (2023), 1--54.


\bibitem{I23b}
A.~Ivanov, \emph{On a decomposition of $p$-adic Coxeter orbits}, \'{E}pijournal G\'{e}om. Alg\'{e}brique 7 (2023), 41 pp.


\bibitem{IN24a}
A.~Ivanov, S.~Nie, \emph{The cohomology of $p$-adic Deligne-Lusztig shcemes of Coxeter type}, arXiv:2402.09017.



\bibitem{IN24b} A.~Ivanov, S.~Nie, \emph{On higher Deligne-Lusztig varieties}, in preparation.


\bibitem{INT}
A.~Ivanov, S.~Nie and P.~Tan, \emph{Inner products of deep level Deligne--Lusztig representations of Coxeter type}, arXiv:2407.18694.


\bibitem{Kal}
T.~Kaletha, \emph{Regular supercuspidal representations}, J. Amer. Math. Soc. 32 (2019), 1071--1170.

\bibitem{L79}
G.~Lusztig, \emph{Some remarks on the supercuspidal representations of $p$-adic semisimple groups}, Proc. Sympos. Pure Math. 1979, 171--175.

\bibitem{L04}
G.~Lusztig, \emph{Representations of reductive groups over finite rings}, Represent. Theory 8 (2004), 1--14.


\bibitem{Sta09}
A.~Stasinski, \emph{Unramified representations of reductive groups over finite rings}, Represent. Theory 13 (2009), 636--656.

\bibitem{Kim}
J.-L.~Kim, \emph{Supercuspidal representations: an exhaustion theorem}, J. Amer. Math. Soc. 20 (2007), 273--320.


\bibitem{Yu}
J.-K.~Yu, \emph{Construction of tame supercuspidal representations}, J. Amer. Math. Soc. 14 (2001), 579--622.


\end{thebibliography}
\end{document}